\newcommand{\Z}{\mathbf{Z}}
\newcommand{\R}{\mathbf{R}}
\newcommand{\C}{\mathbf{C}}
\newcommand{\sig}{\sigma}
\newcommand{{\ba}}{\bf a}
\newcommand{\ve}{\varepsilon}
\newcommand{\la}{\lambda}
\newcommand{\La}{\Lambda}
\newcommand{\ga}{\gamma}
\newcommand{\pa}{\partial}
\newcommand{\ra}{\rightarrow}
\newcommand{\Om}{\Omega}
\newcommand{\del}{\delta}
\newcommand{\Del}{\Delta}
\newcommand{\na}{\nabla}
\newcommand{\al}{\alpha}
\newcommand{\be}{\begin{equation}}
\newcommand{\ee}{\end{equation}}
\newcommand{\om}{\omega}
\newtheorem{lem}{Lemma}{\bf}{\it}
\newtheorem{remark}{Remark}{\it}{\rm}
\newtheorem{theorem}{Theorem}
\newtheorem{proposition}{Proposition}
\newtheorem{hypothesis}{Hypothesis}
\numberwithin{theorem}{section}
\numberwithin{lem}{section}
\numberwithin{hypothesis}{section}
\numberwithin{equation}{section}
\numberwithin{proposition}{section}
\numberwithin{corollary}{section}
\title[Strong Convergence II] {Strong Convergence to the homogenized limit of parabolic equations with random coefficients II}
\author{Joseph G. Conlon \  and \  Arash Fahim}
\address{ (Joseph G. Conlon): University of Michigan, Department of Mathematics, Ann Arbor,
  MI 48109-1109}
\email{conlon@umich.edu}
\address{ (Arash Fahim): University of Michigan, Department of Mathematics, Ann Arbor,
  MI 48109-1109}
\email{fahimara@umich.edu}
\keywords{Euclidean field theory, pde with random coefficients, homogenization}
\subjclass{81T08, 82B20, 35R60, 60J75}
\begin{document}

\maketitle

\begin{abstract}
This paper is concerned with the study of solutions to discrete parabolic equations in divergence form with random coefficients, and their convergence to solutions of a homogenized equation. 
 In \cite{cf1}  rate of convergence results in homogenization and estimates on the difference between the averaged Green's function and the homogenized Green's function for random environments which satisfy a Poincar\'{e} inequality were obtained. Here these results are extended to certain environments  in which correlations can have arbitrarily small power law decay. Similar results  for discrete elliptic equations  were obtained in \cite{cf2}.

\end{abstract}

\section{Introduction.}
In this paper we continue the study of solutions to divergence form parabolic equations with random coefficients begun in \cite{cf1}.  In \cite{cf1} we were concerned with solutions  $u(x,t,\om)$ to the 
equation
\be \label{A1}
\frac{\pa u(x,t,\om)}{\pa t} \ = \ -\nabla^*{\bf a}(\tau_{x,t}\om)\nabla u(x,t,\om) \ , \quad x\in \Z^d, \ t\ge0, \ \om\in\Om,
\ee
with initial data
\be \label{B1}
u(x,0,\om) \ = \ h(x), \quad x\in\Z^d, \ \om\in\Om \ .
\ee
Here $\Z^d$ is the $d$ dimensional integer lattice and $(\Om,\mathcal{F},P)$ is a probability space equipped with  measure preserving  translation operators  $\tau_{x,t}:\Om\ra\Om, \ x\in\Z^d,t\in\R$. 
In (\ref{A1}) we take $\nabla$ to be the discrete gradient operator defined by 
\be \label{C1}
\na \phi(x)  \ = \  \big( \na_1 \phi(x),... \ \na_d\phi(x) \big), \quad  \na_i \phi(x) = \phi (x + {\bf e}_i) - \phi(x),  
\ee
 where the vector  ${\bf e}_i \in \Z^d$ has 1 as the ith coordinate and 0 for the other coordinates, $1\le i \le  d$.  Then  $\nabla$ is a $d$ dimensional { \it column} operator, with adjoint $\nabla^*$ which is a $d$ dimensional {\it row} operator.
The function  ${\bf a}:\Om\ra\R^{d(d+1)/2}$  from $\Om$ to the space of symmetric $d\times d$ matrices  satisfies the quadratic form inequality  
\begin{equation} \label{D1}
\la I_d \le {\bf a}(\om) \le \La I_d, \ \ \ \ \ \om\in\Om,
\end{equation}
where $I_d$ is the identity matrix in $d$ dimensions and $\La, \la$
are positive constants.

One expects that if the translation operators $\tau_{x,t}$ are ergodic  on $\Om$ then solutions to the random equation (\ref{A1})  converge under diffusive scaling to solutions of a constant coefficient homogenized equation. Thus suppose $f:\R^d\ra\R$ is a $C^\infty$ function with compact support and for $\ve$ satisfying $0<\ve\le 1$ set $h(x)=f(\ve x), \ x\in\Z^d$, in (\ref{B1}), and let  $u_\ve(x,t,\om)$ denote the corresponding solution to (\ref{A1})  with this initial data.  It has been shown in \cite{loy}, just assuming  ergodicity of the translation operators,  that $u_\ve(x/\ve,t/\ve^2,\om)$ converges in probability as $\ve\ra 0$ to a function $u_{\rm hom}(x,t), \ x\in\R^d, \ t>0$, which is the solution to a constant coefficient parabolic PDE
\be \label{E1}
\frac{\pa u_{\rm hom}(x,t)}{\pa t} \ = \ -\na^* {\bf a}_{\rm hom}\na u_{\rm hom}(x,t) \ , \quad x\in\R^d, \ t>0,
\ee
with initial condition
\be \label{F1}
u_{\rm hom}(x,0) \ = \ f(x), \quad x\in\R^d \ .
\ee
The $d\times d$ symmetric matrix ${\bf a}_{\rm hom}$ in (\ref{E1}) satisfies the quadratic form inequality (\ref{D1}).   Similar results under various ergodic type assumptions on $\Om$ can be found in  \cite{bmp,cps, dkl,r}. In time-independent environments the corresponding results for elliptic equations in divergence form have been proven much earlier  -see \cite{k1,k2,pv,zko}.

In \cite{cf1} we were concerned with obtaining a {\it rate of convergence} for the homogenized limit, $\lim_{\ve\ra 0} u_\ve(x/\ve,t/\ve^2,\om)=u_{\rm hom}(x,t)$. The corresponding problem for elliptic equations has been extensively studied, beginning with the seminal work of Yurinskii \cite{y}. Recent papers on the subject have addressed the issue of obtaining optimal rates of convergence \cite{go1,go2,mo}, and include results for fully non-linear elliptic equations \cite{cafs}. Optimal estimates on variances  of solutions have been obtained,  but precise  results on fluctuations analogous to the central limit theorem have been proven only in the case of one dimension \cite{bal}. In all these papers one must make a {\it quantitative strong mixing}  assumption on the environment $(\Om,\mathcal{F},P)$ in order to obtain a rate of convergence in homogenization. For the parabolic problem we were unable to find in the literature any results on rate of convergence in homogenization, except for the recent preprint \cite{mourrat} (see also \cite{gno}) in which the environment is fixed in time. In \cite{cf1} as in \cite{mourrat} our results are restricted to obtaining a rate of convergence for the mean  $\langle u_\ve(x/\ve,t/\ve^2,\cdot)\rangle$ of the  solution  of (\ref{A1}) to  $u_{\rm hom}(x,t)$.  We were able to show that, for certain environments  $(\Om,\mathcal{F},P)$ satisfying a quantitative strong mixing condition, there exists $\al>0$ depending only on $d,\La/\la$ such that
\be \label{G1}
\sup_{x\in\ve\Z^d, t>0}| \langle \ u_\ve(x/\ve,t/\ve^2,\cdot) \ \rangle - u_{\rm hom}(x,t)| \ \le \ C\ve^\al \quad {\rm for \ }  0<\ve\le 1. 
\ee

In \cite{cf1} we followed the approach of  Naddaf and Spencer \cite{ns2} to the problem of obtaining rates of convergence in homogenization by formulating the quantitative strong mixing assumption on the environment as a   Poincar\'{e} inequality.  Specifically, consider a measure space $(\tilde{\Om},\tilde{\mathcal{F}})$ of time dependent vector fields $\tilde{\om}:\Z^d\times\R\ra\R^k$ with the property that the functions $t\ra\tilde{\om}(x,t), \ t\in\R,$ are continuous for all $x\in\Z^d$ and each  $\tilde{\om}(x,t):\tilde{\Om}\ra\R^k$ is Borel measurable with respect to the $\sig$-algebra $\tilde{\mathcal{F}}$.  For a function $G:\tilde{\Om}\ra\R$ the gradient of $G$ is defined in a weak sense. Thus if $h:\Z^d\times \R\ra\R^k$ is continuous with compact support  the directional derivative $d_hG(\tilde{\om})$ of $G(\tilde{\om})$ in the direction $h$ is defined as the limit
 \be \label{H1}
 d_hG(\tilde{\om}) \ = \ \lim_{\del\ra 0} [G(\tilde{\om}+\del h)-G(\tilde{\om})]/\del \ .
 \ee
 The function $d_{\tilde{\om}}G(\tilde{\om}):\Z^d\times\R\ra\R^k$ is then the gradient of $G$ at $\tilde{\om}$ if it is Borel measurable and
   \be \label{I1}
   d_hG(\tilde{\om}) \ = \ \sum_{x\in \Z^d} \int_{-\infty}^\infty dt \ d_{\tilde{\om}}G(x,t;\tilde{\om})\cdot h(x,t) =[d_{\tilde{\om}}G(\tilde{\om}), h] \ 
  \ee
for all continuous $h:\Z^d\times \R\ra\R^k$ of compact support. In (\ref{I1}) we have denoted by $[\cdot,\cdot]$ the Euclidean inner product on $L^2(\Z^d\times\R,\R^k)$.  Letting $\|\cdot\|_2$ denote the corresponding Euclidean norm, a probability measure $\tilde{P}$ on $(\tilde{\Om},\tilde{\mathcal{F}})$ is said to satisfy a Poincar\'{e} inequality if   there is a constant $K_{\tilde{P}}>0$ such that 
 \be \label{J1}
 {\rm Var}[ G(\cdot)] \ \le \ K_{\tilde{P}} \langle \ \|d_{\tilde{\om}} G(\cdot;\tilde{\om})\|_2^2 \ \rangle  \quad 
 {\rm for \  all \  } C^1  \ {\rm functions} \  G:\tilde{\Om}\ra\C.
 \ee
 
  If the translation invariant probability measure $\tilde{P}$ is Gaussian, then the measure is determined by the $2$-point correlation function $\Gamma:\Z^d\times\R\ra\R^k\otimes\R^k$ defined by
 $\Gamma(x,t)=\langle \ \tilde{\om}(x,t)\tilde{\om}(0,0)^* \ \rangle, \ x\in\Z^d,  t\in\R$,    where $\tilde{\om}(\cdot,\cdot)\in\R^k$ is assumed to be a column vector and the superscript $*$ denotes adjoint.  Defining the Fourier transform of a function $h:\Z^d\times\R\ra\C$ by
 \be \label{K1}
 \hat{h}(\zeta,\theta) \ = \ \sum_{x\in\Z^d}\int_{-\infty}^\infty dt \  h(x,t) e^{ix\cdot\zeta+it\theta} \ , \quad \zeta\in[-\pi,\pi]^d  , \  \theta\in\R \ ,
\ee 
one can easily see that the Poincar\'{e} inequality (\ref{J1}) holds if and only if $\hat{\Gamma}\in L^\infty([-\pi,\pi]^d\times\R)$.  Hence if $\Gamma$ is integrable on $\Z^d\times\R$ then (\ref{J1}) holds, but it is unlikely to hold if $\Gamma$  is not integrable.

In the present paper we shall prove rate of convergence results in homogenization of the parabolic PDE (\ref{A1}) for certain environments  that include some Gaussian environments  in which $\Gamma$ is not integrable.  To do this we extend the method introduced in \cite{cf2} for elliptic PDE in divergence form to the parabolic case.  The idea is to consider 
environments defined by $\mathbf{a}(\om)=\tilde{\mathbf{a}}(\om(0,0))$ where $\om:\Z^d\times\R\ra\R^n$ is a {\it translation invariant function} of $\tilde{\om}:\Z^d\times\R\ra\R^k$. The gradient of $\om$ with respect to $\tilde{\om}$ is assumed to satisfy a {\it uniform integrability condition},  and the probability space  $(\tilde{\Om},\tilde{\mathcal{F}},\tilde{P})$ for $\tilde{\om}$  to satisfy the Poincar\'{e} inequality  (\ref{J1}).  

We define what we mean by the terms used in the previous paragraph. Let $(\Om,\mathcal{F},P)$ be the probability space for $\om$ induced by the probability space  $(\tilde{\Om},\tilde{\mathcal{F}},\tilde{P})$ for $\tilde{\om}$  and the functional dependence $\tilde{\om}\ra\om$.  Translation operators $\tau_{x,t}, \ x\in\Z^d,t\in\R,$ on $\Om$ are defined by $\tau_{x,t}\om(z,s)=\om(x+z,t+s), \ z\in\Z^d,s\in\R,$ with a similar definition of translation on $\tilde{\Om}$.  The function $\tilde{\om}\ra\om$, which we denote by $\om(\cdot,\cdot,\tilde{\om})$ is translation invariant if
\be \label{L1}
\tau_{x,t} \om(\cdot,\cdot,\tilde{\om}) \ = \ \om(\cdot,\cdot,\tau_{x,t}\tilde{\om}) \quad {\rm for \ } x\in\Z^d,t\in\R, \tilde{\om}\in\tilde{\Om}.
\ee
Note that if $\om$ is a {\it linear} translation invariant function of $\tilde{\om}$ then $\om$ is the convolution of some function $h:\Z^d\times\R\ra \R^n\otimes\R^k$ from $\Z^d\times\R$ to $n\times k$ matrices  with $\tilde{\om}$, 
\be \label{M1}
\om(x,t,\tilde{\om}) \ = \ h*\tilde{\om}(x,t) \ =  \ \sum_{y\in\Z^d}\int_{-\infty}^\infty ds \  h(x-y,t-s)\tilde{\om}(y,s), \quad x\in\Z^d,t\in\R. 
\ee
For given $x\in\Z^d,t\in\R$ we use the notation of (\ref{I1}) to write the gradient of the function $\om(x,t,\cdot):\tilde{\Om}\ra\R^n$ as $d_{\tilde{\om}} \om(z,s;x,t,\tilde{\om}), \ z\in\Z^d,s\in\R,\tilde{\om}\in\tilde{\Om}$. The uniform integrability condition is then that
\be \label{N1}
 \sum_{x\in\Z^d}\int_{-\infty}^\infty dt \ \left[\sup_{\tilde{\om}\in\tilde{\Om}}\left|d_{\tilde{\om}} \om(0,0;x,t,\tilde{\om})\right| \ \right]^q \ \le \ (K_{\om,q})^q <\infty
\ee
for some $q$ with $1\le q<2$.  It follows from (\ref{M1}) that when $\om$ is a linear function of $\tilde{\om}$ the condition (\ref{N1}) is equivalent to the condition that the function $h$ in (\ref{M1})  is $q$ integrable. 

In \cite{cf1} we proved that  (\ref{G1}) holds in the case where $(\Om,\mathcal{F},P)$ is the stationary process associated with a massive Euclidean field theory. This Euclidean field theory is determined by a potential $V : \R^d \ra \R$ which is a $C^2$ uniformly convex function, and a mass $m>0$.  Thus the second derivative ${\bf a}(\cdot)=V''(\cdot)$ of $V(\cdot)$ is assumed to satisfy the inequality (\ref{D1}). Consider functions 
$\phi : \Z^d\times\R \ra \R$ which we denote as $\phi(x,t)$ where $x$ lies on the integer lattice $\Z^d$ and $t$ on the real line $\R$.  Let $\Om$ be the space of
all such functions which have the property that for each $x\in\Z^d$ the function $t\ra\phi(x,t)$ on $\R$ is continuous, and $\mathcal{F}$ be the Borel algebra generated by finite 
dimensional rectangles 
$\{ \phi(\cdot,\cdot) \in \Om: \  |\phi(x_i,t_i) - a_i| < r_i, \ i=1,...,N\}$, where
$(x_i,t_i) \in \Z^d\times\R, \ a_i \in \R, \ r_i > 0, \ i=1,...,N, \ N \ge 1$.  
For any $d\ge 1$ and $m>0$ one can define \cite{c1,fs} a unique ergodic translation invariant probability 
measure $P_m$ on $(\Om, \mathcal{F})$ which depends on the function $V$ and $m$.  In this measure the variables $\phi(x,t), \ x\in\Z^d,t>0,$ conditioned on the variables $\phi(x,0), \ x\in\Z^d,$ are determined as solutions of the infinite dimensional stochastic differential  equation 
\be \label{O1}
d\phi(x,t) \ = \ -\frac{\pa}{\pa\phi(x,t)}\sum_{x'\in\Z^d} \frac{1}{2}\{V(\na\phi(x',t))+m^2\phi(x',t)^2/2\} \ dt +dB(x,t) \ , \quad x\in\Z^d, t>0, 
\ee
 where $B(x,\cdot), \ x\in\Z^d,$ are independent copies of Brownian motion. Formally the invariant measure for the Markov process (\ref{O1}) is the Euclidean field theory measure
\be \label{P1}
\exp \left[ - \sum_{x\in \Z^d} V\left( \na\phi(x)\right)+m^2\phi(x)^2/2 \right] \prod_{x\in \Z^d} d\phi(x)/{\rm normalization}.
\ee
Hence if the variables $\phi(x,0), \ x\in\Z^d,$ have distribution determined by (\ref{P1}), then $\phi(\cdot,t), \ t>0,$ is a stationary process and so can be extended to all $t\in\R$ to yield a measure $P_m$ on $(\Om,\mathcal{F})$.   The probability space $(\Om,\mathcal{F},P_m)$ satisfies the Poincar\'{e} inequality (\ref{J1}) with constant $K_{P_m}=4/m^4$.  In \cite{cf1} we conclude from this that the inequality (\ref{G1})  holds provided $\mathbf{a}(\om)=\tilde{\mathbf{a}}(\phi(0,0))$, where $\tilde{\mathbf{a}}:\R\ra\R^{d(d+1)/2}$ is assumed to be a $C^1$ function satisfying (\ref{D1}) and 
$\|D\tilde{\mathbf{a}}(\cdot)\|_\infty<\infty$. 

Let $(\tilde{\Om},\tilde{\mathcal{F}},\tilde{P})$ be a probability space for which the Poincar\'{e} inequality (\ref{J1})  holds, and $\tilde{\om}\ra\om$ a function which satisfies the translation invariant condition (\ref{L1}) and the uniform integrability condition (\ref{N1}) for some $q$ with $1\le q<2$.  Our goal in the current paper is to show that the inequality (\ref{G1}) holds for the environment $(\Om, \mathcal{F},P)$  of $\om\in\Om$
where   $\mathbf{a}(\om)=\tilde{\mathbf{a}}(\om(0,0))$ and $\tilde{\mathbf{a}}:\R\ra\R^{d(d+1)/2}$ is a $C^1$ function satisfying (\ref{D1}) and $\|D\tilde{\mathbf{a}}(\cdot)\|_\infty<\infty$.  Rather than attempt to formulate a general theorem for such environments, we shall only rigorously prove that (\ref{G1}) holds for certain limits of the probability spaces $(\Om,\mathcal{F},P_m)$ defined by (\ref{O1}), (\ref{P1}) as $m\ra 0$.  In $\S2$ we indicate the generality of our argument by showing that the proof of Proposition 6.3 of \cite{cf1} formally extends to the environment $(\Om, \mathcal{F},P)$.

From (\ref{O1}) we see that the stationary process $\om(\cdot,\cdot)=\phi(\cdot,\cdot)$ is a translation invariant function of the white noise stationary process $\tilde{\om}(\cdot,\cdot)=dB(\cdot,\cdot)$.  It is well known that the white noise process satisfies a Poincar\'{e} inequality (\ref{J1}) with $K_{\tilde{P}}=1$. Consider now the terminal value problem for the backwards in time parabolic PDE
\begin{eqnarray} \label{Q1}
\frac{\pa u(z,s)}{\pa s}  \ &=&  \ \frac{1}{2}\na^*V''(\na\phi(z,s))\na u(z,s), \quad   s<t,z\in\Z^d, \\
 u(z,t) \ &=& \ u_0(z),  \quad  z\in\Z^d, \nonumber
\end{eqnarray}
with solution
\be \label{R1}
u(z,s) \ = \ \sum_{x\in \Z^d} G(z,s;x,t,\phi) u_0(x) \ , \quad s<t,z\in\Z^d.
\ee
 We see from  \cite{cf1} that the gradient of $\om(x,t)=\phi(x,t), \ x\in\Z^d,t\in\R,$ with respect to $\tilde{\om}$ should be  given by the formula
\begin{eqnarray} \label{T1}
d_{\tilde{\om}}\om(z,s;x,t,\tilde{\om}) \ &=& \  e^{-m^2(t-s)/2}G(z,s;x,t,\phi) \quad {\rm for \ } s<t, \\
d_{\tilde{\om}}\om(z,s;x,t,\tilde{\om}) \ &=& \  0 \quad {\rm for \ } s>t, \quad z\in\Z^d. \nonumber
\end{eqnarray}
In \cite{gos} a discrete version of the  Aronson  inequality \cite{aronson}  was  proven in the case when  the diffusion matrix $V''(\cdot)$ for (\ref{Q1}) is {\it diagonal}. In particular it was shown that there is a positive constant $C$ depending only on $d,\La/\la$ such that
\be \label{U1}
0< \ G(z,s;x,t,\phi) \ \le \ \frac{C}{[\La(t-s)+1]^{d/2}} \exp\left[-\frac{ |x-z|}{\sqrt{\La (t-s)+1}} \ \right] \ .
\ee
Hence (\ref{T1}), (\ref{U1}) imply that the uniform integrability condition (\ref{N1}) holds  for any $q$ with $q>1+2/d$ and  the bound on the RHS of (\ref{N1}) can be taken independent of $m$ as $m\ra 0$.  Hence if $d\ge 3$ the condition (\ref{N1}) holds  in the limit $m\ra 0$ for some $q$ with $1\le q<2$. 

It has been shown by Funaki and Spohn \cite{fs} (see also \cite{c1}) that if $d\ge 3$ then there is a unique limit as $m\ra 0$ of the stationary process defined by (\ref{O1}), (\ref{P1}). In $\S3$ we shall extend the rate of convergence results in homogenization of (\ref{A1}), (\ref{B1}) obtained in \cite{cf1} for the massive field stationary process (\ref{O1}), (\ref{P1})  with $m>0$   to this massless $m\ra 0$ stationary process. In particular we prove the following:
\begin{theorem}
 Let $V:\R^d\ra\R$ be a $C^2$ function such that $V''(z), \ z\in\R^d,$ is a diagonal $d\times d$ matrix which   satisfies the quadratic form inequality  (\ref{D1}) with $\mathbf{a}(\cdot)=V''(\cdot)$.  Let $\tilde{{\bf a}}:\R\ra\R^{d(d+1)/2}$  be a $C^1$ function   on $\R$ with values in the space of symmetric $d\times d$ matrices which satisfies the quadratic form inequality  (\ref{D1}) and has bounded first derivative $D\tilde{{\bf a}}(\cdot)$ so  $\|D\tilde{{\bf a}}(\cdot)\|_{\infty}<\infty$.  For $d\ge 3$ let $(\Om, \mathcal{F}, P)$ be the probability space of the stationary process $\phi(\cdot,\cdot)$ determined by the limit as $m\ra 0$ of the stationary process defined by (\ref{O1}), (\ref{P1}), and set ${\bf a}(\cdot)$ in (\ref{A1}) to be ${\bf a}(\phi)=\tilde{{\bf a}}(\phi(0,0)), \ \phi\in\Om$. 
 Let  $f:\R^d\ra\R$ be a $C^\infty$ function of compact support,  $u_\ve(x,t,\om)$  the corresponding solution to (\ref{A1}), (\ref{B1}) with $h(x)=f(\ve x), \ x\in\Z^d,$ and $u_{\rm hom}(x,t), \ x\in\R^d,t>0,$ the solution to (\ref{E1}), (\ref{F1}).   Then  there is a constant $\al>0$ depending only on $d,\La/\la$ and a constant $C$ depending only on $d,\La,\la, \|D\tilde{{\bf a}}(\cdot)\|_{\infty}, f(\cdot)$ such that (\ref{G1}) holds. 
 \end{theorem}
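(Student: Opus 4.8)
The plan is to obtain Theorem~1.1 as the $m\to0$ limit of the corresponding estimate (\ref{G1}) already established in \cite{cf1} for the massive stationary process $P_m$. The key structural facts in hand are: (i) each $(\Om,\mathcal{F},P_m)$ satisfies the Poincar\'e inequality (\ref{J1}) with constant $K_{P_m}=4/m^4$, so the results of \cite{cf1} apply and the homogenized matrix ${\bf a}_{\rm hom}^{(m)}$ is defined; (ii) via (\ref{T1}) and the discrete Aronson bound (\ref{U1}) of \cite{gos}, the function $\phi(\cdot,\cdot)$ is a translation invariant function of the white noise $\tilde{\om}=dB$ whose gradient satisfies the uniform integrability condition (\ref{N1}) with a constant $K_{\om,q}$ that is \emph{bounded independently of $m$} for any $q>1+2/d$; in particular, since $d\ge3$, we may fix such a $q$ with $1\le q<2$. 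The strategy is therefore to rerun the argument of \cite{cf1} (as sketched to carry over in \S2, where Proposition~6.3 of \cite{cf1} is shown to extend to environments of the type described) using, \emph{in place of} the Poincar\'e constant $K_{P_m}$ which blows up, only the $m$-uniform quantity $K_{\om,q}$ together with $K_{\tilde{P}}=1$ for the white noise. This yields a bound
\be
\sup_{x\in\ve\Z^d,\,t>0}\big|\langle u_\ve^{(m)}(x/\ve,t/\ve^2,\cdot)\rangle - u_{\rm hom}^{(m)}(x,t)\big| \ \le \ C\ve^\al,
\ee
with $\al>0$ depending only on $d,\La/\la$ and $C$ depending only on $d,\La,\la,\|D\tilde{\bf a}\|_\infty,f$, \emph{all four quantities independent of $m$}.

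The next step is to pass to the limit $m\to0$. By Funaki--Spohn \cite{fs} (see also \cite{c1}), for $d\ge3$ the stationary processes defined by (\ref{O1}), (\ref{P1}) converge as $m\to0$ to a unique limiting stationary process $\phi(\cdot,\cdot)$ on $(\Om,\mathcal{F},P)$; this convergence is in distribution on the cylinder $\sig$-algebra $\mathcal{F}$, hence, since $\tilde{\bf a}$ is continuous and bounded, ${\bf a}^{(m)}(\phi)=\tilde{\bf a}(\phi(0,0))$ converges appropriately and the associated quantities converge. Concretely I would show: first, that ${\bf a}_{\rm hom}^{(m)}\to{\bf a}_{\rm hom}$ as $m\to0$, so that $u_{\rm hom}^{(m)}\to u_{\rm hom}$ uniformly on $x\in\R^d,\,t>0$ (the constant-coefficient heat semigroups depend continuously on the coefficient matrix, uniformly because all the matrices obey (\ref{D1})); and second, that for each fixed $\ve$, $x$, $t$ the averaged solution $\langle u_\ve^{(m)}(x/\ve,t/\ve^2,\cdot)\rangle$ converges to $\langle u_\ve(x/\ve,t/\ve^2,\cdot)\rangle$ as $m\to0$. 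Combining these two convergences with the $m$-uniform bound above and taking $m\to0$ gives (\ref{G1}) for the massless process, which is the assertion of Theorem~1.1.

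I expect the main obstacle to be the second convergence just mentioned, namely justifying that $\langle u_\ve^{(m)}\rangle\to\langle u_\ve\rangle$ as $m\to0$ with enough uniformity in $(x,t)$ to preserve the supremum. The solution $u_\ve(\cdot,\cdot,\om)$ of the random parabolic equation (\ref{A1}) is a nonlocal functional of the environment $\om=\phi$, so weak convergence of $P_m$ to $P$ on finite-dimensional cylinders does not immediately transfer to convergence of $\langle u_\ve^{(m)}\rangle$; one needs either a uniform-in-$m$ moment/continuity estimate on $u_\ve$ in terms of finitely many coordinates of $\phi$ (truncation in space–time plus an Aronson-type decay estimate for the random Green's function, which again follows from \cite{gos}, \cite{aronson}), or a direct coupling of the processes $\phi^{(m)}$ and $\phi$ through the common driving Brownian motions $B(x,\cdot)$ in (\ref{O1}) so that $u_\ve^{(m)}-u_\ve$ can be estimated pathwise via a Duhamel/energy argument and then averaged. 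Either route is technical but standard; the uniform integrability condition (\ref{N1}) and the $m$-independence of its constant are precisely what make the error estimates of \cite{cf1}, and hence the limiting bound, robust as $m\to0$. The remaining steps — the continuity of ${\bf a}_{\rm hom}^{(m)}$ in $m$ and of the homogenized semigroup — are comparatively routine, using compactness of the coefficient range under (\ref{D1}) and the variational characterization of ${\bf a}_{\rm hom}$.
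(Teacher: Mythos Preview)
Your overall strategy---obtain $m$-uniform estimates for the massive field and pass to the limit---is sound and is indeed what the paper does, but you place the $m\to 0$ limit at the wrong level and misidentify where the real difficulty lies.

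The paper does not pass to the limit in (\ref{G1}). Instead, via the Fourier representation (\ref{D3}) and results from \cite{cf1}, it reduces Theorems 1.1 and 1.2 to a H\"older continuity estimate on the Fourier multiplier $q(\xi,\eta)$ (Hypothesis 3.1), which is in turn reduced to an operator-norm bound $\|T_{k,\Im\xi,\Re\eta}\|_{p,\infty}$ (Lemma 3.1). The $m\to 0$ limit is taken \emph{inside} the proof of Lemma 3.1: the quantity $\langle|T_{k,\Im\xi,\Re\eta}g(\Re\xi,\Im\eta,\cdot)v|^2\rangle_{\Om_m}$ is the expectation of a bounded function of finitely many field values (see (\ref{AC4}), (\ref{AD*4})), so the Funaki--Spohn convergence (\ref{BH4}) applies directly. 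This sidesteps entirely the issue you flag as the ``main obstacle''---convergence of the nonlocal functional $\langle u_\ve^{(m)}\rangle$.

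Conversely, your step 1 (the ``rerun'' yielding $m$-uniform (\ref{G1})) is not the routine modification you suggest; it is precisely the content of \S4. The argument of \S2 (Proposition 2.1) is explicitly described in the introduction as \emph{formal}, because white noise $dB$ is not a field $\tilde\omega:\Z^d\times\R\to\R^k$ in the sense required there---one needs Malliavin calculus to make the chain rule (\ref{G2}) and the gradient formula (\ref{T1}) rigorous. The paper carries this out on periodic cubes $Q$ (Lemma 4.2), rewrites the resulting Malliavin bound via an identity (Lemma 4.3) into a form amenable to the parabolic Meyers theorem (Lemma 4.4), and then takes $Q\to\Z^d$ followed by $m\to 0$. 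So the genuine obstacle is your step 1, not your step 2; once Lemma 3.1 is established directly for the massless environment, your limiting argument for $\langle u_\ve^{(m)}\rangle$ and ${\bf a}_{\rm hom}^{(m)}$ becomes unnecessary.
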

 \begin{remark}
 The exponent $\al>0$ in (\ref{G1}) can be taken equal to $1$ if $d\ge 5$  and the ratio $\la/\La$ is sufficiently close to $1$.  In \cite{cf1} the matrix $V''(\cdot)$ is not required to be diagonal since we use the fact that the Poincar\'{e} inequality (\ref{J1}) holds for the massive field stationary process.  In the  Gaussian case where $V(\cdot)$ is quadratic (\ref{G1}) also holds without the restriction that $V''(\cdot)$ be diagonal.  This follows from the fact that a bound  on the Green's function defined by (\ref{R1}) similar to (\ref{U1}) holds in this case. Another way of seeing it is  to note that the field $\phi(\cdot,\cdot)$ is  a linear translation invariant function of another field $\tilde{\om}(\cdot,\cdot)$ as in (\ref{M1}) with probability space which does satisfy a Poincar\'{e} inequality. This property of $\phi(\cdot,\cdot)$, being the convolution of a function with another field whose probability space  satisfies a Poincar\'{e} inequality, does not seem to generalize to the case of uniformly convex $V(\cdot)$  which is not quadratic.  One reason for this is that the measure for the stationary process $\phi(\cdot,\cdot)$ associated with (\ref{O1}), (\ref{P1}) appears to be log concave in $\phi(\cdot,\cdot)$ only in the Gaussian case when $V(\cdot)$ is quadratic (see Appendix).  In contrast,   the  invariant measure (\ref{P1}) for the stationary process is easily seen to be log concave  when $V(\cdot)$ is convex. Hence the Brascamp-Lieb inequality \cite{bl} implies that  a Poincar\'{e} inequality holds for the gradient $\na\phi(\cdot)$  of the invariant measure field $\phi(\cdot)$ of (\ref{P1}) if $V(\cdot)$ is uniformly convex (see \cite{cs}) . 
 \end{remark}
 Parallel to \cite{cf1} we also establish for the massless field stationary process  {\it point-wise} convergence at large length scales of the averaged Green's function for the initial value problem (\ref{A1}), (\ref{B1}) to the homogenized Green's function for the initial value problem (\ref{E1}), (\ref{F1}).  The averaged Green's function $G_{\mathbf{a}}(x,t), \ x\in\Z^d,t\ge 0,$ for (\ref{A1}), (\ref{B1}) is defined by $G_{\mathbf{a}}(x,t)=\langle \ u(x,t,\cdot) \ \rangle$, where $h(\cdot)$ in (\ref{B1}) is the Kronecker delta function $h(x)=0$ if $x\ne 0$ and $h(0)=1$. 
\begin{theorem}  With the same environment as in the statement of Theorem 1.1,  let $G_{{\bf a}_{\rm hom}}(x,t), \ x\in\R^d, t>0,$ be the Green's function for the homogenized problem (\ref{E1}), (\ref{F1}). Then there are constants $\al,\ga>0$ depending only on $d$ and the ratio $\La/\la$ of the constants $\la,\La$ of (\ref{D1}), and a constant $C$ depending only on $\|D\tilde{{\bf a}}(\cdot)\|_{\infty},\La,\la,d$ such that  for $\La t\ge 1$,
\be \label{V1}
\big| G_{{\bf a}}(x,t)-G_{{\bf a}_{\rm hom}}(x,t)\big| \ \le \ \frac{C}{[\La t+1]^{(d+\alpha)/2}} \exp\left[-\ga\min\left\{|x|, \ \frac{ |x|^2}{\La t+1}\right\}\right] \ ,
\ee
\be \label{W1}
\big| \na G_{{\bf a}}(x,t)-\na G_{{\bf a}_{\rm hom}}(x,t)\big| \ \le \ \frac{C}{[\La t+1]^{(d+1+\alpha)/2}} \exp\left[-\ga\min\left\{|x|, \ \frac{ |x|^2}{\La t+1}\right\}\right] \ , 
\ee
\be \label{X1}
\big|\na\na  G_{{\bf a}}(x,t)-\na\na G_{{\bf a}_{\rm hom}}(x,t)\big| \ \le \ \frac{C}{[\La t+1]^{(d+2+\alpha)/2}} \exp\left[-\ga\min\left\{|x|, \ \frac{ |x|^2}{\La t+1}\right\}\right] \ .  
\ee
\end{theorem}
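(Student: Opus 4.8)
The plan is to reduce the massless ($m\to0$) statements of Theorem 1.2 to the corresponding massive statements already established in \cite{cf1}, exploiting the uniform-in-$m$ bounds noted in the introduction. First I would record that for each $m>0$ the probability space $(\Om,\cF,P_m)$ defined by (\ref{O1}), (\ref{P1}) satisfies the Poincar\'e inequality (\ref{J1}) with $K_{P_m}=4/m^4$ and that $\mathbf{a}(\phi)=\tilde{\mathbf{a}}(\phi(0,0))$ defines a stationary ergodic environment satisfying (\ref{D1}); hence the results of \cite{cf1}, in particular the analogues of (\ref{V1})--(\ref{X1}) for the averaged Green's function $G^{(m)}_{\mathbf{a}}$, hold with constants $\al,\ga,C$ that a priori depend on $m$ through $K_{P_m}$. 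The crucial point, which I would isolate as a separate lemma, is that when one runs the argument of \cite{cf1} through the representation $\phi=h*dB$ implicit in (\ref{O1}) together with (\ref{T1}) and the Aronson bound (\ref{U1}), the only way the environment enters the estimates is via the uniform integrability constant $K_{\om,q}$ of (\ref{N1}) for some fixed $q\in(1+2/d,2)$, and by (\ref{T1})--(\ref{U1}) this constant is bounded independently of $m\le1$ for $d\ge3$. Therefore the constants $\al,\ga,C$ in the massive versions of (\ref{V1})--(\ref{X1}) can be taken independent of $m$; this is exactly the content that \S2 and \S3 are meant to supply via the extension of Proposition 6.3 of \cite{cf1}.

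Next I would pass to the limit $m\to0$. By Funaki--Spohn \cite{fs} (see also \cite{c1}), for $d\ge3$ the stationary processes $\phi^{(m)}(\cdot,\cdot)$ converge as $m\to0$ to the massless stationary process $\phi(\cdot,\cdot)$; I would make precise the mode of convergence needed, namely convergence of finite-dimensional distributions of $(\phi^{(m)}(x_i,t_i))$, which suffices because the quantities in (\ref{V1})--(\ref{X1}) are, after fixing the deterministic solution data, expectations of functionals that depend boundedly and continuously on finitely many coordinates of the environment (the Green's function $u(x,t,\cdot)$ solving (\ref{A1})--(\ref{B1}) with Kronecker-delta initial data depends on $\phi$ only through $\mathbf{a}(\tau_{y,s}\phi)=\tilde{\mathbf{a}}(\phi(y,s))$, and one has $L^\infty$ and exponential-decay control on these solutions from the Aronson-type bound uniformly in the environment). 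Thus $G^{(m)}_{\mathbf{a}}(x,t)\to G_{\mathbf{a}}(x,t)$ pointwise for each $x,t$, and similarly for the discrete gradients $\na G^{(m)}_{\mathbf{a}}$ and $\na\na G^{(m)}_{\mathbf{a}}$ since these are finite linear combinations of the Green's functions at neighbouring points. The homogenized matrix $\mathbf{a}^{(m)}_{\rm hom}$ also converges, to $\mathbf{a}_{\rm hom}$ for the massless environment, by the variational characterization of the homogenized coefficient and the same convergence of distributions; hence $G^{(m)}_{\mathbf{a}_{\rm hom}}\to G_{\mathbf{a}_{\rm hom}}$ together with its derivatives by standard continuity of the constant-coefficient heat kernel in the coefficient matrix. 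Passing to the limit in the $m$-uniform bounds of the previous paragraph then yields (\ref{V1})--(\ref{X1}) for the massless environment with the same $\al,\ga$ and a constant $C$ depending only on $\|D\tilde{\mathbf{a}}\|_\infty,\La,\la,d$.

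Finally I would clean up the derivative estimates. Because the constants in (\ref{V1})--(\ref{X1}) are uniform in $m$ but the convergence $\phi^{(m)}\to\phi$ is only in distribution, I would want the limiting quantities $G_{\mathbf{a}}, \na G_{\mathbf{a}}, \na\na G_{\mathbf{a}}$ to be genuinely well defined for the massless process; this follows since $\phi(y,s)$ is an almost surely finite random variable for each $(y,s)$ under the massless measure, so $\mathbf{a}(\tau_{y,s}\phi)$ makes sense and the discrete parabolic equation (\ref{A1}) has a unique solution in the class of functions with at most exponential growth, to which the Aronson bound applies. For the gradient and Hessian bounds (\ref{W1}), (\ref{X1}) I would use that discrete differentiation in $x$ commutes with the averaging $\langle\cdot\rangle$ and then invoke the already-established discrete-gradient versions of the massive estimates. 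The main obstacle I anticipate is the first step: verifying rigorously that the constants produced by the \cite{cf1} machinery depend on the environment \emph{only} through $K_{\om,q}$ (and $q$, $d$, $\La/\la$, $\|D\tilde{\mathbf{a}}\|_\infty$) and not through the Poincar\'e constant $K_{P_m}$ directly — this requires re-running the key estimates of \cite{cf1} (the analogue of Proposition 6.3 there) with the environment written as a function of the white noise via (\ref{M1})-type convolution structure and (\ref{T1}), which is precisely why \S2 of the present paper is devoted to showing that argument "formally extends." Making that extension fully rigorous, and checking that every constant along the way is $m$-independent, is where the real work lies.
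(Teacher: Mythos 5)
Your high-level strategy --- exploit the white-noise representation of the massive process together with the Aronson bound (\ref{U1}) to get estimates uniform in $m$, then send $m\to0$ using the Funaki--Spohn convergence --- is indeed the strategy of the paper. But your proposal differs structurally from what the paper does, and the differences hide genuine gaps.

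First, the paper does \emph{not} prove $m$-uniform versions of (\ref{V1})--(\ref{X1}) and then pass to the limit in those final estimates. It reduces Theorem 1.2 to Hypothesis 3.1 (H\"older continuity of $q(\xi,\eta)$ in a complex domain), proves Hypothesis 3.1 \emph{directly for the massless environment} via Lemma 3.1, and takes the $m\to0$ limit only at the level of the variance-type quantity $\langle|T_{k,\Im\xi,\Re\eta}\,g\,v|^2\rangle_{\Om_m}$ in (\ref{BI4}). The reason this matters is your justification for the limit passage: you assert that $G^{(m)}_{\mathbf{a}}(x,t)\to G_{\mathbf{a}}(x,t)$ because the solution of (\ref{A1}) "depends boundedly and continuously on finitely many coordinates of the environment." That is false --- $u(x,t,\phi)$ depends on $\phi(y,s)$ for all $y\in\Z^d$ and $0\le s\le t$, i.e.\ on infinitely many coordinates --- so convergence of finite-dimensional distributions does not directly give pointwise convergence of $G^{(m)}_{\mathbf{a}}$. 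One needs an approximation by cylinder functions with uniform control (the paper achieves this for the multilinear functionals $T_{k}\,g$ via the expansion (\ref{AC4}), the decay of $\nabla\nabla^*G$, and dominated convergence in (\ref{AK4})--(\ref{AL4})); you would need an analogous, and not obviously easier, argument for the full Green's function and for $\mathbf{a}^{(m)}_{\rm hom}\to\mathbf{a}_{\rm hom}$, which you also assert without proof --- the homogenized matrix is $\lim_{\eta\to0}q(0,\eta)$ and its stability under weak convergence of the environment is exactly the kind of statement the paper's series (\ref{C*3}) and Lemma 3.1 are designed to control.

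Second, and more fundamentally, your "crucial point" --- that the constants produced by the machinery of \cite{cf1} depend on the environment only through $K_{\om,q}$ and not through $K_{P_m}=4/m^4$ --- is not a matter of checking constants in an existing proof; it is the main new technical content of this paper. It requires the modified variance estimate of Proposition 2.1 (differentiating through the chain rule with respect to the underlying white noise, using the Malliavin derivative formula (\ref{H4}) with the damping factor $e^{-m^2(t-s)/2}$, the parabolic Meyer/Jones $L^q$ estimate, and Young's inequality with $p=2q/(3q-2)$), carried out first on finite periodic cubes with the limits $T\to\infty$ and $Q\to\Z^d$ taken before $m\to0$ (Lemmas 4.1--4.4). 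You correctly identify this as "where the real work lies," but as written your proposal defers essentially all of it, so it is an outline of a plan rather than a proof. To repair it along your own lines you would either have to reproduce Sections 2 and 4 of the paper, or supply the missing approximation arguments for the limit passage in the final estimates --- at which point the paper's route through Hypothesis 3.1 is the more economical one, since the objects whose $m\to0$ limits must be controlled there are multilinear in $\mathbf{b}$ and hence amenable to finite-dimensional approximation.
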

 The limit as $m\ra 0$ of the invariant measure (\ref{P1}) is a probability measure on gradient fields $\om:\Z^d\ra\R^d$, where formally $\om(x)=\na\phi(x), \ x\in\Z^d$. This massless field theory measure is ergodic with respect to translation operators \cite{c1,fs} for all $d\ge 1$. In the case $d=1$ it has a simple structure since  then the variables $\om(x), \ x\in\Z$, are i.i.d.  Note that in the probability space $(\Om,\mathcal{F},P)$ for the massless field theory,  the Borel algebra $\mathcal{F}$ is  generated by the intersection of finite dimensional rectangles and the hyperplanes imposing the gradient constraints for $\om(\cdot)$.  For $d\ge 3$ the gradient field theory measure induces a measure on fields $\phi:\Z^d\ra\R$ which is simply the limit of the measures (\ref{P1})  as $m\ra 0$.   For $d=1,2$ the $m\ra 0$ limit of the measures (\ref{P1}) on fields  $\phi:\Z^d\ra\R$ does not exist. Naddaf and Spencer showed in \cite{ns1} that the 2-point correlation function for the massless field can be represented via the Helffer-Sj\"{o}strand formula \cite{hs}  as the expectation value of a Green's function for a divergence form PDE with random coefficients. Using this fact and techniques of homogenization theory they were able to prove that averages of the function $x\ra   \langle \ \phi(x)\phi(0) \ \rangle$ over large length scales converge to the solution of a constant coefficient elliptic PDE
 \be \label{Y1}
\na^* {\bf a}_{\rm hom}\na u_{\rm hom}(x) \ = \  f(x), \quad x\in\R^d \ .
\ee
 Using the techniques of the current paper we obtain a convergence theorem for the correlation function $\langle \ \phi(x)\phi(0) \ \rangle$  which is {\it pointwise} in $x$: 
 \begin{theorem}
  Let $V:\R^d\ra\R$ be a $C^3$ function such that $V''(z), \ z\in\R^d,$ is a diagonal $d\times d$ matrix which   satisfies the quadratic form inequality  (\ref{D1}) with $\mathbf{a}(\cdot)=V''(\cdot)$ and also $\|V'''(\cdot)\|_\infty<\infty$.    Let $G_{{\bf a}_{\rm hom}}(x), \ x\in\R^d, $ be the Green's function for the Naddaf-Spencer PDE (\ref{Y1}), and $\langle\cdot\rangle$ denote the massless field theory measure for the $m\ra 0$ limit of (\ref{P1}). Then for $d\ge 2$ there is a constant $\al>0$ depending only on $d$ and the ratio $\La/\la$ of the constants $\la,\La$ of (\ref{B1}), and a constant $C$ depending only on $\|V'''(\cdot)\|_{\infty},\La,\la,d$ such that  for $x\in\Z^d-\{0\}$,
  \be \label{Z1}
 \big| \ \langle \ \phi(x)\phi(0) \ \rangle - G_{{\bf a}_{\rm hom}}(x) \ \big| \ \le  \ C/|x|^{d-2+\al} \ ,
  \ee
   \be \label{AA1}
 \big| \ \langle \ \na\phi(x)\phi(0) \ \rangle - \na G_{{\bf a}_{\rm hom}}(x) \ \big| \ \le  \ C/|x|^{d-1+\al} \ ,
  \ee
\be \label{AB1}
 \big| \ \langle \ \na\na^*\phi(x)\phi(0) \ \rangle - \na\na^* G_{{\bf a}_{\rm hom}}(x) \ \big| \ \le  \ C/|x|^{d+\al} \ .
  \ee 
   \end{theorem}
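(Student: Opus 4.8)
The plan is to deduce Theorem 1.3 from the parabolic estimates of Theorem 1.2 by exploiting the Helffer–Sj\"ostrand representation of the two-point function together with the relation between the elliptic Green's function $G_{\mathbf{a}_{\rm hom}}(x)$ and the time integral of the parabolic homogenized Green's function $G_{\mathbf{a}_{\rm hom}}(x,t)$. Concretely, the Naddaf–Spencer formula expresses $\langle\phi(x)\phi(0)\rangle$ as $\langle G_{\mathbf{a}}(x,0;0,t,\phi)\rangle$ integrated in $t$ over $(0,\infty)$ against the appropriate conditional measure, i.e. as $\int_0^\infty G_{\mathbf{a}}(x,t)\,dt$ where $G_{\mathbf{a}}(x,t)=\langle u(x,t,\cdot)\rangle$ is exactly the averaged parabolic Green's function from Theorem 1.2, and correspondingly $G_{\mathbf{a}_{\rm hom}}(x)=\int_0^\infty G_{\mathbf{a}_{\rm hom}}(x,t)\,dt$. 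Thus
\[
\langle\phi(x)\phi(0)\rangle - G_{\mathbf{a}_{\rm hom}}(x) \ = \ \int_0^\infty \big[\, G_{\mathbf{a}}(x,t) - G_{\mathbf{a}_{\rm hom}}(x,t)\,\big]\,dt,
\]
and the same identity holds with $\na$ and $\na\na^*$ applied in the $x$ variable on both sides.

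The next step is to split this time integral into a small-time piece $0<\La t<1$ and a large-time piece $\La t\ge 1$. On the large-time piece we insert the pointwise bound \eqref{V1} (respectively \eqref{W1}, \eqref{X1}) from Theorem 1.2; after integrating $[\La t+1]^{-(d+\al)/2}e^{-\ga\min\{|x|,|x|^2/(\La t+1)\}}$ in $t$, a standard computation (splitting the $t$-integral at $t\sim|x|^2/\La$, where the Gaussian and exponential regimes of the min cross over) yields a bound of order $|x|^{-d+2-\al}$, which is exactly the right-hand side of \eqref{Z1}; the gradient cases gain one and two extra powers of $|x|^{-1}$ respectively, matching \eqref{AA1} and \eqref{AB1}. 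On the small-time piece $0<\La t<1$ one cannot use Theorem 1.2, but here one uses instead the standard (non-quantitative) Aronson-type bounds \eqref{U1} on $G_{\mathbf{a}}$ and the analogous Gaussian bounds on $G_{\mathbf{a}_{\rm hom}}$ directly: for $t$ bounded, both Green's functions decay like $e^{-c|x|}$ for large $|x|$, so their difference integrated over a bounded time interval contributes at most $Ce^{-c|x|}$, which is dominated by the polynomial right-hand sides of \eqref{Z1}–\eqref{AB1}. For the derivative statements one uses the corresponding discrete gradient bounds on the heat kernels, which follow from \eqref{U1} together with discrete parabolic regularity.

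The main obstacle I anticipate is not the integration bookkeeping but the careful justification of the Helffer–Sj\"ostrand / Naddaf–Spencer identity in the present $m\to 0$ setting — that is, verifying that $\langle\phi(x)\phi(0)\rangle$ really equals $\int_0^\infty G_{\mathbf{a}}(x,t)\,dt$ with the same $\mathbf{a}(\phi)=V''(\na\phi)$ (so that $\mathbf{a}_{\rm hom}$ in \eqref{Y1} is the same homogenized matrix appearing in Theorem 1.2). This requires knowing that the massless gradient field measure exists and is ergodic for $d\ge 2$ (cited from \cite{c1,fs,ns1}), that the Helffer–Sj\"ostrand operator with random coefficients $V''(\na\phi)$ generates the relevant parabolic semigroup, and that the interchange of the $\langle\cdot\rangle$-expectation with the time integral is legitimate — the latter being where the integrability of the Aronson bound \eqref{U1} (finite for $d\ge 3$, and with the extra $\na$ also for $d=2$ in \eqref{AA1}, \eqref{AB1}) is essential, and explains the $d\ge 2$ restriction together with the appearance of the stronger hypothesis $\|V'''(\cdot)\|_\infty<\infty$ (needed to control the $\phi$-dependence of the coefficients when differentiating the semigroup, exactly as the $\|D\tilde{\mathbf{a}}\|_\infty$ bound was needed in Theorems 1.1–1.2). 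Once this identity and the uniformity in $m$ of \eqref{U1} are in hand, the theorem follows by the elementary integration argument sketched above; a final remark is that the exponent $\al$ in \eqref{Z1}–\eqref{AB1} is inherited directly from the $\al$ of Theorem 1.2, so no new loss of exponent occurs in this reduction.
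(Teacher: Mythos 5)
Your skeleton --- the Naddaf--Spencer/Helffer--Sj\"ostrand identity $\langle\phi(x)\phi(0)\rangle=\int_0^\infty e^{-m^2t}G_{\mathbf{a}}(x,t)\,dt$ (the paper keeps the factor $e^{-m^2t}$ and lets $m\ra 0$ only at the end), followed by insertion of the pointwise Green's function estimates (\ref{V1})--(\ref{X1}) and integration in $t$ --- is indeed the paper's strategy, and the identification of $\|V'''\|_\infty$ with the $\|D\tilde{\mathbf{a}}\|_\infty$ hypothesis is correct. But there is a genuine gap at the step where you ``insert the pointwise bound (\ref{V1})'': Theorem 1.2 does not apply to the environment at hand. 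In the Helffer--Sj\"ostrand representation the coefficient is $\mathbf{a}(\phi)=V''(\na\phi(0,0))$, a function of the \emph{discrete gradient} of the field, whereas Theorems 1.1 and 1.2 are proved for $\mathbf{a}(\phi)=\tilde{\mathbf{a}}(\phi(0,0))$ and only for $d\ge 3$ (indeed for $d=2$ the massless measure on $\phi$ itself does not even exist, only the gradient field measure does). So the estimates (\ref{V1})--(\ref{X1}) have to be re-proved for gradient-dependent coefficients, and this is where the real work and the new hypothesis $d\ge 2$ enter, not in the ``justification of the identity'' or in the interchange of expectation with the time integral, which you single out as the main obstacle (the identity is simply cited from \cite{gos}, \cite{c1}).

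Concretely, when the coefficients depend on $\na\phi(0,0)$ the chain rule in the variance estimate (Proposition 2.1 / Lemma 3.1) produces the spatial gradient $\na_x G(z,s;x,t,\phi)$ of the random-walk Green's function in place of $G(z,s;x,t,\phi)$ itself, and the uniform integrability condition (\ref{N1}) must be verified for this gradient. The plain Aronson bound (\ref{U1}) gives $\na_x G$ in $L^q(\Z^d\times\R)$ only for $q>1+2/d$, which for $d=2$ forces $q>2$ and is useless. The paper's proof therefore invokes the strengthened annealed bound (\ref{B5}) with an extra decay exponent $\beta>0$ depending on $\la/\La$, obtained from the Harnack inequality of Delmotte--Deuschel \cite{dd}; only with $\beta>0$ does one get (\ref{N1}) for some $q<2$ in $d=2$, and hence the analogues of (\ref{V1})--(\ref{X1}) for this environment. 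Your proposal contains no substitute for this ingredient, so as written it proves nothing for $d=2$ and, even for $d\ge 3$, silently assumes a version of Theorem 1.2 that has not been established. A secondary point: for $d=2$ the individual time integrals $\int_0^\infty G_{\mathbf{a}}(x,t)\,dt$ and $\int_0^\infty G_{\mathbf{a}_{\rm hom}}(x,t)\,dt$ diverge (cf.\ Remark 2), so the splitting into two separately estimated pieces must be done on the \emph{difference}, which your large-time/small-time decomposition does handle, but it is worth being explicit that convergence of the large-time integral in (\ref{Z1}) for $d=2$ relies on $\al>0$.
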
 
\begin{remark}
Note that both terms $\langle \ \phi(x)\phi(0) \ \rangle$ and $G_{{\bf a}_{\rm hom}}(x) $ on the LHS of (\ref{Z1}) are  divergent in dimension $d=2$. However the difference suitably defined is finite. The exponent $\al>0$ can be taken arbitrarily close to $1$ by choosing $\la/\La$ sufficiently close to $1$. 
\end{remark}
In the case when
$V(z) \ = \ |z|^2/2+ a \sum_{j=1}^d\cos z_j, \quad z\in\R^d$, the $m\ra 0$ probability measure (\ref{P1}) describes the dual representation of a gas of lattice dipoles with activity $a$ (see  \cite{brydges}).
The inequality (\ref{Z1}) was proven by Dimock and Hurd  (Theorem 2 of \cite{dh}) for a continuous space version of the dipole gas under the assumption that the activity $a$ is sufficiently small. It is not possible to obtain  from the renormalization group method they use a reasonable estimate on the value of $a$ for which their theorem holds. 
\vspace{.1in}

\section{Variance Estimate on the Solution to a PDE on $\Om$}
We recall some definitions from \cite{cf1}. For $\xi\in\R^d$  and $1\le j\le d$ we define the $\xi$ derivative of a measurable function  $\psi:\Om\ra\C$  in the $j$ direction by $\pa_{j,\xi}$, and its adjoint by  $\pa^*_{j,\xi}$, where
\begin{eqnarray} \label{A2}
\pa_{j,\xi} \psi(\om) \ &=& \  e^{-i{\bf e}_j.\xi}\psi(\tau_{{\bf e}_j,0}   \ \om)-\psi(\om),  \\
\pa^*_{j,\xi} \psi(\om) \ &=& \   e^{i{\bf e}_j.\xi}\psi(\tau_{-{\bf e}_j,0} \ \om)-\psi(\om). \nonumber
\end{eqnarray}
We also define a $d$ dimensional column $\xi$ gradient operator  $\pa_\xi$ by  $\pa_\xi=(\pa_{1,\xi},....,\pa_{d,\xi})$, which has adjoint  $\pa_\xi^*$  given by the row operator $\pa_\xi^*=(\pa^*_{1,\xi},....,\pa^*_{d,\xi})$. The time derivative of $\psi$ is defined by
\be \label{B2}
\pa \psi(\om) \ = \ \lim_{\del\ra 0}[\psi(\tau_{0,\del}\om)-\psi(\om)]/\del \ .
\ee 
Let $\mathcal{H}(\Om)$ be the Hilbert space of measurable functions $\Psi:\Om\ra\C^d$ with norm $\|\Psi\|_{\mathcal{H}(\Om)}$ given by $\|\Psi\|_{\mathcal{H}(\Om)}^2=\langle \ |\Psi(\cdot)|_2^2 \ \rangle$, where $|\cdot|_2$ is the Euclidean norm on $\C^d$. Then there is a unique row vector solution $\Phi(\xi,\eta,\om)=\big(\Phi_1(\xi,\eta,\om),...,\Phi_d(\xi,\eta,\om)\big)$ to the equation
\be \label{C2}
[\eta+\pa]\Phi(\xi,\eta,\om)+\pa_\xi^*{\bf a}(\om)\pa_\xi\Phi(\xi,\eta,\om)=-\pa^*_\xi {\bf a}(\om), \quad \eta>0, \ \xi\in \R^d, \ \om\in\Om,
\ee
such that $\Phi(\xi,\eta,\cdot)v\in L^2(\Om)$ for any $v\in\C^d$. Furthermore $\Phi(\xi,\eta,\cdot)v\in L^2(\Om)$  satisfies the inequality
\be \label{D2}
\eta\|\Phi(\xi,\eta,\cdot)v\|^2_{L^2(\Om)}+\la\|\pa_\xi\Phi(\xi,\eta,\cdot)v\|^2_{\mathcal{H}(\Om)} \ \le \ 
\La^2|v|^2/\la \ .
\ee
Letting $\mathcal{P}$ denote the projection orthogonal to the constant function, our generalization of Proposition 6.3 of \cite{cf1}  is as follows: 
\begin{proposition}
Suppose ${\bf a}(\cdot)$ in (\ref{C2}) is given by   $\mathbf{a}(\om)=\tilde{\mathbf{a}}(\om(0,0))$ where $\tilde{\mathbf{a}}:\R^n\ra\R^{d(d+1)/2}$ is a $C^1$ $d\times d$ symmetric matrix valued function  satisfying the quadratic form inequality (\ref{D1}) and $\|D\tilde{\mathbf{a}}(\cdot)\|_\infty<\infty$.  The random field $\om:\Z^d\times\R\ra\R^n$ is a translation invariant function of a random field $\tilde{\om}:\Z^d\times\R\ra\R^k$ which satisfies the uniform integrability condition (\ref{N1}) for some $q$ with $1\le q<2$.   The probability space $(\tilde{\Om},\tilde{\mathcal{F}},\tilde{P})$ of fields $\tilde{\om}:\Z^d\times\R\ra\R^k$  is assumed to satisfy the Poincar\'{e} inequality (\ref{J1}). Then there exists $q_0<2$ depending only on $d,\La/\la$ such that if $q_0\le q\le 2$ and  $g\in L^p(\Z^d\times\R,\C^d\otimes\C^d)$ with $p=2q/(3q-2)$  the inequality
\be \label{E2}
\|\mathcal{P}\sum_{x\in\Z^d} \int_{-\infty}^\infty dt \ g(x,t)\pa_\xi\Phi(\xi,\eta,\tau_{x,-t}\cdot)v\|_{\mathcal{H}(\Om)}   \ \le \ \frac{CK^{1/2}_{\tilde{P}}\|D\tilde{\mathbf{a}}(\cdot)\|_\infty|v|}{\La}K_{\om,q} \|g\|_p \ , \quad v\in\C^d,
\ee
holds for a constant $C$ depending only on $d,n,k,\La/\la,q_0$. 
\end{proposition}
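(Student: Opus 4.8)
The plan is to estimate the variance of the random variable $X(g) = \sum_{x}\int dt\, g(x,t)\pa_\xi\Phi(\xi,\eta,\tau_{x,-t}\cdot)v$ by the Poincar\'{e} inequality (\ref{J1}) applied on the space $(\tilde{\Om},\tilde{\mathcal{F}},\tilde{P})$, since $\|\mathcal{P}X(g)\|_{\mathcal{H}(\Om)}^2 = \mathrm{Var}[X(g)]$. Thus the first step is to compute the gradient $d_{\tilde{\om}}X(g)$ as a function on $\Z^d\times\R$. By the chain rule, $d_{\tilde{\om}}$ of a function of $\om$ decomposes through $d_{\tilde{\om}}\om$; since $\mathbf{a}(\om)=\tilde{\mathbf{a}}(\om(0,0))$ depends only on the coordinate value at the origin, differentiating the defining equation (\ref{C2}) for $\Phi$ with respect to $\tilde{\om}$ will produce a linear equation of the same type $[\eta+\pa]\,(\cdot) + \pa_\xi^*\mathbf{a}(\om)\pa_\xi(\cdot) = (\text{source})$ for $d_{\tilde{\om}}\Phi$, with the source involving $D\tilde{\mathbf{a}}$ acting on $\pa_\xi\Phi$ and on $\pa_\xi^*\mathbf{a}$, multiplied by $d_{\tilde{\om}}\om(0,0;\cdot,\cdot,\tilde{\om})$. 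Then I would invoke the energy estimate (\ref{D2}) (applied to this derived equation) to control $\|\pa_\xi d_{\tilde{\om}}\Phi\|_{\mathcal{H}(\Om)}$ in terms of $\|D\tilde{\mathbf{a}}\|_\infty$, $\la$, $\La$, and the $L^2$-in-$(z,s)$ norm of $d_{\tilde{\om}}\om(0,0;z,s,\tilde{\om})$.

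The second step is to assemble these pieces and carry out the $L^p$-$L^{p'}$ bookkeeping that produces the exponent $p = 2q/(3q-2)$. After applying Poincar\'{e}, $\mathrm{Var}[X(g)] \le K_{\tilde P}\,\langle\|d_{\tilde{\om}}X(g)\|_2^2\rangle$, and $d_{\tilde{\om}}X(g)$ at a point $(z,s)$ is a sum over $x$ and integral over $t$ of $g(x,t)$ times the $(z,s)$-gradient of $\pa_\xi\Phi(\xi,\eta,\tau_{x,-t}\cdot)v$, which by translation covariance equals the $(z-x,s+t)$-evaluation of $d_{\tilde{\om}}[\pa_\xi\Phi]$ of the translated field. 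The key structural input is the uniform integrability condition (\ref{N1}): $\sup_{\tilde\om}|d_{\tilde{\om}}\om(0,0;z,s,\tilde\om)|$ is $L^q$ in $(z,s)$ with norm $K_{\om,q}$. Using $q<2$, interpolation between the $L^\infty$-in-$\tilde\om$/$L^q$-in-$(z,s)$ control on the $\om$-gradient and the $L^2(\Om)$ energy control on $\pa_\xi\Phi$, combined with Young's/H\"older's inequality in the convolution over $(x,t)$ against $g$, will produce the norm $\|g\|_p$ with the stated $p$; the precise exponent arithmetic ($p = 2q/(3q-2)$) is exactly the balance that makes a double application of H\"older (once to pull out $K_{\om,q}$ in $(z,s)$, once to pull out $\|g\|_p$ in $(x,t)$, with $L^2$ left over for the energy estimate) close. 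The threshold $q_0<2$ arises from requiring $p\ge 1$, i.e. $q \ge 2/(3-1)\cdot$(something), so that $\|g\|_p$ makes sense; one tracks that $q_0$ depends only on $d,\La/\la$ because those are the only parameters entering the a priori Green's-function-type decay bounds that made (\ref{N1}) usable in the first place.

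The main obstacle I anticipate is the rigorous justification of differentiating the solution $\Phi(\xi,\eta,\om)$ of the infinite-dimensional equation (\ref{C2}) with respect to $\tilde{\om}$, i.e. showing $d_{\tilde{\om}}\Phi$ exists, is Borel measurable, solves the derived equation, and that the formal chain rule through the translation-covariance identity $\Phi(\xi,\eta,\tau_{x,-t}\om)$ is legitimate. This is a genuine analytic point: one must approximate, commute limits with the solution operator $[\eta+\pa+\pa_\xi^*\mathbf{a}\pa_\xi]^{-1}$, and use that this operator is bounded on the relevant Hilbert spaces uniformly (the bound coming from (\ref{D2})), so that the $\del\to0$ difference quotients in (\ref{H1}) converge. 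Since Proposition 6.3 of \cite{cf1} handled the analogous point for the Poincar\'{e}-on-$\Om$ case and $\S2$ of this paper states the argument extends, I would follow that template: set up the difference quotient of (\ref{C2}), subtract, use the energy estimate to show the difference quotients are Cauchy in $\mathcal{H}(\Om)$, identify the limit, and then verify the convolution identity for $d_{\tilde{\om}}X(g)$ by a density argument over compactly supported $h$ in (\ref{I1}). Once that is in place, the remaining estimates are the routine H\"older/Young manipulations sketched above.
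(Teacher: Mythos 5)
Your first step (Poincar\'{e} on $(\tilde{\Om},\tilde{P})$, chain rule through $d_{\tilde{\om}}\om$, differentiating (\ref{C2}) to get a divergence-form parabolic equation for the $\om$-gradient of $\Phi$ with source built from $D\tilde{\mathbf{a}}$, $v+\pa_\xi\Phi$, and $d_{\tilde{\om}}\om(0,0;\cdot,\cdot)$) matches the paper's argument. But there is a genuine gap in the second step. After translation the variance is controlled by $\|g*\na u\|_{L^2(\Z^d\times\R,L^2(\tilde\Om))}^2$, where $u$ solves $[\eta-\pa_s]u+\na^*\tilde{\mathbf{a}}(\om(z,-s))\na u=-\na^* f$ and $f\in L^q(\Z^d\times\R,L^2(\tilde\Om,\C^d\otimes\C^k))$ with $\|f\|_q\lesssim\|D\tilde{\mathbf{a}}\|_\infty(1+\La/\la)|v|K_{\om,q}$ (this is where (\ref{N1}) and (\ref{D2}) enter). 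To apply Young's inequality $\|g*\na u\|_2\le\|g\|_p\|\na u\|_q$ with $1/p+1/q=3/2$ --- which is the single step that produces $p=2q/(3q-2)$ --- you need $\na u\in L^q$ for the \emph{same} $q<2$ as in (\ref{N1}). The energy estimate gives only $\|\na u\|_2\le\|f\|_2/\la$, i.e.\ the case $q=2$, $p=1$. There is no interpolation from the $L^2\to L^2$ energy bound alone that yields the $L^q\to L^q$ bound for $q<2$; you would need a second endpoint, and producing one is precisely the content of the missing ingredient: the parabolic version of Meyer's theorem (Jones' theorem for parabolic singular integrals, valid for Hilbert-space-valued functions), which gives $\|\na u\|_q\le 2\|f\|_q/\la$ for $q_0\le q\le 2$. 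This is also where $q_0$ and its dependence on $d,\La/\la$ actually come from: Meyer's theorem is perturbative off the constant-coefficient operator, and its admissible exponent range shrinks as $\La/\la$ grows.

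Two smaller corrections. First, your claim that $q_0$ ``arises from requiring $p\ge 1$'' is not right: for $1\le q\le 2$ one always has $1\le 2q/(3q-2)\le 2$, so that constraint is vacuous. Second, the dependence of $q_0$ on $\La/\la$ has nothing to do with Green's-function decay bounds underlying (\ref{N1}); in this proposition (\ref{N1}) is a hypothesis and $K_{\om,q}$ appears explicitly in the conclusion. Your concern about rigorously differentiating $\Phi$ with respect to $\tilde{\om}$ is legitimate but secondary; the paper handles it at the same formal level you describe. The argument you outline, completed by the $q=2$ energy estimate only, proves (\ref{E2}) solely for $p=1$; to reach $p>1$ you must import the Meyer/Jones $L^q$ estimate for the variable-coefficient parabolic solution operator.
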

\begin{proof}
From (\ref{J1}) we have that
\begin{multline} \label{F2}
\|\mathcal{P}\sum_{x\in\Z^d} \int_{-\infty}^\infty dt \ g(x,t)\pa_\xi\Phi(\xi,\eta,\tau_{x,-t}\cdot)v\|_{\mathcal{H}(\Om)}  ^2   \\ \le \ 
K_{\tilde{P}}\sum_{z\in \Z^d}\int_{-\infty}^\infty ds \  \left\langle\left| \ \frac{\pa}{\pa \tilde{\om}(z,-s)}  \  \sum_{x\in\Z^d} \int_{-\infty}^\infty dt \ g(x,t)\pa_\xi\Phi(\xi,\eta,\tau_{x,-t}\cdot)v\right|_2^2\right\rangle  \ . 
\end{multline}
From the chain rule we see that
\be \label{G2}
\frac{\pa}{\pa \tilde{\om}(z,-s)} \pa_\xi\Phi(\xi,\eta,\tau_{x,-t}\cdot)v \ = \  \sum_{y\in\Z^d} 
  \int_{-\infty}^\infty dt'\ \left[\frac{\pa}{\pa \om(y,t')}\pa_\xi\Phi(\xi,\eta,\tau_{x,-t}\cdot)v\right] d_{\tilde{\om}}\om(z,-s; y,t',\tilde{\om})\ .
\ee

Next we do a translation of the functions on the RHS of (\ref{G2}). Translation of a function $G:\tilde{\Om}\ra\C$ through $(x,t)\in\Z^d\times\R$ is defined by $\tau_{x,t}G(\tilde{\om})=G(\tau_{x,t}\tilde{\om}), \ \tilde{\om}\in\tilde{\Om}$. For a function $G:\Om\ra\C$ there are two possible notions of translation through $(x,t)$, the first being given by $\tau_{x,t}G(\om)=G(\tau_{x,t}\om), \ \om\in\Om$. Since $\om$ is  a function of $\tilde{\om}\in\tilde{\Om}$ we can also define translation through $(x,t)$ by regarding $G:\Om\ra\C$ as a function on $\tilde{\Om}$ and doing the translation on $\tilde{\Om}$. It follows from the translation invariance property (\ref{L1}) that both of these notions are the same. 
Now using the translation invariance of the probability measure $\tilde{P}$ on $\tilde{\Om}$ we conclude from (\ref{F2}), (\ref{G2}) that
\begin{multline} \label{H2}
\|\mathcal{P}\sum_{x\in\Z^d} \int_{-\infty}^\infty dt \ g(x,t)\pa_\xi\Phi(\xi,\eta,\tau_{x,-t}\cdot)v\|_{\mathcal{H}(\Om)}  ^2   \ \le \
K_{\tilde{P}}\sum_{z\in \Z^d} \int_{-\infty}^\infty ds \\  \left\langle\left|   \ \sum_{x\in\Z^d} \int_{-\infty}^\infty dt  \  g(x,t)\sum_{y\in\Z^d} 
  \int_{-\infty}^\infty dt'\ \left[\tau_{-z,s}\frac{\pa}{\pa \om(y,t')}\pa_\xi\Phi(\xi,\eta,\tau_{x,-t}\cdot)v\right] \tau_{-z,s}d_{\tilde{\om}}\om(z,-s; y,t',\tilde{\om})  \ \right|_2^2  \right\rangle  \ . 
\end{multline}

We define a function $u:\Z^d\times\R\times\tilde{\Om}\ra\C^k$ by 
\be \label{I2}
u(z,s,\tilde{\om}) \ = \ e^{-iz\cdot\xi}\sum_{y\in\Z^d}  \int_{-\infty}^\infty dt' \  \left[
 d_\om\Phi(y,t';\xi,\eta,\tau_{z,-s}\om)v\right]d_{\tilde{\om}}\om(0,0; y+z,t'-s,\tilde{\om}) \ ,
\ee
where $ d_\om\Phi(\cdot;\xi,\eta,\om)v:\Z^d\times\R\ra\C^n$ is the gradient of $\Phi(\xi,\eta,\om)v$ with respect to $\om\in\Om$.
Observe now from (\ref{L1})  that
\begin{multline} \label{J2}
d_{\tilde{\om}}\om(z,-s; y,t',\tilde{\om})   \ = \ \frac{\pa}{\pa \om(z,-s)}\om(y,t',\tilde{\om}) \ = \\ 
\frac{\pa}{\pa \om(z,-s)}\om(y-z,t'+s,\tau_{z,-s} \ \tilde{\om}) \ = \ 
d_{\tilde{\om}}\om(0,0; y-z,t'+s,\tau_{z,-s} \ \tilde{\om}) \ .
\end{multline}
Hence we have that
\begin{multline} \label{K2}
e^{i(z-x).\xi}\sum_{y\in\Z^d} 
  \int_{-\infty}^\infty dt'\ \left[\tau_{-z,s}\frac{\pa}{\pa \om(y,t')}\Phi(\xi,\eta,\tau_{x,-t} \ \om)v\right] \tau_{-z,s}d_{\tilde{\om}}\om(z,-s; y,t',\tilde{\om}) \ = \\
 e^{i(z-x).\xi}\sum_{y\in\Z^d}  \int_{-\infty}^\infty dt' \ \left[
 d_\om\Phi(y-x,t'+t;\xi,\eta,\tau_{x-z,s-t} \ \om)v\right]d_{\tilde{\om}}\om(0,0; y-z,t'+s,\tilde{\om}) \ = \\
 e^{i(z-x).\xi}\sum_{y\in\Z^d}  \int_{-\infty}^\infty dt' \ \left[
 d_\om\Phi(y,t';\xi,\eta,\tau_{x-z,s-t} \ \om)v\right]d_{\tilde{\om}}\om(0,0; y+x-z,t'+s-t,\tilde{\om})  \\
 = \ u(x-z,t-s,\tilde{\om}) \ .
\end{multline}
It follows from (\ref{K2}) that (\ref{H2}) can be rewritten as
\begin{multline} \label{L2}
\|\mathcal{P}\sum_{x\in\Z^d} \int_{-\infty}^\infty dt \ g(x,t)\pa_\xi\Phi(\xi,\eta,\tau_{x,-t}\cdot)v\|_{\mathcal{H}(\Om)}  ^2   \ \le \\
K_{\tilde{P}}\sum_{z\in \Z^d}  \int_{-\infty}^\infty ds \ \langle \ |   \ \sum_{x\in\Z^d} \int_{-\infty}^\infty dt \  g(x,t)e^{i(x-z)\cdot\xi}\na u(x-z,t-s,\cdot) \ |_2^2 \ \rangle \ .
\end{multline}

In \cite{cf1} we  defined the $\xi$ derivative of a measurable function  $\psi:\Z^d\times\R\times\Om\ra\C$  in the $j$ direction by $D_{j,\xi}$, and its adjoint by  $D^*_{j,\xi}$, where
\begin{eqnarray} \label{M2}
D_{j,\xi} \psi(x,t;\om) \ &=& \  e^{-i{\bf e}_j.\xi}\psi(x-\mathbf{e}_j,t;\tau_{{\bf e}_j}\om)-\psi(x,t;\om),  \\
D^*_{j,\xi} \psi(x,\om) \ &=& \   e^{i{\bf e}_j.\xi}\psi(x+\mathbf{e}_j,t;\tau_{-{\bf e}_j}\om)-\psi(x,t;\om). \nonumber
\end{eqnarray}
The corresponding $d$ dimensional column $\xi$ gradient operator  $D_\xi$ is then given by  $D_\xi=(D_{1,\xi},....,D_{d,\xi})$, and it has adjoint  $D_\xi^*$  given by the row operator $D_\xi^*=(D^*_{1,\xi},....,D^*_{d,\xi})$.  We also defined the time derivative $D_0$ of $\psi:\Z^d\times\R\times\Om\ra\C$ by
\be \label{N2}
D_0\psi(x,t;\om) \ = \ \lim_{\del\ra 0}[\psi(x,t-\del;\tau_{0,\del}\om)-\psi(x,t;\om)]/\del \ .
\ee
We see from (\ref{A2}), (\ref{B2})  that these operators satisfy the identities
\begin{eqnarray} \label{O2}
\frac{\pa}{\pa\om(y,t)} \pa_\xi\psi(\om) \ &=& \ D_\xi d_\om\psi(y,t;\om), \quad y\in\Z^d,t\in\R,\om\in\Om, \\
\frac{\pa}{\pa\om(y,t)} \pa\psi(\om) \ &=& \ D_0 d_\om\psi(y,t;\om), \quad y\in\Z^d,t\in\R,\om\in\Om, \nonumber
\end{eqnarray}
 for  differentiable  functions $\psi:\Om\ra\C$. A similar relationship holds  for the adjoints $\pa^*_\xi,D^*_\xi$. Hence on taking the gradient of  equation (\ref{C2}) with respect to $\om(\cdot)$ we conclude from (\ref{O2}) that 
\begin{multline} \label{P2}
[\eta +D_0]\   d_\om\Phi(y,t';\xi,\eta,\om)v+D_\xi^*\tilde{{\bf a}}(\om(0,0))D_\xi  \  d_\om\Phi(y,t';\xi,\eta,\om)v \\
 = \ -D_\xi^*[ \ \del(y,t') D\tilde{{\bf a}}(\om(0,0))\{v+\pa_\xi \Phi(\xi,\eta,\om)v\} ] \quad {\rm for \ } y\in\Z^d,t'\in\R,\om\in\Om.
\end{multline}

Evidently (\ref{P2}) holds with $\om\in\Om$ replaced by $\tau_{z,-s}\om$ for any $z\in\Z^d,s\in\R$. We now multiply (\ref{P2}) with $\tau_{z,-s}\om$ in place of $\om$ on the right by $e^{-iz\cdot\xi}d_{\tilde{\om}}\om(0,0;y+z,t'-s,\tilde{\om})$, sum with respect to $y\in\Z^d$ and integrate with respect to $t'\in\R$. It then follows from (\ref{I2}), (\ref{P2}) that 
\be \label{Q2}
\eta \ u(z,s,\tilde{\om})-\frac{\pa u(z,s,\tilde{\om})}{\pa s}+\na^*\tilde{{\bf a}}(\om(z,-s))\na u(z,s,\tilde{\om}) \ = \ -\na^*f(z,s,\tilde{\om}) \ ,
\ee
where the function $f:\Z^d\times\R\times\tilde{\Om}\ra\C^d\otimes\C^k$ is given by the formula
\be \label{R2}
f(z,s,\tilde{\om}) \ = \  D\tilde{{\bf a}}(\om(z,-s))\{v+\pa_\xi \Phi(\xi,\eta,\tau_{z,-s}\om)v\} e^{-iz\cdot\xi} d_{\tilde{\om}}\om(0,0;z,-s,\tilde{\om}) \ .
\ee
For any $1<q<\infty$ we  consider the function $f$ as a mapping $f:\Z^d\times\R\ra  L^2(\tilde{\Om},\C^d\otimes\C^k)$  with norm defined by
\be \label{S2}
\|f\|_q^q \ = \  \sum_{y\in\Z^d}\int_{-\infty}^\infty dt' \|f(y,t',\cdot)\|_2^q \ ,
\ee
where $\|f(y,t',\cdot)\|_2$ is the norm of $f(y,t'\cdot)\in L^2(\tilde{\Om},\C^d\otimes\C^k)$. 
Now from (\ref{D2}) it follows that $\pa_\xi \Phi(\xi,\eta,\cdot)v\in \mathcal{H}(\Om)$ and  $\|\pa_\xi \Phi(\xi,\eta,\cdot)v\|_{\mathcal{H}(\Om)}\le \La|v|/\la$. Hence if the inequality (\ref{N1}) holds  then the function $f$ is in $L^q(\Z^d\times\R,  L^2(\tilde{\Om},\C^d\otimes\C^k))$ and $\|f\|_q\le \|D\tilde{{\bf a}}(\cdot)\|_\infty(1+\La/\la)|v|K_{\om,q}$.  We see  from (\ref{Q2}) that in the case $q=2$ then  $\na u$ is also in  $L^2(\Z^d\times\R,  L^2(\tilde{\Om},\C^d\otimes\C^k))$ and $\|\na u\|_2\le \|f\|_2/\la$. It follows now from (\ref{L2}) that  (\ref{E2}) holds with $q=2$ and $p=1$.  

To prove the inequality for some $p>1$ we use the parabolic version of Meyer's theorem \cite{m}. We note that just as the Calderon-Zygmund theorem applies to functions with range in a Hilbert space \cite{stein},  Jone's theorem for parabolic multipliers \cite{j} also applies to  functions with range in a Hilbert space (see \cite{cf1}).  We conclude  that there exists $q_0$ depending only on $d,\La/\la$ with $1<q_0<2$ such that if  $\|f\|_{q}<\infty$  for any $q$ satisfying $q_0\le q\le 2$ then $\|\na u\|_q\le2\|f\|_q/\la$. Assume now that (\ref{N1}) holds for some $q$ in the interval  $q_0\le q\le 2$. Then by Young's inequality for convolutions we see  from (\ref{L2}) that (\ref{E2}) holds with $p=2q/(3q-2)$.   
\end{proof}

\vspace{.1in}

\section{Proof of Theorem 1.1 and Theorem 1.2}
The basic approach of \cite{cf1} is to use the fact that the solution to (\ref{A1}) can be expressed by a Fourier inversion formula. For $\eta\in\C$ denote its real part by $\Re\eta\in\R$ and its imaginary part by $\Im\eta\in\R$ so that $\eta=\Re\eta+i\Im\eta$, and similarly denote the real and imaginary parts of $\xi\in\C^d$ by $\Re\xi,\Im\xi\in\R^d$  whence $\xi=\Re\xi+i\Im\xi$.    We consider solutions to  the equation 
\be \label{A3}
[\eta+\pa]\Phi(\xi,\eta,\om)+\mathcal{P}\pa_\xi^*{\bf a}(\om)\pa_\xi\Phi(\xi,\eta,\om) \ = \ -\mathcal{P}\pa^*_\xi {\bf a}(\om), \quad  \Re\eta>0, \ \xi\in \R^d, \ \om\in\Om.
\ee
As with (\ref{C2}) there exists a unique solution to (\ref{A3}) such that $\Phi(\xi,\eta,\cdot)v\in L^2(\Om)$ for any $v\in\C^d$. Furthermore $\Phi(\xi,\eta,\cdot)v\in L^2(\Om)$  satisfies the inequality
\be \label{B3}
\Re\eta\|\Phi(\xi,\eta,\cdot)v\|^2_{L^2(\Om)}+\la\|\pa_\xi\Phi(\xi,\eta,\cdot)v\|^2_{\mathcal{H}(\Om)} \ \le \  \La^2|v|^2/\la \ .
\ee
 If $\xi=0$ the  solution $\Phi(\xi,\eta,\om)$ to (\ref{C2}) has zero mean so $\langle \ \Phi(0,\eta,\cdot) \ \rangle=0$.  Hence the solutions to (\ref{C2}), (\ref{A3}) coincide if $\xi=0$  but are in general different.  For $\xi\in\R^d$ and $\eta\in\C$ with $\Re\eta>0$ let $e(\xi)\in\C^d$ be the vector $e(\xi)=\pa_\xi 1$ and $q(\xi,\eta)$ be the $d\times d$ matrix
\be \label{C3}
q(\xi,\eta) \ = \  \langle \ {\bf a}(\cdot) \ \rangle+ \langle \ {\bf a}(\cdot)\pa_\xi  \Phi(\xi,\eta,\cdot) \ \rangle \ ,
\ee
where  $\Phi(\xi,\eta,\om)$  is the solution to (\ref{A3}). The solution to (\ref{A1}), (\ref{B1}) is  shown  in \cite{c2} to be given by the formula 
\be \label{D3}
u(x,t,\om)  =  \frac{1}{(2\pi)^{d+1}}\int_{[-\pi,\pi]^d} \int_{-\infty}^\infty
\frac{\hat{h}(\xi)e^{-i\xi.x+\eta t}}{\eta+e(\xi)^*q(\xi,\eta)e(\xi)}\left[1+\Phi(\xi,\eta,\tau_{x,t}\om)e(\xi)\right] \ d[\Im\eta] \ d\xi  \ .   
\ee
If the environment $(\Om,\mathcal{F},P)$ is ergodic then the limit  $\lim_{\eta\ra 0} q(0,\eta)=\mathbf{a}_{\rm hom}$ exists, and  $\mathbf{a}_{\rm hom}$ is the diffusion matrix for the homogenized equation (\ref{E1}). Let  $\hat{G}_{\mathbf{a}}(\xi,\eta), \ \xi\in[-\pi,\pi]^d, \ \Re\eta>0,$  be the Fourier-Laplace transform of  the averaged Green's function $G_{{\bf a}}(x,t), \ x\in\Z^d,t\ge 0,$ for (\ref{A1}), (\ref{B1}) defined by
\be \label{E3}
\hat{G}_{{\bf a}}(\xi,\eta) \ = \  \int_{0}^\infty dt\sum_{x\in\Z^d} G_{{\bf a}}(x,t)\exp[ix.\xi-\eta t] \ .
\ee
 It follows from (\ref{D3}) that $\hat{G}_{\mathbf{a}}(\xi,\eta)$ is given by the formula
\be \label{F3}
\hat{G}_{\mathbf{a}}(\xi,\eta) \ = \  1/[\eta+e(\xi)^*q(\xi,\eta)e(\xi)] \quad {\rm for \ } \xi\in[-\pi,\pi]^d \ , \Re\eta>0.
\ee
In \cite{cf1} it was shown (see especially $\S3$ and Theorem 4.2 of \cite{cf1}) that Theorem 1.1 and Theorem 1.2 are  consequences of the following:
\begin{hypothesis} 
For $\xi\in\C^d, \ \eta\in\C$  there exist positive constants $C_1$ and  $\al\le1$ depending only on $d$ and $\La/\la$,  such the function  $q(\xi,\eta), \ \xi\in\R^d, \ \Re\eta>0,$ has an analytic continuation to the region $|\Im\xi|< C_1\sqrt{\Re\eta/\La}, \ 0<\Re\eta< \La,$ and 
\begin{multline} \label{G3}
\|q(\xi',\eta')-q(\xi,\eta)\|\le C\La \left[ \ |\xi'-\xi|^\alpha +|(\eta'-\eta)/\La|^{\al/2} \ \right]  \ , \\
0<\Re\eta\le \Re\eta'\le \La, \quad \xi',\xi\in\C^d \ {\rm with \ } |\Im\xi|,  \ |\Im\xi'|\le C_1\sqrt{\Re\eta/\La} \ ,
\end{multline}
where $C$ is a constant depending on the environment and the function $\mathbf{a}(\cdot)$. 
\end{hypothesis}

Here we shall prove that Hypothesis 3.1 holds for the massless field theory environment $(\Om,\mathcal{F},P)$ of Theorem 1.1. To do this we recall some operators defined in \cite{cf1}. For any $g\in \mathcal{H}(\Om)$, let 
$\psi(\xi,\eta,\om)$ be  the solution to the equation
\be \label{H3}
\frac{1}{\La}[\eta+\pa]\psi(\xi,\eta,\om)+\pa_\xi^*\pa_\xi\psi(\xi,\eta,\om)=\pa^*_\xi g(\om), \quad \Re\eta>0, \ \xi\in \R^d, \ \om\in\Om.
\ee
The operator $T_{\xi,\eta}$ on $\mathcal{H}(\Om)$ is defined by  $T_{\xi,\eta} g(\cdot)=\pa_\xi\psi(\xi,\eta,\cdot)$. Let $G(x,t), \ x\in\Z^d, \ t>0,$ be the solution to the initial value problem
\begin{eqnarray} \label{I3}
\frac{\pa G(x,t)}{\pa t} +\na^*\na G(x,t) \ &=& \ 0, \quad x\in\Z^d, \ t>0, \\
G(x,0) \ &=& \ \del(x), \quad x\in\Z^d \ . \nonumber
\end{eqnarray}
It is well known that there exist positive constants $C,\ga$ depending only on $d$ such that $G$ satisfies the inequality
\begin{multline} \label{I*3}
G(x,t)+(t+1)^{1/2}|\na G(x,t)|+(t+1)|\na\na^* G(x,t)| \\ \le \ 
\ \frac{C}{[t+1]^{d/2}}\exp\left[-\ga\min\left\{ |x|, \ \frac{|x|^2}{t+1}\right\}\right] \ , \quad {\rm for \ } x\in\Z^d, \ t\ge 0.
\end{multline}
The operator $T_{\xi,\eta}$ is also given by the formula
\be \label{J3}
T_{\xi,\eta} g(\om) \ = \   \La\int_{0}^\infty e^{-\eta t} \ dt\sum_{x\in \Z^d} \left\{\nabla\nabla^* G(x,\La t)\right\}^*\exp[-ix.\xi]  \ g(\tau_{x,-t}\om) \ .
\ee 
 It easily follows from (\ref{H3}) that $T_{\xi,\eta}$ is a bounded operator on   $\mathcal{H}(\Om)$ with $\|T_{\xi,\eta} \|_{\mathcal{H}(\Om)}\le 1$  provided $\xi\in\R^d, \Re\eta>0$. Furthermore by Lemma 2.1 of \cite{cf1} the function $(\xi,\eta)\ra T_{\xi,\eta} $ from $\R^d\times\R$ to  the Banach space of bounded linear operators on $\mathcal{H}(\Om)$ has an analytic continuation to a strip $0<\Re\eta<\La, \ |\Im\xi |<C\sqrt{\Re\eta/\La}$ where $C$ is a constant depending only on $d$.   
 
 Let $\mathbf{b}$ be the $d\times d$ matrix valued function $ \mathbf{b}(\om)=I_d-\mathbf{a}(\om)/\La, \ \om\in\Om,$ whence (\ref{D1}) implies the quadratic form inequality $0\le \mathbf{b}(\cdot)\le (1-\la/\La)I_d$. It is easy to see from (\ref{C3}) that 
 \be \label{C*3}
q(\xi,\eta) \ = \  \langle \ {\bf a}(\cdot) \ \rangle- \La\sum_{m=1}^\infty\langle \ {\bf b}(\cdot)\left[ PT_{\xi,\eta}{\bf b}(\cdot)\right]^m \ \rangle  \ .
\ee
We consider $\xi\in\C^d,\eta\in\C$ with $\xi$ having fixed imaginary part,  $\eta$ having fixed positive real part, and satisfying the conditions of Hypothesis 3.1.  For $k=1,2,..,$ we define an operator $T_{k,\Im\xi,\Re\eta}$  from functions $g:\Z^{d}\times\R\ra\C^d\otimes \C^d$ to periodic functions $T_{k,\Im\xi,\Re\eta} \ g:[-\pi,\pi]^{d}\times\R\times\Om\ra \C^d\otimes\C^d$ by
\be \label{K3}
T_{k,\Im\xi,\Re\eta} \ g(\Re\xi,\Im\eta,\cdot) \ = \   \sum_{x\in\Z^{d}}\int_{-\infty}^\infty  dt \  g(x,t)\tau_{x,-t}\mathcal{P}{\bf b}(\cdot)\left[ \mathcal{P}T_{\xi,\eta}{\bf b}(\cdot)\right]^{k-1} \ ,
\ee
where $\xi=\Re\xi+i\Im\xi, \ \eta=\Re\eta+i\Im\eta$ in (\ref{K3}). 
For $1\le p<\infty$ let $L^p(\Z^d\times\R,\C^d\otimes\C^d)$ be the Banach space of $d\times d$ matrix valued functions $g:\Z^d\times\R\ra\C^d\otimes\C^d$ with norm $\|g\|_p$ defined by
\be \label{L3}
\|g\|_p^p  =  \sup_{v\in\C^d:|v|=1}\sum_{x\in\Z^d}\int_{-\infty}^\infty dt \  |g(x,t)v|_2^p \ ,
\ee
where $|g(x,t)v|_2$ is the Euclidean norm of the vector $g(x,t)v\in\C^d$. We similarly define the space  $L^\infty([-\pi,\pi]^d\times\R\times\Om,\C^d\otimes\C^d)$ of $d\times d$ matrix valued functions $g:[-\pi,\pi]^d\times\R\times\Om\ra \C^d\otimes\C^d$ with norm $\|g\|_\infty$ defined by
\be\label{M3}
\quad \|g\|_\infty= \sup_{v\in\C^d:|v|=1}\left[ \ \sup_{\zeta\in[-\pi,\pi]^d,  \theta\in\R}  ||g(\zeta,\theta,\cdot)v||_{\mathcal{H}(\Om)} \ \right]  \ .
\ee
Since $\|T_{\xi,\eta} \|_{\mathcal{H}(\Om)}\le 1$  if $\xi\in\R^d, \Re\eta>0$
 it follows from (\ref{K3}), (\ref{L3}) that if $\Im\xi=0$ then $T_{k,\Im\xi,\Re\eta}$  is a bounded operator from $L^1(\Z^d\times\R,\C^d\otimes\C^d)$ to $L^\infty([-\pi,\pi]^d\times\R\times\Om,\C^d\otimes\C^d)$  with norm  $\|T_{k,\Im\xi,\Re\eta}\|_{1,\infty}\le (1-\la/\La)^k$.  In the next section we show that  $T_{k,\Im\xi,\Re\eta}$ is a bounded operator from $L^p(\Z^d\times\R,\C^d\otimes\C^d)$ to $L^\infty([-\pi,\pi]^d\times\R\times\Om,\C^d\otimes\C^d)$  for some $p>1$  in the case of the environment of Theorem 1.1 and estimate its norm $\|T_{k,\Im\xi,\Re\eta}\|_{p,\infty}$.  In particular we prove:
\begin{lem}
Let $(\Om,\mathcal{F},P)$ be an environment of massless fields $\phi:\Z^d\times\R\ra\R$ with $d\ge 3$, and $\tilde{\mathbf{a}}:\R\ra\R^{d(d+1)/2}$ be  as in the statement of Theorem 1.1. Set  $\mathbf{a}(\phi)=\tilde{\mathbf{a}}(\phi(0,0)), \ \phi\in\Om$.  Then there exists $p_0(\La/\la)$ with $1<p_0(\La/\la)<2$ depending only on $d$ and $\La/\la$, and positive constants $C_1(\La/\la),C_2(\La/\la)$ depending only on $d$ and $\La/\la$ such that  for $ 0<\Re\eta<\La, \ \ |\Im\xi |<C_1(\La/\la)\sqrt{\Re\eta/\La}$,
\be \label{N3}
\|T_{k,\Im\xi,\Re\eta}\|_{p,\infty} \ \le \ \frac{C_2(\La/\la) k\|D\tilde{\mathbf{a}}(\cdot)\|_\infty}{\La^{5/2-1/p}}(1-\la/\La)^{(k-1)/2}  \big[1+C_2|\Im\xi|^2\big/(\Re\eta/\La)\ \big]^{k-1} 
\ee
 provided $ 1\le p\le p_0(\La/\la)$. 
\end{lem}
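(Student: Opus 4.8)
The plan is to bootstrap from the $p=1$ bound $\|T_{k,\Im\xi,\Re\eta}\|_{1,\infty}\le(1-\la/\La)^k$ already recorded after (\ref{K3}), using Proposition 2.1 to improve integrability by one "step" of the operator chain. First I would write $T_{k,\Im\xi,\Re\eta}\,g$ by peeling off the last application of $\mathcal{P}T_{\xi,\eta}\mathbf{b}$: set $h=T_{k-1,\Im\xi,\Re\eta}\,g$, a function on $[-\pi,\pi]^d\times\R\times\Om$, so that $T_{k,\Im\xi,\Re\eta}\,g=\mathcal{P}\,h\cdot(\text{one factor of }T_{\xi,\eta}\mathbf{b})$ acting via (\ref{K3}). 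The representation (\ref{J3}) of $T_{\xi,\eta}$ as a convolution against $\nabla\nabla^* G(x,\La t)$, together with the Gaussian bound (\ref{I*3}), shows $T_{\xi,\eta}\mathbf{b}(\cdot)$ applied to a fixed element of $\mathcal{H}(\Om)$ is of exactly the form $\mathcal{P}\sum_x\int dt\, g(x,t)\,\pa_\xi\Phi(\xi,\eta,\tau_{x,-t}\cdot)v$ handled by Proposition 2.1 — but here the relevant $\Phi$ is the solution to (\ref{H3}) rather than (\ref{A3}). The key structural point is that the gradient-of-solution estimate driving (\ref{E2}) is insensitive to the lower-order reaction/drift terms; so I would re-run the argument of Proposition 2.1 with $\Phi$ replaced by $\psi$ from (\ref{H3}), obtaining a bound of the form $\|\mathcal{P}(\text{convolution})\|_{\mathcal{H}(\Om)}\le C K_{\tilde P}^{1/2}\|D\tilde{\mathbf a}\|_\infty\La^{-1}K_{\om,q}\|g\|_p$ with $p=2q/(3q-2)$, where for the massless field $K_{\tilde P}=1$ (white noise) and $K_{\om,q}$ is bounded independently of $m$ by the Aronson estimate (\ref{U1}), (\ref{T1}) for any $q>1+2/d$, hence for some $q<2$ when $d\ge 3$.

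Next I would track the dependence on $\Im\xi$ and $\Re\eta$. The analytic continuation of $T_{\xi,\eta}$ off $\xi\in\R^d$ (Lemma 2.1 of \cite{cf1}) costs a factor controlled by $\exp[c|\Im\xi|\cdot|x|]$ in the kernel (\ref{J3}); combining this with the Gaussian decay $\exp[-\ga|x|^2/(\La t+1)]$ and integrating gives, after a standard completing-the-square estimate, an extra multiplicative factor $1+C|\Im\xi|^2/(\Re\eta/\La)$ per application of $T_{\xi,\eta}$ — this is exactly the bracketed term $[1+C_2|\Im\xi|^2/(\Re\eta/\La)]^{k-1}$ in (\ref{N3}). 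Then I would iterate: of the $k-1$ factors of $\mathcal{P}T_{\xi,\eta}\mathbf b$ in $T_{k,\Im\xi,\Re\eta}$, I use the improved $L^p\to L^\infty$ bound on the outermost one and the contractive $\mathcal{H}(\Om)$-bound $\|T_{\xi,\eta}\|\le1$ (times $(1-\la/\La)^{1/2}$ from $\mathbf b$) on the remaining ones. The square root $(1-\la/\La)^{(k-1)/2}$ in (\ref{N3}), rather than the full power $(1-\la/\La)^{k-1}$, reflects that one splits the contraction factors, spending half their decay to absorb the growth in $k$ and in $|\Im\xi|$; the linear prefactor $k$ arises from combining the $k-1$ places where the $\Im\xi$-factor can be inserted and from the telescoping of the convolution norms. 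The power $\La^{5/2-1/p}$ bookkeeps: $\La^{-1}$ from (\ref{E2}), $\La^{-3/2}$ from the normalization in (\ref{J3}) and the $t$-scaling $G(x,\La t)$ combined with the $L^p$ time integral (producing the $1/p$ correction), consistent with the $p=1$ endpoint.

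The main obstacle I expect is the re-derivation of Proposition 2.1 with the extra lower-order term present, i.e. verifying that replacing $\pa_\xi^*\mathbf a\pa_\xi$ by $(1/\La)(\eta+\pa)+\pa_\xi^*\pa_\xi$ in (\ref{H3}) does not spoil the energy identity: one needs that the $\eta+\pa$ term only helps (it contributes a nonnegative quantity to $\Re\eta\|\psi\|^2$, so the a priori bound (\ref{D2})-analogue still holds) and that Meyer's/Jones's parabolic multiplier theorem still yields the $L^q$ improvement $\|\nabla u\|_q\le 2\|f\|_q/\la$ for the differentiated equation (\ref{P2})-analogue, uniformly in $\eta$ with $\Re\eta$ bounded. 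This is where care is needed, because the symbol now carries a $\eta/\La$ mass term; but since $\Re\eta<\La$ the mass is bounded and the multiplier estimate is uniform, so the argument goes through. A secondary technical point is justifying the interchange of the gradient $d_{\tilde\om}$ with the infinite convolution defining $h=T_{k-1,\Im\xi,\Re\eta}g$ — this follows the same chain-rule and translation-invariance manipulations ((\ref{G2})--(\ref{K2})) used in the proof of Proposition 2.1, applied inductively, and the uniform integrability (\ref{N1}) together with $\|h\|_\infty$ finite (by the inductive hypothesis) guarantees absolute convergence of all the sums and integrals involved.
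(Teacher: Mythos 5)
Your proposal assembles the right ingredients (the white-noise Poincar\'e inequality with constant $1$, the Aronson bound (\ref{U1}) giving $L^q$ integrability of the random Green's function for $q>1+2/d$, hence some $q<2$ when $d\ge3$, Young's inequality producing $p=2q/(3q-2)$, and the Jones--Meyer multiplier theorem with the per-factor growth $1+C|\Im\xi|^2/(\Re\eta/\La)$), but the core reduction is flawed. The variance inequality has to be applied to the \emph{whole} functional $F(\phi)=T_{k,\Im\xi,\Re\eta}\,g(\Re\xi,\Im\eta,\phi)v$, and its field derivative is computed by the Leibniz rule over the $k$ factors of $\tilde{\mathbf{b}}$ in the chain, giving the $k$ terms displayed in (\ref{AE4}). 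Variances do not factor through a product, so you cannot peel off one application of $\mathcal{P}T_{\xi,\eta}\mathbf{b}$, apply a Proposition~2.1--type bound to it, and treat the remaining factors merely through the contractive $\mathcal{H}(\Om)$ bound: that accounts only for the single Leibniz term in which the derivative hits the factor adjacent to $g$ (the convolution $g*h_{k-1}$ in (\ref{BG4})). The other $k-1$ terms, where the derivative falls on an inner $\tilde{\mathbf{b}}$, are composed with the random Green's function $G(y,s;x,t,\phi)$ coming from the Malliavin derivative (\ref{H4}) and have no convolution structure in $(x,t)$; controlling them is the actual content of the proof and requires the inductive construction of the auxiliary solutions $u_k,f_k$ of the constant-coefficient problem (\ref{AR4})--(\ref{AX4}) (Lemma~4.3) together with the iterated multiplier bound (\ref{BT4}). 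That induction is also where the prefactor $k$ and the halved exponent $(1-\la/\La)^{(k-1)/2}$ really come from: one chooses $q$ close to $2$ so that $1+\del(q)<(1-\la/\La)^{-1/2}$ and spends half of the contraction to absorb the $[1+\del(q)]^{k}$ growth of the iterated Meyer estimate, not to absorb the growth in $|\Im\xi|$.

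A second, independent gap concerns the invocation of Proposition~2.1 and of the Poincar\'e inequality for the massless field. Proposition~2.1 controls, via (\ref{C*3}), the full Neumann series rather than the individual term $T_{k,\Im\xi,\Re\eta}$, so it cannot by itself produce any decay in $k$, which is indispensable for summing (\ref{P3}); and your intended substitute, the solution $\psi$ of (\ref{H3}), solves a constant-coefficient equation whose field derivative has none of the structure exploited in (\ref{P2}). Moreover, the statement that $K_{\tilde P}=1$ ``for the massless field'' cannot be applied directly on $(\Om,\mathcal{F},P)$: the massless process is only a limit, and the legitimate route is the one in Lemma~4.2 --- the Malliavin-calculus inequality (\ref{B4}) on a periodic cube with zero initial data, followed by the limits $T\ra\infty$, $Q\ra\Z^d$ and finally $m\ra 0$ using (\ref{AJ4}), (\ref{AQ4}), (\ref{BH4}), with all bounds uniform in $m$ thanks to Lemma~4.1 and (\ref{U1}). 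Your proposal notes the uniformity in $m$ but omits this limiting argument, without which the variance estimate for the massless environment is not justified.
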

To complete this section we show how Lemma 3.1 implies that Hypothesis 3.1 holds.
\begin{proof}[Proof of Hypothesis 3.1]
We assume that $(\xi,\eta)$ and $(\xi',\eta')$ are as in the statement of  Hypothesis 3.1. Let $g:\Z^d\times\R\ra\C^d\otimes\C^d$ be the function defined by
 \be \label{O3}
 g(x,t) \ = \ \La\{\na\na^*G(x,\La t)\}^* \ e^{-ix\cdot\xi'-\eta't}-\La\{\na\na^*G(x,\La t)\}^* \ e^{-ix\cdot\xi-\eta t}  \ ,
 \ee
where the Green's function $G(\cdot,\cdot)$ is defined by (\ref{I3}). It follows  from (\ref{J3}), (\ref{C*3}) and Lemma 2.1 of \cite{cf1} that the constant $C_1>0$ in (\ref{G3}) can be chosen depending only on $d$ and $\La/\la$ so that
 \be \label{P3}
\|[q(\xi',\eta')-q(\xi,\eta)]v\| \ \le \  C_2\La\sum_{k=1}^\infty \|T_{k,\Im\xi,\Re\eta}g(\Re\xi,\Im\eta,\cdot)v\|_{\mathcal{H}(\Om)} \quad {\rm for \ } |\Im\xi|,  \ |\Im\xi'|\le C_1\sqrt{\Re\eta/\La} \ , 
\ee
where $C_2$ is a constant depending only on $d,\La/\la$. 
We can see from (\ref{I*3})  that there is a constant $C_1$ depending only on $d$ such that if $ |\Im\xi|,  \ |\Im\xi'|< C_1\sqrt{\Re\eta/\La}$ then  the function $g$ is in $L^p(\Z^{d}\times\R,\C^d\otimes\C^d)$ for any  $p>1$. Furthermore if $0\le \al\le 1$ and $p> (d+2)/(d+2-\al)$ then $\|g\|_p$ satisfies the inequality
 \be \label{Q3}
 \|g\|_p \ \le \ C_p\La^{1-1/p}[ \ |\xi'-\xi|^\al+|(\eta'-\eta)/\La|^{\al/2} \ ] \  ,
 \ee
 where the constant $C_p$ depends only on $d,p$. The H\"{o}lder continuity (\ref{G3}) for sufficiently small $\al>0$ follows from  (\ref{P3}), (\ref{Q3}) and Lemma 3.1.
\end{proof}

\vspace{.1in}

\section{Proof of Lemma 3.1}
In \cite{cf1}  we proved that the operator $T_{k,\Im\xi,\Re\eta}$ of (\ref{K3}) is for some $p$ in the range $1\le p\le p_0(\La/\la)$  a bounded operator from  $L^p(\Z^d\times\R,\C^d\otimes\C^d)$ to $L^\infty([-\pi,\pi]^d\times\R\times\Om,\C^d\otimes\C^d)$   if the environment $(\Om,\mathcal{F},P)$ is the stationary process for the SDE (\ref{O1}) with $m>0$.  Here we take an alternative approach to proving this result which will allow us to study the $m\ra 0$ limit of $\|T_{k,\Im\xi,\Re\eta}\|_{p,\infty}$. We  first establish an inequality  for periodic fields $\phi:Q\ra\R$ on cubes $Q\subset\Z^d$, and then show that we can let $Q\ra\Z^d$ since our estimates are independent of $Q$.  Let $L$ be an even integer and $Q=Q_L$ denote the lattice points of $\Z^d$ contained  in the cube of length $L$ centered at the origin. In the following we identify  all points $x,y\in Q$ with $x-y=L\mathbf{e}_k$ for some $k, \ 1\le k\le d$. 

As in \cite{cf1} the Malliavin calculus \cite{ct,nu} is the main tool we  use to prove Lemma 3.1. We assume that $V:\R^d\ra\R$ is a $C^2$ uniformly convex function such that $\mathbf{a}(\cdot)=V''(\cdot)$   satisfies (\ref{D1}) and $m>0$. Letting $B(x,\cdot), \ x\in Q,$ be  independent copies of Brownian motion, then  the SDE initial value problem
\begin{multline} \label{A4} 
d\phi(x,t) \ = \ -\frac{\pa}{\pa\phi(x,t)}\sum_{x'\in Q} \frac{1}{2}\{V(\na\phi(x',t))+m^2\phi(x',t)^2/2\} \ dt +dB(x,t) \\
 {\rm for \ } x\in Q, t>0,  \qquad  {\rm with \ }\phi(x,0) \ = \ 0  \ {\rm for \ }   x\in Q, 
\end{multline}
has a unique periodic solution $\phi(x,t), \ x\in Q,t>0$, which is continuous in $t\ge 0$ with probability $1$.  We denote  the function $\phi$ corresponding to a particular realization $\tilde{\om}$ of the white noise process $dB(\cdot,\cdot)$ as $\phi(\tilde{\om})$. Let $(\Om_{Q,{\rm Mal}}, \mathcal{F}_{Q,{\rm Mal}}, P_{Q,{\rm Mal}})$ be the Malliavin probability space associated with the Brownian motions $ B(x,\cdot), \ x\in Q$. We denote the Malliavin derivative of a function $G: \Om_{Q,{\rm Mal}}\ra\C$ at a point $\tilde{\om}\in \Om_{Q,{\rm Mal}}$ by $D_{\rm Mal}G(x,t;\tilde{\om}), \ x\in Q,t>0$. It is well known (see \cite{ct} Theorem 5.4) that the Poincar\'{e} inequality (\ref{J1})  holds for $(\Om_{Q,{\rm Mal}}, \mathcal{F}_{Q,{\rm Mal}}, P_{Q,{\rm Mal}})$ with constant $K_{\tilde{P}}=1$.  Thus we have that
\be \label{B4}
 {\rm Var}[ G(\cdot)] \ \le \  \langle \ \|D_{\rm Mal}G(\cdot;\tilde{\om})\|_2^2 \ \rangle_{\Om_{Q,{\rm Mal}}}
\ee
 where $\|\cdot\|_2$ is the Euclidean norm in $L^2(Q\times\R^+)$. 
 
 Let $\phi:\Z^d\times\R\ra\R$ be continuous and consider the terminal value problem for the backwards in time parabolic PDE
\begin{eqnarray} \label{C4}
\frac{\pa u(y,s)}{\pa s}  \ &=&  \ \frac{1}{2}\na^*V''(\na\phi(y,s))\na u(y,s), \quad    s<t,y\in \Z^d, \\
 u(y,t) \ &=& \ u_0(y),  \quad  y\in \Z^d, \nonumber
\end{eqnarray}
with solution
\be \label{D4}
u(y,s) \ = \ \sum_{x\in \Z^d} G(y,s;x,t,\phi) u_0(x) \ , \quad  s\le t,y\in \Z^d.
\ee
It is easy to see that if $u_0(\cdot)\in L^2(\Z^d)$ then $u(\cdot,s)\in L^2(\Z^d)$ for $s\le t$ and $\|u(\cdot,s)\|_{L^2(\Z^d)}\le \|u_0(\cdot)\|_{L^2(\Z^d)}$.
The function $G$ satisfies
\be \label{E4}
\sum_{x\in \Z^d} G(y,s;x,t,\phi) \ = \ 1  \ {\rm for \ } y\in \Z^d, \quad \sum_{y\in \Z^d}G(y,s;x,t,\phi) \ = \ 1 \ {\rm for \ } x\in \Z^d, 
\ee
and is non-negative if $V''(\cdot)$ is diagonal.  In that case the function $x\ra G(y,s;x,t,\phi), \ x\in \Z^d,$ is the pdf for the position at time $t$ of a continuous time random walk started at $y$ at time $s$. If $\phi:Q\times\R\ra\R$ is periodic we can extend it to a periodic function $\phi:\Z^d\times\R\ra\R$. Let $u_0:Q\ra\R$ be periodic and extend it to a periodic function $u_0:\Z^d\ra\R$. Then the solution to the periodic terminal value problem (\ref{C4}) is given by
\be \label{F4}
u(y,s) \ = \ \sum_{x\in Q} G_Q(y,s;x,t,\phi) u_0(x) \ , \quad  s\le t,y\in Q,
\ee
where $G_Q$ is the periodic Green's function
\be \label{G4}
G_Q(y,s;x,t) \ = \ \sum_{n\in\Z^d} G(y,s;x+Ln,t,\phi) \ , \quad s\le t, \ x,y\in\Z^d.
\ee
 It was shown in  \cite{cf1} that the Malliavin derivative of $\phi(x,t,\tilde{\om}), \ x\in Q,t>0,$ is  given by the formula
\begin{eqnarray} \label{H4}
D_{\rm Mal}\phi(y,s;x,t,\tilde{\om}) \ &=& \  e^{-m^2(t-s)/2}G_Q(y,s;x,t,\phi(\tilde{\om})) \quad {\rm for \ } 0<s<t, \\
D_{\rm Mal}\phi(y,s;x,t,\tilde{\om}) \ &=& \  0 \quad {\rm for \ } s>t, \quad y\in Q. \nonumber
\end{eqnarray}

The solution to (\ref{C4}) can be written in a perturbation expansion by setting $V''(z)=\La [I_d-\tilde{\mathbf{b}}_V(z)], \ z\in\R^d,$  where $0\le \tilde{\mathbf{b}}_V(\cdot)\le (1-\la/\La)I_d$ in the quadratic form sense.  Then
\be \label{I4}
u(y,s) \ = \ \sum_{n=0}^\infty u_n(y,s), \quad y\in \Z^d,s<t,
\ee
where $u_0(y,s)$ is the solution to the terminal value problem
\begin{eqnarray} \label{J4}
\frac{\pa u_0(y,s)}{\pa s}  \ &=&  \ \frac{\La}{2}\na^*\na u_0(y,s), \quad    s<t,y\in \Z^d, \\
 u_0(y,t) \ &=& \ u_0(y),  \quad  y\in \Z^d, \nonumber
\end{eqnarray}
and the $u_n(y,s), \ n=1,2,..,$ solutions to the terminal value problems
\begin{multline} \label{K4}
\frac{\pa u_n(y,s)}{\pa s}  \ =  \ \frac{\La}{2}\left[\na^*\na u_n(y,s) - \na^*\tilde{\mathbf{b}}_V(\na\phi(y,s))\na u_{n-1}(y,s)\right], \quad    s<t,y\in \Z^d,\\
 u_n(y,t) \ = \ 0,  \quad  y\in \Z^d. 
\end{multline}
It follows from (\ref{I3}) that
\be \label{L4}
u_0(y,s) \ = \ \sum_{z\in\Z^d}  G\big(y-z,\La(t-s)/2\big) u_0(z), \quad y\in \Z^d,s<t.
\ee 
Similarly we have that for $n\ge 1$,
\be \label{M4}
u_n(z,r) \ = \ \sum_{y\in\Z^d}\int_{0}^{t-r} ds \  \na  G\big(y,\La s/2\big) \tilde{\mathbf{b}}_V(\na\phi(z+y,r+s))\na u_{n-1}(z+y,r+s)\, \quad z\in \Z^d,r<t.
\ee
If we set $u_0$ in (\ref{C4}) to be given by $u_0(y)=\del(y-x), \ y\in\Z^d,$ then the perturbation expansion (\ref{I4}) yields a perturbation expansion for the Green's function,
\be \label{N4}
G(y,s;x,t,\phi) \ = \ \sum_{n=0}^\infty G_n(y,s;x,t,\phi) \ ,
\ee
where the $G_n$ are multilinear in $\tilde{\mathbf{b}}_V$ of degree $n$.   By choosing $u_0(y)=\sum_{n\in\Z^d} \del(y-x-nL), \ y\in\Z^d,$ we obtain a similar perturbation expansion for the periodic Green's function
\be \label{O4}
G_Q(y,s;x,t,\phi) \ = \ \sum_{n=0}^\infty G_{n,Q}(y,s;x,t,\phi) \ .
\ee

Next we consider the inhomogeneous problem
\begin{eqnarray} \label{P4}
\frac{\pa u(y,s)}{\pa s}  \ &=&  \  \frac{1}{2}\na^*V''(\na\phi(y,s))\na u(y,s)-f(y,s), \quad    s\in\R,y\in \Z^d, \\
 \lim_{s\ra +\infty} u(y,s) \ &=& \ 0,  \quad  y\in \Z^d. \nonumber
\end{eqnarray}
Let $f:\Z^d\times\R\ra\C$ be a continuous function such that  $\int_{-\infty}^\infty  dt \ \|f(\cdot,t)\|_{L^2(\Z^d)}<\infty$.  From  Duhamel's formula we see that the solution to (\ref{P4}) is given by
\be \label{Q4}
u(y,s) \ = \ \sum_{x\in \Z^d} \int_s^\infty dt \ G(y,s;x,t,\phi) f(x,t) \ , \quad  s\in\R,y\in \Z^d.
\ee
We can similarly consider the inhomogeneous periodic problem where $f:Q\times\R\ra\C$ is assumed periodic and we extend it to a periodic function $f:\Z^d\times\R\ra\C$. The solution to (\ref{P4}) is then periodic and is  given by the formula
\be \label{R4}
u(y,s) \ = \ \sum_{x\in Q} \int_s^\infty dt \ G_Q(y,s;x,t,\phi) f(x,t) \ , \quad  s\in\R, y\in Q.
\ee
\begin{lem}
Assume $g:\Z^d\times\R\ra\C$ is in  $ L^2(\Z^d\times\R)$  and define for $m>0$ the function  $v:\Z^d\times\R\ra\C$  by
\be \label{S4}
v(y,s) \ = \ \sum_{x\in \Z^d} \int_s^\infty dt \ e^{-m^2(t-s)/2}G(y,s;x,t,\phi) g(x,t) \ , \quad  s\in\R,y\in \Z^d.
\ee
Then $v$ is also in $L^2(\Z^d\times\R)$ and $\|v(\cdot,\cdot)\|_{L^2(\Z^d\times\R)}\le 2m^{-2}\|g(\cdot,\cdot)\|_{L^2(\Z^d\times\R)}$. Corresponding to the perturbation expansion  (\ref{N4}) the function $v$ can be written as a sum
\be \label{W4}
v \ = \ \sum_{n=0}^\infty v_n \quad {\rm where \ } \|v_n(\cdot,\cdot)\|_{L^2(\Z^d\times\R)}\le \frac{1}{m^2}(1-\la/\La)^n \|g(\cdot,\cdot)\|_{L^2(\Z^d\times\R)} \ .
\ee
\end{lem}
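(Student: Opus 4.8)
The plan is to exploit the two structural facts that are already available: the representation \eqref{Q4} of the solution to the inhomogeneous problem \eqref{P4} via the Green's function $G(y,s;x,t,\phi)$, and the $L^2$ contraction property of the evolution \eqref{C4} recorded just after \eqref{D4}. The point is that $v$ defined in \eqref{S4} is precisely the solution to the inhomogeneous problem \eqref{P4} with the mass-shifted diffusion, i.e.
\[
\frac{\pa v(y,s)}{\pa s} \ = \ \frac{1}{2}\na^*V''(\na\phi(y,s))\na v(y,s) - \frac{m^2}{2}v(y,s) - g(y,s), \quad s\in\R,\ y\in\Z^d,
\]
with $v(y,s)\to 0$ as $s\to+\infty$. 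To see the $L^2$ bound, first note that by the semigroup property of $G$ the kernel $e^{-m^2(t-s)/2}G(y,s;x,t,\phi)$ is again a sub-Markovian propagator, so the operator $u_0(\cdot)\mapsto \sum_x e^{-m^2(t-s)/2}G(\cdot,s;x,t,\phi)u_0(x)$ is a contraction on $L^2(\Z^d)$ with norm at most $e^{-m^2(t-s)/2}$. Then from \eqref{S4} and Minkowski's integral inequality,
\[
\|v(\cdot,s)\|_{L^2(\Z^d)} \ \le \ \int_s^\infty e^{-m^2(t-s)/2}\|g(\cdot,t)\|_{L^2(\Z^d)}\,dt,
\]
which is a convolution of $L^2(\R)$ functions with an $L^1(\R^+)$ kernel of mass $2/m^2$; Young's inequality gives $\|v\|_{L^2(\Z^d\times\R)}\le (2/m^2)\|g\|_{L^2(\Z^d\times\R)}$. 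Alternatively, the same bound follows by multiplying the PDE above by $\bar v$, summing over $y$, integrating over $s$, using ellipticity \eqref{D1} to drop the nonnegative term $\tfrac12[\na\bar v]^*V''(\na\phi)\na v$, and estimating $\iint |g\,\bar v|\le \tfrac{m^2}{2}\|v\|_2^2+\tfrac{1}{2m^2}\|g\|_2^2$.

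For the decomposition \eqref{W4}, I would feed the perturbation expansion \eqref{N4}, $G=\sum_n G_n$, into \eqref{S4} and set
\[
v_n(y,s) \ = \ \sum_{x\in\Z^d}\int_s^\infty dt\ e^{-m^2(t-s)/2}G_n(y,s;x,t,\phi)\,g(x,t).
\]
Equivalently, $v_n$ is obtained from the Duhamel/iteration scheme \eqref{J4}--\eqref{M4} applied to the mass-shifted free evolution: $v_0$ solves the constant-coefficient mass-$m$ heat equation with source $g$, and $v_n$ for $n\ge1$ solves the same equation with source $\tfrac{\La}{2}\na^*\tilde{\mathbf b}_V(\na\phi)\na v_{n-1}$ and zero terminal data at $+\infty$. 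The bound on $\|v_n\|_{L^2(\Z^d\times\R)}$ then comes by induction: the mass-$m$ constant-coefficient propagator $e^{-m^2(t-s)/2}G(y-z,\La(t-s)/2)$ is again a convolution kernel in time of $L^1$-norm $2/m^2$ acting by contractions on $L^2(\Z^d)$ in space, and the gradient factor $\na^*\tilde{\mathbf b}_V(\na\phi)\na$ is bounded on $L^2(\Z^d)$ with norm at most $4(1-\la/\La)$ — more efficiently, one uses the energy identity as above to see that passing from $v_{n-1}$ to $v_n$ costs exactly a factor $(1-\la/\La)$ in the $L^2(\Z^d\times\R)$ norm, since $\|\tilde{\mathbf b}_V\|\le 1-\la/\La$ and the free mass-$m$ resolvent $(\tfrac{m^2}{2}-\tfrac{\La}{2}\Delta)^{-1}\na^*$ has the appropriate $L^2\to L^2$ norm. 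Carrying out the energy estimate at the level of the $n$-th iterate gives $\|v_n\|_{L^2(\Z^d\times\R)}\le \tfrac{1}{m^2}(1-\la/\La)^n\|g\|_{L^2(\Z^d\times\R)}$, and the extra factor $2$ present in the bound on the full $\|v\|_2$ is absorbed into the geometric series $\sum_n (1-\la/\La)^n$.

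The main obstacle, as I see it, is not any single estimate but making the bookkeeping clean: one must verify that term-by-term application of \eqref{N4} to \eqref{S4} is legitimate (the series converges in $L^2(\Z^d\times\R)$, which follows once the per-term bound is in hand, so there is a mild chicken-and-egg that is resolved by first working with the truncated sums $\sum_{n\le N}G_n$ and passing to the limit), and that the gradient operator $\na^*\tilde{\mathbf b}_V(\na\phi)\na$ composed with the mass-$m$ free resolvent really does have operator norm $\le 1-\la/\La$ on $L^2(\Z^d)$ uniformly in the (random, time-dependent) field $\phi$ — this is where the uniform ellipticity \eqref{D1}, equivalently $0\le \tilde{\mathbf b}_V\le(1-\la/\La)I_d$, is used, and it is cleanest to phrase it through the energy identity rather than through kernel bounds. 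Everything here is independent of the cube $Q$, so the same argument with $G$ replaced by the periodic Green's function $G_Q$ of \eqref{G4} gives the periodic analogue with identical constants, which is what is needed for the subsequent passage $Q\to\Z^d$.
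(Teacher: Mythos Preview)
Your proposal is correct and follows essentially the same route as the paper: identify the PDE \eqref{T4} satisfied by $v$, obtain the $L^2$ bound by an energy argument (the paper multiplies by $\bar v$ and integrates, using your Minkowski bound only to kill the boundary terms at $\pm T$), and then for the $v_n$ run the same energy estimate on the iterates \eqref{X4}--\eqref{Y4}, tracking $\|\na v_n\|_2$ so that each step costs a factor $(1-\la/\La)$. The only point where you are slightly less explicit than the paper is that the geometric decay comes through the \emph{gradient} norms via the pair of inequalities \eqref{Z4}, \eqref{AA4}, not directly through $\|v_n\|_2$; your phrase ``the energy identity at the level of the $n$-th iterate'' is exactly this, but it is worth writing the two inequalities down rather than invoking an operator-norm statement about $(\tfrac{m^2}{2}-\tfrac{\La}{2}\Delta)^{-1}\na^*$.
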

 \begin{proof}
 It follows from (\ref{P4}) that $v$ satisfies
 \be \label{T4}
\left[\frac{\pa}{\pa s}  -\frac{m^2}{2}\right] v(y,s) \ =  \  \frac{1}{2}\na^*V''(\na\phi(y,s))\na v(y,s)-g(y,s).
\ee
Multiplying (\ref{T4}) by $\overline{v(y,s)}$, summing over $y\in\Z^d$ and integrating with respect to $s$ in the interval $-T\le s\le T$ we see  that
\be \label{U4}
\frac{m^2}{2}\sum_{y\in\Z^d} \int_{-T}^T ds \ |v(y,s)|^2 \ \le \ \frac{1}{2}\sum_{y\in\Z^d} \left\{|v(y,T)|^2-|v(y,-T)|^2\right\}+ \Re\left[ \sum_{y\in\Z^d} \int_{-T}^T ds \  \overline{v(y,s)} g(y,s) \ \right] \ .
\ee
Now from (\ref{S4}) we have that for $s\in\R$,
\begin{multline} \label{V4}
\|v(\cdot,s)\|_{L^2(\Z^d)} \ \le  \int_{0}^\infty e^{-m^2t/2}\|g(\cdot,s+t)\|_{L^2(\Z^d)} \  dt \\
\le \ \frac{1}{m} \left[ \int_0^\infty dt \ \|g(\cdot,s+t)\|^2_{L^2(\Z^d)} \ \right]^{1/2} \ .
\end{multline}
Since $g\in  L^2(\Z^d\times\R)$ it follows that the last expression on the RHS of (\ref{V4}) vanishes as $s\ra\infty$, whence $\lim_{T\ra\infty} \|v(\cdot,T)\|_{L^2(\Z^d)}=0$. We similarly conclude that 
$\lim_{T\ra\infty} \|v(\cdot,-T)\|_{L^2(\Z^d)}=0$.  Letting $T\ra\infty$ in (\ref{U4}) and using the Schwarz inequality we see that  if $g\in L^2(\Z^d\times\R)$ then $v$ is also in $L^2(\Z^d\times\R)$  and their norms are related as stated. 

To prove (\ref{W4}) we observe that
\be \label{X4}
\left[\frac{\pa}{\pa s}  -\frac{m^2}{2}\right] v_0(y,s) \ =  \  \frac{\La}{2}\na^*\na v_0(y,s)-g(y,s),
\ee
and that for $n\ge1$,
\be \label{Y4}
\left[\frac{\pa}{\pa s}  -\frac{m^2}{2}\right] v_n(y,s) \ =  \  \frac{\La}{2}\left[\na^*\na v_n(y,s)- \na^*\tilde{\mathbf{b}}_V(\na\phi(y,s))\na v_{n-1}(y,s)\right] \ .
\ee
Arguing as in the previous paragraph we see from (\ref{X4})  that
\be \label{Z4}
\frac{m^2}{4} \|v_0(\cdot,\cdot)\|^2_{L^2(\Z^d\times\R)}+ \frac{\La}{2}\|\na v_0(\cdot,\cdot)\|^2_{L^2(\Z^d\times\R,\C^d)} \ \le \ \frac{1}{m^2} \|g(\cdot,\cdot)\|^2_{L^2(\Z^d\times\R)} \ .
\ee
Similarly we have from (\ref{Y4}) that for $n\ge 1$,
\be \label{AA4}
\frac{m^2}{2} \|v_n(\cdot,\cdot)\|^2_{L^2(\Z^d\times\R)}+ \frac{\La}{4}\|\na v_n(\cdot,\cdot)\|^2_{L^2(\Z^d\times\R,\C^d)} \ \le  \  \frac{\La}{4}(1-\la/\La)^2\|\na v_{n-1}(\cdot,\cdot)\|^2_{L^2(\Z^d\times\R,\C^d)}  \  .
\ee
The inequality in (\ref{W4}) easily follows from (\ref{Z4}), (\ref{AA4}). 
 \end{proof}
 \begin{remark}
The result of Lemma 4.1 holds if $\Z^d$ is replaced by a periodic cube $Q$ with the Green's function $G$ replaced by the periodic Green's function $G_Q$ of (\ref{G4}) with perturbation expansion (\ref{O4}).  
 \end{remark}
 In \cite{cf1} we considered vector valued functions $F(\phi)$ of fields $\phi:\Z^d\times\R\ra\R$ and defined the {\it field derivative}  of $F$ at $\phi$ to be the function $d F(\cdot,\cdot;\phi)$ with domain $\Z^d\times\R$ which satisfies
 \be \label{AB4}
\lim_{\ve\ra 0} [F(\phi+\ve h)-F(\phi)]/\ve \ = \ \sum_{y\in\Z^d}  \int_{-\infty}^\infty ds  \ dF(y,s;\phi) h(y,s)
 \ee
 for all continuous functions $h:\Z^d\times\R\ra\R$ with compact support. 
Let  $\tilde{\mathbf{b}}:\R\ra\R^{d(d+1)/2}$ be a $C^1$ function taking values in the symmetric $d\times d$ matrices such that $\|\tilde{\mathbf{b}}\|_\infty+\|D\tilde{\mathbf{b}}\|_\infty<\infty$ and define $\mathbf{b}(\cdot)$ in (\ref{K3}) as a function of fields $\phi:\Z^d\times\R\ra\R$ by setting  $\mathbf{b}(\phi)=\tilde{\mathbf{b}}(\phi(0,0))$. It follows from (\ref{J3}), (\ref{K3}) that for $v\in\C^d$ one has  $T_{k,\Im\xi,\Re\eta} \ g(\Re\xi,\Im\eta,\phi)v \ = \  F(\phi)$ where
\begin{multline} \label{AC4}
F(\phi) \ = \  \La^{k-1}\left\{\prod_{j=0}^{k-1} \sum_{x_j\in\Z^{d}}\int_{-\infty}^\infty  dt_j \right\} \  \exp[-\eta (t_{k-1}-t_0)-i(x_{k-1}-x_0)\cdot\xi] \  g(x_0,t_0)  \\
\prod_{j=1}^{k-1} \left\{\nabla\nabla^* G(x_j-x_{j-1},\La (t_j-t_{j-1}))\right\}^* 
\mathcal{P}\tilde{\mathbf{b}}(\phi(x_0,-t_0)) \
\mathcal{P}\tilde{\mathbf{b}}(\phi(x_1,-t_1))\cdots \mathcal{P}\tilde{\mathbf{b}}(\phi(x_{k-1},-t_{k-1}))v \ .
\end{multline}
In (\ref{AC4}) we have extended the domain of the Green's function $G(x,t)$ defined by  (\ref{I3}) for $t\ge 0$ to $t<0$ by setting $G(x,t)=0, \ t<0$. The operator $I-\mathcal{P}$ is now any linear operator taking $d\times d$ symmetric matrix valued functions $\mathbf{b}(\cdot)$ of $\phi:\Z^d\times\R\ra\R$ to a constant  matrix which has the property that $\|(I-\mathcal{P})\mathbf{b}\|_\infty\le  \sup_{\phi:\Z^d\times\R\ra\R}\|\mathbf{b}(\phi)\|_\infty$.
We can see from (\ref{I*3}) that there exists $C_1>0$ depending only on $d$ such that if $(\xi,\eta)$ satisfies the inequality   
\be \label{AD4}
0<\Re\eta<\La, \quad |\Im\xi|<  C_1\sqrt{\Re\eta/\La} \ ,
\ee
and $g\in L^1(\Z^d\times\R,\C^d\otimes\C^d)$ then $F(\phi)\in\C^d$ is bounded by
\be \label{AD*4} 
|F(\phi)|\le C\La^{k-1}\|\tilde{\mathbf{b}}\|^{k}_\infty[\Re\eta]^{-(k-1)}\|g\|_1|v|,
\ee
where the constant $C$ depends only on $d,k$.   Furthermore $F$ is differentiable in the sense of (\ref{AB4}) and the field derivative is given by the formula
\begin{multline} \label{AE4}
d F(y,s;\phi) \ = \  \La^{k-1}\left\{\prod_{j=0}^{k-1} \sum_{x_j\in\Z^{d}}\int_{-\infty}^\infty  dt_j \right\} \  \exp[-\eta (t_{k-1}-t_0)-i(x_{k-1}-x_0)\cdot\xi] \  g(x_0,t_0)  \\
\prod_{j=1}^{k-1} \left\{\nabla\nabla^* G(x_j-x_{j-1},\La (t_j-t_{j-1}))\right\}^* \\
\bigg[\del(x_0-y,t_0+s) 
D\tilde{\mathbf{b}}(\phi(x_0,-t_0)) \
\mathcal{P}\tilde{\mathbf{b}}(\phi(x_1,-t_1))\cdots \mathcal{P}\tilde{\mathbf{b}}(\phi(x_{k-1},-t_{k-1}))v+\cdots + \\
\del(x_{k-1}-y,t_{k-1}+s) 
\tilde{\mathbf{b}}(\phi(x_0,-t_0)) \
\tilde{\mathbf{b}}(\phi(x_1,-t_1))\cdots D\tilde{\mathbf{b}}(\phi(x_{k-1},-t_{k-1}))v \ \bigg]
\end{multline}
for $(y,s)\in\Z^d\times\R$, where $\del(x,t)=\del(x)\del(t)$ is the product of the Kronecker and Dirac delta functions.  If $g$ is also in $ L^2(\Z^d\times\R,\C^d\otimes\C^d)$ then $dF(\cdot,\cdot;\phi)\in L^2(\Z^d\times\R,\C^d)$ and from (\ref{I*3}) we see that
\begin{multline} \label{AF4}
\|dF(\cdot,\cdot;\phi)\|_2  \ \le \ C\La^{k-1}\|D\tilde{\mathbf{b}}\|_\infty \|\tilde{\mathbf{b}}\|^{k-1}_\infty[\Re\eta]^{-(k-1)} \\
\left[ \ \|g\|_2+\Re\eta \  \|g\|_1\left\{\sum_{x\in\Z^d}\int_0^\infty dt \  |\na\na^*G(x,\La t)|^2 \right\}^{1/2}\ \right]|v| \ ,
\end{multline} 
where $C$ depends only on $d,k$. 
\begin{lem}
Let $(\Om,\mathcal{F},P_m)$ with $m>0$ be the environment of massive fields $\phi:\Z^d\times\R\ra\R$ defined by (\ref{O1}), (\ref{P1}) and $g:\Z^d\times\R\ra\C^d\otimes\C^d$ a continuous function of compact support.  Then there exists $C_1>0$ depending only on $d$ such that if $(\xi,\eta)$ lies in the region (\ref{AD4}) the operator of (\ref{K3}) satisfies the inequality
\begin{multline} \label{AG4}
\left\langle \ \left|T_{k,\Im\xi,\Re\eta} \ g(\Re\xi,\Im\eta,\cdot) v\right|^2 \   \right\rangle \ \le  \\
\left\langle \ \sum_{y\in\Z^d}\int_{-\infty}^\infty ds \ \left|\sum_{x\in \Z^d} \int_s^\infty dt \ e^{-m^2(t-s)/2}G(y,s;x,t,\phi) d F(x,t;\phi)\right|^2 \ \right\rangle \ ,
\end{multline}
where $G$ is the Green's function defined by (\ref{D4}), and $d F$ is the field derivative (\ref{AE4}). 
\end{lem}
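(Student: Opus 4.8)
The plan is to exploit the fact that, by (\ref{O1}), the massive field $\phi(\cdot,\cdot)$ is a translation invariant functional of the white noise process $\tilde{\om}(\cdot,\cdot)=dB(\cdot,\cdot)$, for which the Poincar\'e inequality (\ref{J1}) holds in the Malliavin form (\ref{B4}) with constant $1$. I would view $F(\phi)=T_{k,\Im\xi,\Re\eta}\,g(\Re\xi,\Im\eta,\phi)v$, written out in (\ref{AC4}), as a functional of $\tilde{\om}$. Since $g$ has compact support, the bounds (\ref{AD*4}) and (\ref{AF4}) together with the Gaussian-type decay (\ref{I*3}) of $\na\na^*G$ show that $F$ is bounded and that its field derivative $dF(\cdot,\cdot;\phi)$ of (\ref{AE4}) lies in $L^2(\Z^d\times\R,\C^d)$, so that $F$ lies in the domain of the Malliavin derivative.

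Next I would compute $D_{\rm Mal}F$ by the chain rule. A perturbation of $\tilde{\om}$ produces a field perturbation of $\phi$ whose value at $(x,t)$ is the pairing of $D_{\rm Mal}\phi(\cdot,\cdot;x,t,\tilde{\om})$ with the noise perturbation, so that, by the definition (\ref{AB4}) of the field derivative,
\be \label{cr1}
D_{\rm Mal}F(y,s;\tilde{\om}) \ = \ \sum_{x\in\Z^d}\int_{-\infty}^\infty dt \ dF(x,t;\phi)\,D_{\rm Mal}\phi(y,s;x,t,\tilde{\om}) \ .
\ee
Inserting the formula (\ref{H4}) for $D_{\rm Mal}\phi$ (equivalently (\ref{T1})), which vanishes for $s>t$ and equals $e^{-m^2(t-s)/2}G(y,s;x,t,\phi)$ for $s<t$, restricts the $t$ integral to $t>s$ and gives
\be \label{cr2}
D_{\rm Mal}F(y,s;\tilde{\om}) \ = \ \sum_{x\in\Z^d}\int_s^\infty dt \ e^{-m^2(t-s)/2}G(y,s;x,t,\phi)\,dF(x,t;\phi) \ ,
\ee
which is exactly the inner expression on the right-hand side of (\ref{AG4}).

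Because the operator $T_{k,\Im\xi,\Re\eta}$ of (\ref{K3}) carries the projection $\mathcal{P}$ orthogonal to the constants and the measure $P_m$ is translation invariant, $\langle F\rangle=0$, hence $\langle |F|^2\rangle={\rm Var}[F]$. Applying the Poincar\'e inequality (\ref{B4}) to $F$ and inserting (\ref{cr2}) then yields exactly (\ref{AG4}). To carry this out rigorously I would first work with the finite periodic SDE system (\ref{A4}) on a cube $Q$, where $\phi(\tilde{\om})$ is a genuine functional of finitely many Brownian motions, (\ref{B4}) holds, the Malliavin derivative of $\phi$ is given explicitly by (\ref{H4}), and $G$ is replaced by the periodic Green's function $G_Q$ of (\ref{G4}); the resulting periodic form of (\ref{AG4}) is then passed to the limit $Q\ra\Z^d$, using that $G_Q$ converges pointwise to $G$, the periodic field measure converges to $P_m$, $F$ and $dF$ converge to their $\Z^d$ counterparts, and the estimates (\ref{AD*4}), (\ref{AF4}) and their periodic analogues (cf.\ the Remark following Lemma 4.1) are uniform in $Q$.

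The hard part will be the rigorous justification of the chain rule (\ref{cr1}), i.e.\ showing that $F$ (and its periodic version) is Malliavin differentiable with the derivative obtained by differentiating under the integral sign. This is delicate because $F$ is defined by an iterated integral over the $k$ time variables $t_0,\dots,t_{k-1}$, so the uniform integrability needed must be extracted from the exponential factors $e^{-\eta(t_j-t_{j-1})}$ coming from $\Re\eta>0$, from the decay estimate (\ref{I*3}) for $\na\na^*G$, and from the $L^2$-contractivity of the backward parabolic equation (\ref{C4}). Once this differentiability is established, the identity (\ref{cr2}) and the limiting argument are routine.
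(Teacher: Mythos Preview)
Your outline matches the paper's approach --- Poincar\'e on the Malliavin space for the periodic SDE (\ref{A4}), chain rule via (\ref{H4}), then $Q\to\Z^d$ --- but you omit one limiting step and misplace the main technical effort. The Malliavin space in (\ref{B4}) is attached to the process (\ref{A4}) started deterministically at $\phi(\cdot,0)=0$, not to the stationary periodic process. The paper therefore works with the shift $\phi_T(x,t)=\phi(x,T+t)$, applies (\ref{B4}) to $F(\phi_T)$ to get (\ref{AM4}), and then takes an \emph{intermediate} limit $T\to\infty$ via (\ref{AJ4}) (proved in Lemma~A.1) to reach the stationary periodic expectation $\langle\cdot\rangle_{\Om_{Q,m}}$, obtaining (\ref{AP4}); only after this does $Q\to\Z^d$ via (\ref{AQ4}) give (\ref{AG4}). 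Across the $T\to\infty$ limit the meaning of $\mathcal{P}$ changes from Malliavin expectation to stationary expectation (see (\ref{AK4})), so this step is not a formality you can absorb into ``the periodic field measure converges to $P_m$.''

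The chain rule you flag as the hard part is simply asserted in the paper as (\ref{AH4}), directly from (\ref{H4}). The actual technical work lies in passing the two limits on the right-hand side of (\ref{AM4}), and the device used is not a uniform-integrability argument for differentiating under the integral but the perturbation expansion (\ref{O4}) of $G_Q$: for each truncation $\sum_{n=0}^N G_{n,Q}$ the inner expression becomes an explicit bounded functional of finitely many field values, so (\ref{AJ4}) and (\ref{AQ4}) apply directly (see (\ref{AO4})), while the periodic version of Lemma~4.1 shows the tail $\sum_{n>N}$ is uniformly small in $L^2$, allowing $N\to\infty$ after the limits.
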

\begin{proof}
It follows from (\ref{AD*4}) that the LHS of (\ref{AG4}) is the expectation of a bounded function. From Lemma 4.1 and (\ref{AF4}) we see that the RHS is also the expectation of a bounded function.  To prove (\ref{AG4}) we use the Poincar\'{e} inequality (\ref{B4}) and the formula (\ref{H4}) for the Malliavin derivative.  Thus let $Q\subset\Z^d$ be the periodic cube with side of length $L$ and $\phi(x,t),  \ x\in Q,t\ge 0,$ the solution to the initial value problem (\ref{A4}).  For $T>0$ we denote by $\phi_T$ the periodic field $\phi_T:Q\times\R\ra\R$ defined by $\phi_T(x,t)=\phi(x,T+t), \ x\in Q,t\ge -T$ where $\phi$ is the solution to (\ref{A4}), and $\phi_T(x,t)=0, \ x\in Q,t<-T$.  We extend the field $\phi_T$ to  a periodic field $\phi_T:\Z^d\times\R\ra\R$. From (\ref{H4}) we have that for the function $F$ of (\ref{AC4}) if $y\in Q$ and $s>-T$ then
\be \label{AH4}
D_{\rm Mal} F(y,T+s;\phi_T) \ = \ \sum_{x\in Q} \int_s^\infty dt \ e^{-m^2(t-s)/2}G_Q(y,s;x,t,\phi_T) d F_Q(x,t;\phi_T) \ ,
\ee
where $dF_Q$ is given in terms of (\ref{AE4}) by
\be \label{AN4}
dF_Q(x,t;\phi) \ = \ \sum_{n\in\Z^d} dF(x+Ln,t;\phi) \ .
\ee
It is easy to see from (\ref{AE4}) that $\|dF_Q(\cdot,\cdot;\phi)\|_\infty\le C$ for some constant independent of $\phi$ so the RHS of (\ref{AH4}) is bounded. 

The invariant measure associated with the Markov process defined by the SDE (\ref{A4})  is given by the formula
\be \label{AI4}
\exp \left[ - \sum_{x\in Q} V\left( \na\phi(x)\right)+m^2\phi(x)^2/2 \right] \prod_{x\in Q} d\phi(x)/{\rm normalization}.
\ee
 We denote the probability space for the corresponding stationary process $\phi(x,t), \ x\in Q,t\in\R$ by $(\Om_Q,\mathcal{F}_Q,P_{Q,m})$ and expectation with respect to the measure $P_{Q,m}$ by $\langle\cdot\rangle_{\Om_{Q,m}}$. Evidently $(\Om_Q,\mathcal{F}_Q,P_{Q,m})$is invariant with respect to the translation operators $\tau_{x,t}:\Om_Q\ra\Om_Q, \ x\in\Z^d,t\in\R$. Our first goal will be to obtain a version of the inequality (\ref{AG4}) for the operator $T_{k,\Im\xi,\Re\eta}$ of (\ref{K3}) when the random environment is given by $(\Om_Q,\mathcal{F}_Q,P_{Q,m})$. To do this we use the fact (see Appendix A for a proof) that for any $N\ge 1$, continuous bounded function $f:\R^{N}\ra\C$, and $(x_1,t_1),...,(x_N,t_N)\in Q\times\R$,
\be \label{AJ4}
\lim_{T\ra\infty} \left\langle \ f\big( \ \phi_T(x_1,t_1),....,\phi_T(x_N,t_N) \ \big) \ \right\rangle_{\Om_{Q,{\rm Mal}}} \ = \ \left\langle \ f\big( \ \phi(x_1,t_1),....,\phi(x_N,t_N) \ \big) \ \right\rangle_{\Om_{Q,m}} \ .
\ee
It follows from (\ref{AJ4}) that 
\be \label{AK4}
\lim_{T\ra\infty} \left\langle \ v^*\prod_{j=1}^N\mathcal{P}\tilde{\mathbf{b}}(\phi_T(x_j,-t_j)) \ v \ \right\rangle_{\Om_{Q,{\rm Mal}}}  \ = \ 
  \left\langle  \ v^*\prod_{j=1}^N\mathcal{P}\tilde{\mathbf{b}}(\phi(x_j,-t_j)) \ v   \ \right\rangle_{\Om_{Q,m}} \ ,
\ee
where $I-\mathcal{P}$ on the LHS of (\ref{AK4}) denotes  expectation with respect to $\langle\cdot\rangle_{\Om_{Q,{\rm Mal}}}  $ and on the RHS expectation with respect to  $\langle\cdot\rangle_{\Om_{Q,m}}$. We conclude from (\ref{AK4}), Fubini's theorem and the dominated convergence theorem that
\be \label{AL4}
\lim_{T\ra\infty} \left\langle \ |F(\phi_T)|^2  \ \right\rangle_{\Om_{Q,{\rm Mal}}}  \ = \ 
 \left\langle  \ |F(\phi)|^2  \ \right\rangle_{\Om_{Q,m}}  \ = \ \left\langle \ \left|T_{k,\Im\xi,\Re\eta} \ g(\Re\xi,\Im\eta,\cdot) v\right|^2 \   \right\rangle_{\Om_{Q,m}} \ . 
\ee

Next we see from the Poincar\'{e} inequality (\ref{B4}) and (\ref{AH4}) that
\be \label{AM4}
\left\langle \ |F(\phi_T)|^2  \ \right\rangle_{\Om_{Q,{\rm Mal}}}  \ \le \ 
\left\langle \  \sum_{y\in\Z^d}\int_{-T}^\infty ds \ \left|\sum_{x\in \Z^d} \int_s^\infty dt \ e^{-m^2(t-s)/2}G_Q(y,s;x,t,\phi_T) d F_Q(x,t;\phi_T)\right|^2 \ \right\rangle_{\Om_{Q,{\rm Mal}}}   \ .
\ee
We assume that $L$ is sufficiently large so that the support of $g$ is contained in $Q\times\R$. It is easy to see then that $dF_Q(y,s;\phi)$ is given by the RHS of (\ref{AE4}) with $\Z^d$ replaced by $Q$ and the function $G(x,\La t)$ replaced by the corresponding periodic Green's function on $Q$. Hence $\|dF_Q(\cdot,\cdot,\phi)\|_{L^2(Q\times\R,\C^d)}$ is bounded by the periodic version of the RHS of (\ref{AF4}).  From this we see that  $\limsup_{Q\ra\Z^d}\|dF_Q(\cdot,\cdot,\phi)\|_{L^2(Q\times\R,\C^d)}<\infty$. Observe now that we can argue as in the proof of (\ref{AL4}) to conclude that if $G_{n,Q}, \ n=0,1,2,..,$ denote the terms in the perturbation expansion (\ref{O4}) and $N\ge 0$ then 
\begin{multline} \label{AO4}
 \lim_{T\ra\infty}\left\langle \   \sum_{y\in\Z^d}\int_{-T}^\infty ds \ \left|\sum_{x\in \Z^d} \int_s^\infty dt \ e^{-m^2(t-s)/2}\sum_{n=0}^NG_{n,Q}(y,s;x,t,\phi_T) d F_Q(x,t;\phi_T)\right|^2  \ \right\rangle_{\Om_{Q,{\rm Mal}}}  \\
  = \ 
 \left\langle  \  \sum_{y\in\Z^d}\int_{-\infty}^\infty ds \ \left|\sum_{x\in \Z^d} \int_s^\infty dt \ e^{-m^2(t-s)/2}\sum_{n=0}^NG_{n,Q}(y,s;x,t,\phi) d F_Q(x,t;\phi)\right|^2   \ \right\rangle_{\Om_{Q,m}}  \ .
\end{multline}
It follows from (\ref{AL4}), (\ref{AO4}) and the periodic version of Lemma 4.1 that
\begin{multline} \label{AP4}
\left\langle \ \left|T_{k,\Im\xi,\Re\eta} \ g(\Re\xi,\Im\eta,\cdot) v\right|^2 \   \right\rangle_{\Om_{Q,m}}  \ \le  \\
\left\langle \ \sum_{y\in\Z^d}\int_{-\infty}^\infty ds \ \left|\sum_{x\in \Z^d} \int_s^\infty dt \ e^{-m^2(t-s)/2}G_Q(y,s;x,t,\phi) d F_Q(x,t;\phi)\right|^2 \ \right\rangle_{\Om_{Q,m}}  \ .
\end{multline}

Finally we let $Q\ra\Z^d$ in (\ref{AP4}) to obtain (\ref{AG4}).  We denote by $\langle\cdot\rangle_{\Om_m}$ expectation with respect to the stationary process defined by (\ref{O1}), (\ref{P1}). It was proved in \cite{fs}) (see also \cite{c1}) that for any $N\ge 1$, continuous bounded function $f:\R^{N}\ra\C$, and $(x_1,t_1),...,(x_N,t_N)\in Q\times\R$,
\be \label{AQ4}
\lim_{Q\ra\Z^d} \left\langle \ f\big( \ \phi(x_1,t_1),....,\phi(x_N,t_N) \ \big) \ \right\rangle_{\Om_{Q,m}} \ = \ \left\langle \ f\big( \ \phi(x_1,t_1),....,\phi(x_N,t_N) \ \big) \ \right\rangle_{\Om_{m}} \ .
\ee
Hence we can using (\ref{AQ4}) argue as with the $T\ra\infty$ limit to conclude that (\ref{AG4}) holds for any $m>0$. 
\end{proof}
For  $\xi\in\R^d$ and $u:\Z^d\ra\C$ we denote by $\na_\xi u:\Z^d\ra\C^d$ the column vector $\na_\xi u(z)=[\na_{1,\xi}u(z),...,\na_{j,\xi}u(z)], \ z\in \Z^d,$ where $\na_{j,\xi}u(z)=e^{-i\mathbf{e}_j\cdot\xi}u(z+\mathbf{e}_j)-u(z), \ z\in \Z^d, \ j=1,..,d$. The column operator $\na_\xi$ has adjoint $\na^*_\xi$ which is a row operator. If $f\in  L^2(\Z^d\times\R,\C^d)$ and  $\xi\in\R^d,\eta\in\C$ with $\Re\eta>0$ there is a unique solution  $u(\xi,\eta,\cdot)\in L^2(\Z^d\times\R)$ to the PDE
\be \label{AR4}
\frac{1}{\La}\left[\eta-\frac{\pa}{\pa r}\right] \ u(\xi,\eta,z,r)+\na_\xi^*\na_\xi u(\xi,\eta,z,r) \ = \  \na^*_\xi f(z,r), \quad z\in \Z^d,r\in\R. 
\ee
Furthermore we have that 
\be \label{AS4}
\frac{\Re\eta}{\La}\|u(\xi,\eta,\cdot)\|^2_{L^2(\Z^d\times\R)}+\|\na_\xi u(\xi,\eta,\cdot)\|^2_{L^2(\Z^d\times\R,\C^d)} \ \le  \ \|f(\cdot)\|^2_{L^2(\Z^d\times\R,\C^d)} \ .
\ee
We also see similarly to (\ref{J3}) that $\na_\xi u(\xi,\eta,\cdot)=\tilde{T}_{\xi,\eta} f(\cdot)$ where
\be \label{AT4}
\tilde{T}_{\xi,\eta} f(z,r) \ = \   \La\int_{0}^\infty e^{-\eta s} \ ds\sum_{y\in \Z^d} \left\{\nabla\nabla^* G(y,\La s)\right\}^*\exp[-iy.\xi]  \ f(z+y,r+s) \ , \quad z\in \Z^d,r\in\R.
\ee 
 From Lemma 2.1 of \cite{cf1} we have that $u$ regarded as a function $(\xi,\eta)\ra L^2(\Z^d\times\R)$ has an analytic continuation to the region  (\ref{AD4}) with $C_1>0$  depending only on $d$. For $(\xi,\eta)$ in this region there is a constant $C_2$ depending only on $d$ such that 
\be \label{AU4}
\frac{\Re\eta}{2\La}\|u(\xi,\eta,\cdot)\|^2_{L^2(\Z^d\times\R)}+\|\na_\xi u(\xi,\eta,\cdot)\|^2_{L^2(\Z^d\times\R,\C^d)} \ \le \  \big[1+C_2|\Im\xi|^2\big/(\Re\eta/\La)\ \big] \|f(\cdot)\|^2_{L^2(\Z^d\times\R,\C^d)} \ .
\ee

For $\phi:\Z^d\times\R\ra\R$ a continuous function we  extend the corresponding Green's function $G(y,s;x,t,\phi)$ defined by (\ref{D4}) for $s\le t$ to $s>t$ by setting  $G(y,s;x,t,\phi)=0$ when $s>t$. For  $\xi\in\R^d,\eta\in\C,v\in \C^d,$ with $\Re\eta>0$ let $f_{2}(\xi,\eta,z,r,\phi), \   z\in \Z^d,r\in\R$ with range in $\C^d$ be defined by
\be \label{AV4}
f_{2}(\xi,\eta,z,r,\phi) \ = \  D\tilde{{\bf b}}(\phi(z,-r))v  \  e^{m^2r/2}G(0,0;z,-r,\phi)  \ .
\ee
It is evident from (\ref{C4}), (\ref{D4}) that $f_{2}(\xi,\eta,\cdot,\cdot,\phi)$ is in $L^2(\Z^d\times\R)$, whence we can define the function $u_{2}(\xi,\eta,z,r,\phi)$ as the solution to (\ref{AR4}) with $f(\cdot,\cdot)=f_{2}(\xi,\eta,\cdot,\cdot,\phi)$. Let $\mathbf{b}(\phi)=\tilde{\mathbf{b}}(\phi(0,0))$ and for $k=2,3,..,$ set
\be \label{AW4}
\pa_\xi F_{k}(\xi,\eta,\phi) \ = \ [\mathcal{P}T_{\xi,\eta}\mathbf{b}(\cdot)]^{k-1}v \ ,
\ee
where $T_{\xi,\eta}$ is the operator (\ref{J3}). 
We then inductively define functions $f_{k}, \ u_{k}$ for $k=3,4..,$  by the formula
\begin{multline} \label{AX4}
f_{k}(\xi,\eta,z,r,\phi) \ = \  D\tilde{{\bf b}}(\phi(z,-r))\pa_\xi F_{k-1}(\xi,\eta,\tau_{z,-r}\phi) \   e^{m^2r/2}G(0,0;z,-r,\phi)\\
+\tilde{{\bf b}}(\phi(z,-r)\na_\xi u_{k-1}(\xi,\eta,z,r,\phi) \ ,
\end{multline}
where  for $k=3,4,..,$ the function $u_{k}(\xi,\eta,z,r,\phi)$ is the solution to (\ref{AR4}) with $f(\cdot,\cdot)=f_{k}(\xi,\eta,\cdot,\cdot,\phi)$. 
The $u_{k}(\xi,\eta,z,r,\phi)$ and   $f_{k}(\xi,\eta,z,r,\phi)$  for $\xi\in\C^d$ are defined by analytic continuation from their values when $\xi\in\R^d$.
\begin{lem}
Let $G$ be the Green's function defined by (\ref{D4}) and for any $k\ge 2,v\in\C^d,$ let $dF$ be the function (\ref{AE4}). Then there exists $C_1>0$ depending only on $d$ such that for $(\xi,\eta)$ in the region (\ref{AD4}) the following identity  holds:
\begin{multline} \label{AY4}
\sum_{y\in \Z^d} \int_{-r}^\infty ds \ e^{-m^2(s+r)/2}G(z,-r;y,s,\tau_{-z,r}\phi) d F(y,s;\tau_{-z,r}\phi) \ = \\
\sum_{x\in\Z^{d}}\int_{-\infty}^\infty  dt \  g(x,t) \ e^{m^2(t-r)/2}G(0,0;x-z,r-t,\phi)D\tilde{\mathbf{b}}(\phi(x-z,r-t))\pa_\xi F_k(\xi,\eta,\tau_{x-z,r-t}\phi) \\
+
\sum_{x\in \Z^d} \int_{-\infty}^\infty dt \  g(x,t)\tilde{\mathbf{b}}(\phi(x-z,r-t))\na_\xi u_{k}(\xi,\eta,x-z,t-r,\phi) \ .
\end{multline}
\end{lem}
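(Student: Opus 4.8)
The plan is to prove the identity (\ref{AY4}) by a direct computation --- expand the explicit formula (\ref{AE4}) for $dF$ on the left-hand side, unfold the recursive definitions (\ref{AV4})--(\ref{AX4}) of $f_k,u_k$ on the right-hand side, and match the resulting pieces term by term --- which can equally be organized as an induction on $k\ge 2$. One first treats $\xi\in\R^d$ and then extends to complex $\xi$ in the region (\ref{AD4}) by analytic continuation, both sides being analytic there by Lemma 2.1 of \cite{cf1} and by the definition of $u_k,f_k$ (hence of $\pa_\xi F_k$) by continuation. Substituting (\ref{AE4}) with $\phi$ replaced by $\tau_{-z,r}\phi$ --- so that each field argument $\phi(x_j,-t_j)$ becomes $\phi(x_j-z,r-t_j)$ while the deterministic kernels $\na\na^*G(x_j-x_{j-1},\La(t_j-t_{j-1}))$ are unchanged --- the left-hand side splits into the $k$ terms indexed by $l=0,\dots,k-1$ according to which of the factors $\tilde{\mathbf b}(\phi(x_l,-t_l))$ has been differentiated. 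Recall that in (\ref{AE4}) the factors lying to the left of the differentiated one appear \emph{without} the projection $\mathcal P$ (the field derivative kills the constant $(I-\mathcal P)\mathbf b$); in particular the leftmost factor is the un-projected $\tilde{\mathbf b}(\phi(x_0-z,r-t_0))$ in every term with $l\ge 1$. In each term the Kronecker--Dirac delta $\del(x_l-y,t_l+s)$ collapses the $(y,s)$-summation to $y=x_l$, $s=-t_l$, and the translation covariance of the random Green's function, $G(z,-r;y,s,\tau_{-z,r}\phi)=G(0,0;y-z,s+r,\phi)$, turns the surviving random factor into $e^{m^2(t_l-r)/2}G(0,0;x_l-z,r-t_l,\phi)$.

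For $l=0$ the remaining sum and integral over $x_1,\dots,x_{k-1},t_1,\dots,t_{k-1}$, carrying the kernels $\{\na\na^*G(x_j-x_{j-1},\La(t_j-t_{j-1}))\}^*$ and the phases $e^{-\eta(t_j-t_{j-1})-i(x_j-x_{j-1})\cdot\xi}$, is by the kernel representation (\ref{J3}) of $T_{\xi,\eta}$ and the definition (\ref{AW4}) of $\pa_\xi F_k=[\mathcal PT_{\xi,\eta}\mathbf b]^{k-1}v$ exactly $\pa_\xi F_k(\xi,\eta,\tau_{x_0-z,r-t_0}\phi)$. Relabelling $(x_0,t_0)\to(x,t)$, the $l=0$ term is precisely the first term on the right-hand side of (\ref{AY4}).

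For $l\ge 1$ one extracts the leftmost block $\sum_{x_0}\int dt_0\,g(x_0,t_0)\,\tilde{\mathbf b}(\phi(x_0-z,r-t_0))$ together with the first kernel $\La\{\na\na^*G(x_1-x_0,\La(t_1-t_0))\}^*$ and its phase, and changes variables $(x_0,t_0)=(x,t)$, $y'=x_1-x_0$, $s=t_1-t_0$: the extracted block becomes the outer wrapper $\sum_x\int dt\,g(x,t)\,\tilde{\mathbf b}(\phi(x-z,r-t))$ of the second term of (\ref{AY4}), while the extracted kernel-plus-phase together with the residual $\sum_{x_1}\int dt_1$ is exactly the operator $\tilde T_{\xi,\eta}$ of (\ref{AT4}) acting in the variable $(x-z,t-r)$. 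The object left under $\tilde T_{\xi,\eta}$ is, for $l=1$, the first summand of $f_k$ in (\ref{AX4}) --- after recognising the trailing $\mathcal P\tilde{\mathbf b}$-string as $\pa_\xi F_{k-1}$ via (\ref{J3}), (\ref{AW4}) --- and, for $l\ge 2$, it is $\tilde{\mathbf b}(\phi(\cdot))$ times the analogous degree-$(k-1)$ structure, so that unfolding (\ref{AX4}) shows the $l$-th term ($1\le l\le k-1$) equals $\sum_x\int dt\,g(x,t)\,\tilde{\mathbf b}(\phi(x-z,r-t))$ times $(\tilde T_{\xi,\eta}\tilde{\mathbf b})^{l-1}\tilde T_{\xi,\eta}$ applied to the first ($\pa_\xi F_{k-l}$-carrying) summand of $f_{k-l+1}$ in (\ref{AX4}); for $l=k-1$ this innermost summand is $f_2$ of (\ref{AV4}), where $\pa_\xi F_1=v$. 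Summing over $l$ and using $\na_\xi u_k=\tilde T_{\xi,\eta}f_k$ together with the recursion (\ref{AX4}) collapses this to $\sum_x\int dt\,g(x,t)\,\tilde{\mathbf b}(\phi(x-z,r-t))\,\na_\xi u_k(\xi,\eta,x-z,t-r,\phi)$, the second term of (\ref{AY4}). The base case $k=2$ has only $l=0,1$ and is the same computation with $\pa_\xi F_1=v$ and $f_2$ as in (\ref{AV4}).

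I expect the main obstacle to be the combinatorial bookkeeping in this last step: keeping straight (i) which $\mathcal P$-projections survive the field differentiation --- none of those to the left of the differentiated factor, which is exactly why (\ref{AE4}) carries un-projected $\tilde{\mathbf b}$'s and is what makes the $l\ge 1$ terms line up with the $\tilde{\mathbf b}$-factors generated by the recursion (\ref{AX4}); (ii) the simultaneous shifts of the space--time arguments produced by $\tau_{z,-r}$ and by the covariance of $G$; and (iii) the phases $e^{-\eta(\cdot)-i(\cdot)\cdot\xi}$ and the powers of $\La$, which must be distributed one per $T_{\xi,\eta}$- or $\tilde T_{\xi,\eta}$-kernel. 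A secondary, routine point is that all the multiple sums and integrals above converge absolutely --- so every interchange of summation and integration is legitimate --- by the Aronson-type Gaussian bound (\ref{I*3}) on $\na\na^*G$, the energy estimates (\ref{AS4})--(\ref{AU4}) for $\na_\xi u$, and the restriction to the region (\ref{AD4}) with $|\Im\xi|\le C_1\sqrt{\Re\eta/\La}$, just as in the estimates (\ref{AD*4}), (\ref{AF4}) used above. Once the identity is established for real $\xi$, analytic continuation in $\xi$ delivers it throughout the region (\ref{AD4}).
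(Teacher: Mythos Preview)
Your proposal is correct and follows essentially the same route as the paper's proof. The paper also separates the $l=0$ contribution (yielding the first term on the right of (\ref{AY4})) from the rest, then packages the $l\ge 1$ contributions via the auxiliary function $H_k(\phi)=[\mathcal{P}T_{\xi,\eta}\mathbf{b}(\cdot)]^{k-1}v$ and its field derivative $dH_k$, and proves by induction on $k$ the identity $\na_\xi u_k(\xi,\eta,x,t,\phi)=\sum_{y}\int_0^\infty e^{-m^2 s/2}G(0,0;y,s,\phi)\,dH_k(y-x,s+t;\tau_{x,-t}\phi)\,ds$; your direct unfolding of the recursion (\ref{AX4}) is the same induction written out explicitly.
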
 
\begin{proof}
We note that the first term on the RHS of (\ref{AY4}) comes from the sum which contains $\del(x_0-y,t_0+s)$ on the RHS of ( \ref{AE4}), in which we make the change of variables $(y,s)\leftrightarrow (x,-t)$. The remaining part of the RHS of (\ref{AE4}) is the same as
\begin{multline} \label{AZ4}
\sum_{x\in \Z^d} \int_{-\infty}^\infty dt \  g(x,t)\tilde{\mathbf{b}}(\phi(x-z,r-t))\\
\sum_{y\in \Z^d} \int_{-r}^\infty ds \ e^{-m^2(s+r)/2}G(z,-r;y,s,\tau_{-z,r}\phi) d H_k(y-x,s+t;\tau_{x-z,r-t}\phi) \ ,
\end{multline}
where $dH_k$ is the field derivative of the function
\be \label{BA4}
H_k(\phi) \ =  \ [\mathcal{P}T_{\xi,\eta}\mathbf{b}(\cdot)]^{k-1}v \ .
\ee
Observe now that
\begin{multline} \label{BB4}
\sum_{y\in \Z^d} \int_{-r}^\infty ds \ e^{-m^2(s+r)/2}G(z,-r;y,s,\tau_{-z,r}\phi) d H_k(y-x,s+t;\tau_{x-z,r-t}\phi) \\ = 
\sum_{y\in \Z^d} \int_{0}^\infty ds \ e^{-m^2s/2}G(0,0;y,s,\phi) d H_k(y-(x-z),s+(t-r);\tau_{x-z,r-t}\phi) \ ,
\end{multline}
so the expression is just a function of $(x-z,t-r,\phi)$.
We show by induction that for $k\ge 2$,
\be \label{BC4}
\na_\xi u_{k}(\xi,\eta,x,t,\phi) \ = \ 
\sum_{y\in \Z^d} \int_{0}^\infty ds \ e^{-m^2s/2}G(0,0;y,s,\phi) d H_k(y-x,s+t;\tau_{x,-t}\phi) \ .
\ee
Hence the identity (\ref{AY4}) follows from (\ref{BB4}). 

To prove (\ref{BC4}) we first show that it holds for $k=2$. To see this we note from (\ref{J3})  that  $dH_2$
is given by the formula 
\be \label{BD4}
dH_2(y,s;\phi) \ = \  \La\left\{\nabla\nabla^* G(y,-\La s)\right\}^*\exp[\eta s-iy.\xi]  D\tilde{{\bf b}}(\phi(y,s))v \ .
\ee
Now (\ref{BC4}) for $k=2$ follows from (\ref{AT4}), (\ref{AV4}), (\ref{BD4}).  To do the induction step we use the identity
\begin{multline} \label{BE4}
dH_{k+1}(y,s;\phi) \ = \ \La\left\{\nabla\nabla^* G(y,-\La s)\right\}^*\exp[\eta s-iy.\xi]  D\tilde{{\bf b}}(\phi(y,s))\pa_\xi F_k(\xi,\eta,\tau_{y,s}\phi) \\
+\La\int_{0}^\infty e^{-\eta r} \ dr\sum_{z\in \Z^d} \left\{\nabla\nabla^* G(z,\La r)\right\}^*\exp[-iz\cdot\xi] \tilde{{\bf b}}(\phi(z,-r)) \ dH_k(y-z,s+r;\tau_{z,-r}\phi) \ .
\end{multline}
From (\ref{BE4} and the induction hypothesis (\ref{BC4}) we conclude that
\begin{multline} \label{BF4}
\sum_{y\in \Z^d} \int_{0}^\infty ds \ e^{-m^2s/2}G(0,0;y,s,\phi) d H_{k+1}(y-x,s+t;\tau_{x,-t}\phi) \ = \\ 
\tilde{T}_{\xi,\eta} [f_{k+1}(\xi,\eta,\cdot,\cdot,\phi)](x,t) \  = \ \na_\xi u_{k+1}(\xi,\eta,x,t,\phi) \ .
\end{multline}
\end{proof}
In order to prove Lemma 3.1 we shall need to use a parabolic version of Meyer's theorem.
\begin{lem}
Let $\mathcal{H}$ be a Hilbert space and for $1<p<\infty$ let $L^p(\Z^d\times\R,\mathcal{H})$ be the space of $p$ integrable functions $f:\Z^d\times\R\ra\mathcal{H}$ where the $L^p$ norm of $f$ is defined by 
\be \label{BJ4}
\|f\|^p_p \ =  \ \sum_{y\in\Z^d}\int_{-\infty}^\infty ds \ \|f(y,s)\|_{\mathcal{H}}^p \ .
\ee
Then there exists a constant $C_1>0$ depending only on $d$ such that if $(\xi,\eta)$ lies in the region (\ref{AD4}) the operator $\tilde{T}_{\xi,\eta}$ of (\ref{AT4}) is bounded on $L^p(\Z^d\times\R,\mathcal{H})$ for $1<p<\infty$. Furthermore there is a constant $C_2>0$ depending only on $d$ such that the norm of $\tilde{T}_{\xi,\eta}$ acting on $L^p(\Z^d\times\R,\mathcal{H})$ satisfies the inequality $\|\tilde{T}_{\xi,\eta}\|_p \ \le \ [1+\del(p)](1+C_2|\Im\xi|^2/[\Re\eta/\La])$ where the function
$\del(\cdot)$ depends only on $d$ and $\lim_{p\ra 2}\del(p)=0$. 
\end{lem}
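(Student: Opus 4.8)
The plan is to view $\tilde{T}_{\xi,\eta}$ of (\ref{AT4}) as a parabolic Calder\'on--Zygmund convolution operator on $\Z^d\times\R$ whose kernel bounds are uniform in $(\xi,\eta)$ in the region (\ref{AD4}), and then to combine the parabolic version of Meyer's theorem (Jones's theorem \cite{j}, \cite{m}, in its Hilbert-space-valued form already used in \cite{cf1}) with the $L^2$ estimates (\ref{AS4}), (\ref{AU4}) and Riesz--Thorin interpolation. Before the singular-integral analysis I would make two reductions. First, a modulation removes $\Im\eta$: writing $e^{-\eta s}=e^{-\Re\eta\,s}e^{-i\Im\eta\,s}$ in (\ref{AT4}) and $e^{-i\Im\eta\,s}f(z+y,r+s)=e^{i\Im\eta\,r}e^{-i\Im\eta\,(r+s)}f(z+y,r+s)$, one gets $\tilde{T}_{\xi,\eta}f=e^{i\Im\eta\,r}\,\tilde{T}_{\xi,\Re\eta}\big(e^{-i\Im\eta\,\cdot}f\big)$; since multiplication by a unimodular function is an isometry of every $L^p(\Z^d\times\R,\mathcal{H})$, this gives $\|\tilde{T}_{\xi,\eta}\|_p=\|\tilde{T}_{\xi,\Re\eta}\|_p$, so we may assume $\eta>0$ is real. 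Second, the kernel of $\tilde{T}_{\xi,\eta}$ is $K_{\xi,\eta}(y,s)=\La e^{-\eta s}\{\na\na^*G(y,\La s)\}^*e^{-iy\cdot\xi}\mathbf 1_{s\ge0}$, which for complex $\xi$ equals $K_{\Re\xi,\eta}(y,s)\,e^{y\cdot\Im\xi}$; using the Gaussian bound (\ref{I*3}) and the elementary estimate $-\gamma t^2/L+at\le-\gamma t^2/(2L)+a^2L/(2\gamma)$ together with $\Re\eta<\La$, one checks that, provided $|\Im\xi|^2<C_1^2\,\Re\eta/\La$ with $C_1$ small depending only on $d$, the factor $e^{y\cdot\Im\xi}$ is absorbed into half the Gaussian exponent at the cost of replacing $e^{-\eta s}$ by $C(d)e^{-\eta s/2}$ --- this is the sole source of the factor $1+C_2|\Im\xi|^2/(\Re\eta/\La)$ in the final bound.

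With these reductions, and discarding the harmless decaying factor $e^{-\eta s/2}$, the operator has kernel dominated by $C(d)\La(\La s+1)^{-(d+2)/2}\exp\!\big[-\tfrac\gamma2\min\{|y|,|y|^2/(\La s+1)\}\big]\mathbf 1_{s\ge0}$, which in the parabolic metric $\rho(y,s)=\max\{|y|,(\La s)^{1/2}\}$ (parabolic dimension $Q=d+2$) has the borderline size $O_d(\rho^{-(d+2)})$; it is genuinely \emph{non}-integrable --- indeed $\|K_{\xi,\eta}\|_{L^1}\sim\log(\La/\Re\eta)$ --- so Young's inequality gives no bound uniform as $\Re\eta\ra0$, and the Calder\'on--Zygmund structure is essential. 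I would then verify the parabolic H\"ormander smoothness condition $\int_{\rho(y,s)\ge2\rho(y',s')}|K_{\xi,\eta}(y-y',s-s')-K_{\xi,\eta}(y,s)|\,dy\,ds\le C(d)$: differences in $y$ are controlled with the extra gradient estimate on $\na(\na\na^*G)$ from (\ref{I*3}) together with $|e^{-i(y-y')\cdot\xi}-e^{-iy\cdot\xi}|\le|\Re\xi|\,|y'|$ (with $\Re\xi$ reduced mod $2\pi\Z^d$, so $|\Re\xi|\le\pi\sqrt d$), while differences in $s$ are controlled with the heat equation $\pa_s(\na\na^*G)=\tfrac\La2\na^*\na(\na\na^*G)$ and $|e^{-\eta(s-s')}-e^{-\eta s}|\le\eta|s'|$, using $\eta<\La$. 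All these estimates are uniform over $(\xi,\eta)$ in (\ref{AD4}).

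By (\ref{AS4}) and (\ref{AU4}), whose energy-estimate proofs go through verbatim for $\mathcal{H}$-valued functions, $\tilde{T}_{\xi,\eta}$ is bounded on $L^2(\Z^d\times\R,\mathcal{H})$ with norm at most $\big(1+C_2|\Im\xi|^2/(\Re\eta/\La)\big)^{1/2}$, which is at most a $d$-dependent constant on (\ref{AD4}). Jones's theorem for parabolic singular integrals \cite{j} (equivalently the parabolic Meyer theorem \cite{m}) then yields $\|\tilde{T}_{\xi,\eta}\|_{L^p\ra L^p}\le A_p$ for every $p\in(1,\infty)$, with $A_p$ depending only on $d$ and $p$; since its proof runs through the parabolic Calder\'on--Zygmund decomposition, which is insensitive to the range space when the kernel is scalar (matrix) valued, it applies to $\mathcal{H}$-valued functions exactly as the classical Calder\'on--Zygmund theorem does \cite{stein} --- the device already used in \cite{cf1}. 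Finally, fixing some $p_1\in(2,\infty)$ and interpolating by Riesz--Thorin between the $L^2$ bound and the $L^{p_1}$ bound (with $p_1$ replaced by its conjugate when $p<2$), one obtains for $p$ in a neighbourhood of $2$, with $1/p=(1-\theta)/2+\theta/p_1$ and hence $\theta=\theta(p)\ra0$ as $p\ra2$, the estimate $\|\tilde{T}_{\xi,\eta}\|_p\le\big(1+C_2|\Im\xi|^2/(\Re\eta/\La)\big)^{(1-\theta)/2}A_{p_1}^{\theta}\le[1+\del(p)]\big(1+C_2|\Im\xi|^2/(\Re\eta/\La)\big)$, where $\del(p):=A_{p_1}^{\theta(p)}-1$ in this neighbourhood and $\del(p):=A_p$ otherwise; $\del$ depends only on $d$ and $\lim_{p\ra2}\del(p)=0$. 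One takes $C_1$ to be the smaller of the constant from the absorption step and the constant of Lemma 2.1 of \cite{cf1}, both depending only on $d$.

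\textbf{Main obstacle.} The two reductions in the first paragraph are the conceptual core: the modulation eliminating $\Im\eta$, and the Gaussian absorption converting the complex-$\xi$ case to the real-$\xi$ case while producing exactly the factor $1+C_2|\Im\xi|^2/(\Re\eta/\La)$. After that, the laborious but routine part is the parabolic H\"ormander/H\"older verification carried out \emph{uniformly in} $\Re\eta>0$ --- so that the borderline, non-integrable size bound is genuinely used rather than Young's inequality --- together with checking that Jones's parabolic theorem transfers to Hilbert-space-valued functions; the concluding interpolation is entirely standard.
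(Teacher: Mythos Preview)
Your approach is correct and is essentially the one the paper intends: the paper's proof simply cites Lemma 5.2 and Corollary 5.1 of \cite{cf1}, which carry out precisely the parabolic Calder\'on--Zygmund/Jones argument for Hilbert-space-valued functions combined with the $L^2$ estimate (\ref{AU4}) and interpolation that you have spelled out. One small narrative slip: the factor $1+C_2|\Im\xi|^2/(\Re\eta/\La)$ enters through the $L^2$ bound (\ref{AU4}) that you interpolate against, not through the kernel absorption (which, on the region (\ref{AD4}), only contributes a constant depending on $d$); your final interpolation step uses this correctly even though the earlier remark attributes it to the absorption.
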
 
\begin{proof}
This follows from the argument of Lemma 5.2 and Corollary 5.1 of \cite{cf1}. 
\end{proof}
\begin{proof}[Proof of Lemma 3.1]
We assume $g:\Z^d\times\R\ra\C^d\otimes\C^d$ is continuous of compact support. Then from Lemma 4.2 and Lemma 4.3 we have that
\begin{multline} \label{BG4}
\left\langle \ \left|T_{k,\Im\xi,\Re\eta} \ g(\Re\xi,\Im\eta,\cdot) v\right|^2 \   \right\rangle_{\Om_m} \ \le \ 2\sum_{z\in\Z^d}\int_{-\infty}^\infty dr  \\
 \left\langle  \ \left|\sum_{x\in\Z^{d}}\int_{-\infty}^\infty  dt \  g(x,t) \ e^{m^2(t-r)/2}G(0,0;x-z,r-t,\phi)D\tilde{\mathbf{b}}(\phi(x-z,r-t))\pa_\xi F_k(\xi,\eta,\tau_{x-z,r-t}\phi)\right|^2 \  \right\rangle_{\Om_m}  \\ +
 \left\langle  \ \left|\sum_{x\in \Z^d} \int_{-\infty}^\infty dt \  g(x,t)\tilde{\mathbf{b}}(\phi(x-z,r-t))\mathcal{P}\na_\xi u_{k}(\xi,\eta,x-z,t-r,\phi)\right|^2  \ \right\rangle_{\Om_m} \ .
\end{multline}
Let $(\Om,\mathcal{F},P)$ be the massless field stationary process corresponding to the limit as $m\ra 0$ of the massive field stationary processes defined by (\ref{O1}), (\ref{P1}), and denote expectation with respect to this measure by $\langle\cdot\rangle_\Om$.  We have from \cite{fs} (see also \cite{c1}) that if $d\ge 3$ then for any $N\ge 1$, continuous bounded function $f:\R^{N}\ra\C$, and $(x_1,t_1),...,(x_N,t_N)\in \Z^d\times\R$,
\be \label{BH4}
\lim_{m\ra 0} \left\langle \ f\big( \ \phi(x_1,t_1),....,\phi(x_N,t_N) \ \big) \ \right\rangle_{\Om_{m}} \ = \ \left\langle \ f\big( \ \phi(x_1,t_1),....,\phi(x_N,t_N) \ \big) \ \right\rangle_{\Om} \ .
\ee
Arguing as in Lemma 4.2  we conclude that if $d\ge 3$ then 
\be \label{BI4}
\lim_{m\ra 0} \left\langle \ \left|T_{k,\Im\xi,\Re\eta} \ g(\Re\xi,\Im\eta,\cdot) v\right|^2 \   \right\rangle_{\Om_m}  \ = \ \left\langle \ \left|T_{k,\Im\xi,\Re\eta} \ g(\Re\xi,\Im\eta,\cdot) v\right|^2 \   \right\rangle_{\Om}  \ .
\ee
Hence to prove Lemma 3.1 it will be sufficient to obtain an upper bound on the RHS of (\ref{BG4}) which is independent of $m$ as $m\ra 0$. 

For  $\xi\in\R^d,\eta\in\C$ with $\Re\eta>0$ the functions $F_k(\xi,\eta,\phi), \ k=2,3,..., \phi\in\Om,$ defined by (\ref{AW4})  satisfy the recurrence equations
\begin{multline} \label{BK4}
[\eta+\pa]F_2(\xi,\eta,\phi)+\La\pa _\xi^*\pa_\xi  F_2(\xi,\eta,\phi) \ = \ \La \mathcal{P}\pa^*_\xi[\tilde{\mathbf{b}}(\phi(0,0))v] \ , \\
[\eta+\pa]F_k(\xi,\eta,\phi)+\La\pa _\xi^*\pa_\xi  F_k(\xi,\eta,\phi) \ = \ \La\mathcal{P}\pa^*_\xi[\tilde{\mathbf{b}}(\phi(0,0))\pa_\xi F_{k-1}(\xi,\eta,\phi)]   \ {\rm if   \ } k>2.
\end{multline}
 Then as in (\ref{B3}) we see that  $F_k(\xi,\eta,\cdot)\in L^2(\Om)$ and
\be \label{BL4}
\frac{\Re\eta}{\La}\|F_k(\xi,\eta,\cdot)\|^2_{L^2(\Om)}+\|\pa_\xi F_k(\xi,\eta,\cdot)\|^2_{\mathcal{H}(\Om)} \ \le \  (1-\la/\La)^{2(k-1)}|v|^2 \ .
\ee
The $F_k(\xi,\eta,\phi)$  for $\xi\in\C^d$ are defined by analytic continuation from the values of $F_k(\xi,\eta,\phi)$ when $\xi\in\R^d$.  From Lemma 2.1 of \cite{cf1} we see that $F_k$ regarded as a function $(\xi,\eta)\ra L^2(\Om)$ has an analytic continuation to the region (\ref{AD4})  where $C_1$ is a constant depending only on $d$. For $(\xi,\eta)$ in this region there is a constant $C_2$ depending only on $d$ such that 
\be \label{BM4}
\frac{\Re\eta}{2\La}\|F_k(\xi,\eta,\cdot)\|^2_{L^2(\Om)}+\|\pa_\xi F_k(\xi,\eta,\cdot)\|^2_{\mathcal{H}(\Om)} \ \le \  (1-\la/\La)^{2(k-1)}\big[1+C_2|\Im\xi|^2\big/(\Re\eta/\La)\ \big]^{2(k-1)}|v|^2 \ .
\ee

We take $\mathcal{H}$ to be the Hilbert space $\mathcal{H}=L^2(\Om,\C^d)$ and for $k=2,3,..,$ let $h_k:\Z^d\times\R\ra\mathcal{H}$ be the function 
\be \label{BN4}
h_k(z,r,\phi) \ = \  D\tilde{{\bf b}}(\phi(z,-r))\pa_\xi F_{k}(\xi,\eta,\tau_{z,-r}\phi) \   e^{m^2r/2}G(0,0;z,-r,\phi) \ .
\ee
Then the first term on the RHS of (\ref{BG4}) is the square of the norm in $L^2(\Z^d\times\R,\mathcal{H})$ of the convolution $g*h_{k-1}$. It follows from (\ref{U1}) and (\ref{BM4}) that if $q>1+2/d$ then
\be \label{BO4}
\|h_k\|_{L^q(\Z^d\times\R,\mathcal{H})} \ \le \ C_{q}\La^{-1/q}\| D\tilde{{\bf b}}(\cdot)\|_\infty(1-\la/\La)^{(k-1)}\big[1+C_2|\Im\xi|^2\big/(\Re\eta/\La)\ \big]^{(k-1)}|v| \ \  ,
\ee
where the constant $C_q$ depends only on $d,q,\la/\La$.  Taking $q<2$ we have by Young's inequality for convolutions that if for $p=2q/(3q-2)$ the function $g$ is in  $L^p(\Z^d\times\R,\C^d\otimes\C^d)$ with  norm (\ref{L3}) then $g*h_k$ is in 
$L^2(\Z^d\times\R,\mathcal{H})$  and
\be \label{BP4}
\|g*h_k\|_{L^2(\Z^d\times\R,\mathcal{H})} \ \le \ C\|g\|_{L^p(\Z^d\times\R,\C^d\otimes\C^d)}  \ \|h_k\|_{L^q(\Z^d\times\R,\mathcal{H})}  \ ,
\ee
for a constant $C$ depending only on $d$.

To bound the second term on the RHS of (\ref{BG4}) we show that the function $(z,r)\ra \na_\xi u_{k}(\xi,\eta,z,r,\phi)$ is in $L^q(\Z^d\times\R,\mathcal{H})$. In fact from (\ref{U1}), (\ref{AV4}) we have that 
\be \label{BQ4}
\|f_{2}(\xi,\eta,\cdot,\cdot,\cdot)\|_{L^q(\Z^d\times\R,\mathcal{H})} \ \le \ 
C_{q}\La^{-1/q}\| D\tilde{{\bf b}}(\cdot)\|_\infty |v|  \ ,
\ee
for a constant $C_q$ depending only on $q,d$. 
Hence  Lemma 4.4 implies that
\be \label{BR4}
\|\na_\xi u_{2}(\xi,\eta,\cdot,\cdot,\cdot)\|_{L^q(\Z^d\times\R,\mathcal{H})} \ \le \ 
C_{q}\La^{-1/q}\| D\tilde{{\bf b}}(\cdot)\|_\infty [1+\del(q)]\big[1+C_2|\Im\xi|^2\big/(\Re\eta/\La)\ \big]|v|  \ .
\ee
  From (\ref{AX4}) we similarly have that for $k\ge 3$,
\begin{multline} \label{BS4}
\|f_{k}(\xi,\eta,\cdot,\cdot,\cdot)\|_{L^q(\Z^d\times\R,\mathcal{H})} \ \le \ 
C_{q}\La^{-1/q}\| D\tilde{{\bf b}}(\cdot)\|_\infty (1-\la/\La)^{(k-2)}\big[1+C_2|\Im\xi|^2\big/(\Re\eta/\La)\ \big]^{(k-2)}|v|   \\
+ (1-\la/\La) \|\na_\xi u_{k-1}(\xi,\eta,\cdot,\cdot,\cdot)\|_{L^q(\Z^d\times\R,\mathcal{H})} \ ,
\end{multline} 
where $C_q$ depends only on $q,d$. Hence using Lemma 4.4 we have by induction  from (\ref{BQ4})-(\ref{BS4}) that for $k\ge 2$, 
\begin{multline} \label{BT4}
\|f_{k}(\xi,\eta,\cdot,\cdot,\cdot)\|_{L^q(\Z^d\times\R,\mathcal{H})} \ \le \\ 
C_{q}k[1+\del(q)]^{k-2}\La^{-1/q}\| D\tilde{{\bf b}}(\cdot)\|_\infty (1-\la/\La)^{(k-2)}\big[1+C_2|\Im\xi|^2\big/(\Re\eta/\La)\ \big]^{(k-2)}|v|  \  .
\end{multline}
The inequality (\ref{N3}) follows now from (\ref{BT4}) and Lemma 4.4 by taking $q$ sufficiently close to $2$ so that  $1+\del(q)<(1-\la/\La)^{-1/2}$. 
\end{proof}

\vspace{.1in}

\section{Proof of Theorem 1.3}
We use an identity relating correlation functions for the massive Euclidean field theory measure (\ref{P1}) and expectations of Green's functions for parabolic PDE with random coefficients. Let $(\Om,\mathcal{F},P_m)$ be the environment of massive fields $\phi:\Z^d\times\R\ra\R$ as in Lemma 4.2 and  $\mathbf{a}(\cdot)$ in (\ref{A1}) be the function $\mathbf{a}(\phi) \ = \ V''(\na\phi(0,0)), \ \phi\in\Om$.  Then the two point correlation function for the invariant measure (\ref{P1}) is related to the expectation of the Green's function for the PDE (\ref{A1}) by
\be \label{A5}
\langle \ \phi(x)\phi(0)  \ \rangle \ = \  \int_0^\infty e^{-m^2 t} G_{\mathbf{a}}(x,t) \ dt \ .
\ee
The identity (\ref{A5}) is implicit in the work of Naddaf and Spencer \cite{ns1} but was first rigorously proven in \cite{gos} (see also \cite{c1}). 

To prove Theorem 1.3 we observe that the methods used to prove Theorem 1.2 for diffusion matrices $\mathbf{a}(\cdot)$ of the form $\mathbf{a}(\phi)=\tilde{\mathbf{a}}(\phi(0,0))$ can also be applied for diffusion matrices of the form   $\mathbf{a}(\phi)=\tilde{\mathbf{a}}(\na\phi(0,0))$, where
$\tilde{\mathbf{a}}:\R^d\ra\R^{d/(d+1)/2}$ is $C^1$, satisfies (\ref{D1}), and $\|D\tilde{\mathbf{a}}(\cdot)\|_\infty<\infty$. Instead of the bound (\ref{U1}) we use the inequality
\be \label{B5}
 |\na_x G(z,s;x,t,\phi)| \ \le \ \frac{C}{[\La(t-s)+1]^{(d+\beta)/2}} \exp\left[-\frac{ |x-z|}{\sqrt{\La (t-s)+1}} \ \right] \ .
\ee
Evidently (\ref{B5}) with $\beta=0$ is a consequence of (\ref{U1}).  This is sufficient to prove Theorem 1.3 when $d\ge 3$. To prove the theorem for $d=2$ we need to use the fact that $\beta$ can be chosen strictly positive depending only on $\la/\La$. This follows from the Harnack inequality \cite {dd}. Furthermore $\beta$ can be chosen arbitrarily close to $1$ provided   $\la/\La$ is sufficiently close to $1$.   The result follows by letting $m\ra 0$ in (\ref{A5}).

\appendix

\section{Diffusion Processes with Convex Potential}
Let $W:\R^k\ra\R$ be a $C^2$ uniformly convex function such that $W''(\cdot)$ satisfies the quadratic form inequality $\la I_k\le W''(\cdot)\le\La I_k$ for some constants $\la,\La>0$. We consider the diffusion process $\phi:\R^+\ra \R^k$ which is the solution to the SDE initial value problem
\be \label{A6}
d\phi(t) \ = \ -\frac{1}{2}\na W(\phi(t)) \ dt+ dB(t) \ , \quad t>0, \quad \phi(0)=0,
\ee
where $B(\cdot)$ is $k$ dimensional Brownian motion. The invariant measure for the SDE (\ref{A6}) is given by
\be \label{B6}
\exp[- W(\phi)] \ d\phi/{\rm normalization} \ , \quad \phi\in\R^k \ .
\ee
We denote the probability space for the stationary process of functions $\phi:\R\ra\R^k$ associated with the SDE (\ref{A6}) and invariant measure (\ref{B6}) by $(\Om,\mathcal{F},P)$, and  expectation  with respect to $(\Om,\mathcal{F},P)$ by $\langle\cdot\rangle_\Om$. 
For $T>0$ let $\phi_T:[-T,\infty)\ra\R^k$ be defined by $\phi_T(t)=\phi(T+t)$, where $\phi(\cdot)$ is the solution to (\ref{A6}). The stationary process measure can be obtained by taking the $T\ra\infty$ limit of $\phi_T$ as follows:
\begin{lem}
Let $f:\R^{Nk}\ra\R$ be a continuous bounded function. Then 
\be \label{C6}
\lim_{T\ra\infty} \langle \ f\big(\phi_T(t_1),..,\phi_T(t_N) \big)\ \rangle \ = \ 
 \langle \ f\big(\phi(t_1),..,\phi(t_N) \big)\ \rangle_\Om \ .
\ee
\end{lem}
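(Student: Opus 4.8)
The plan is to combine the Markov property of the diffusion (\ref{A6}) with its exponential relaxation to the equilibrium (\ref{B6}). Since $W$ is $C^2$ with $W''(\cdot)\le\La I_k$, the drift $-\frac{1}{2}\na W$ is globally Lipschitz, so (\ref{A6}) has a unique strong solution, the associated Markov semigroup is Feller, and for any two solutions $\psi^{y},\psi^{z}$ of (\ref{A6}) started at $y,z\in\R^k$ and driven by \emph{the same} Brownian motion the difference solves an ODE, with $\frac{d}{dt}|\psi^{y}(t)-\psi^{z}(t)|^2=-(\psi^{y}(t)-\psi^{z}(t))\cdot(\na W(\psi^{y}(t))-\na W(\psi^{z}(t)))\le-\la|\psi^{y}(t)-\psi^{z}(t)|^2$, whence $|\psi^{y}(t)-\psi^{z}(t)|\le e^{-\la t/2}|y-z|$ for all $t\ge0$. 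Moreover, uniform convexity of $W$ forces the measure (\ref{B6}) to have a Gaussian tail, hence finite first moment; I would record these two structural facts at the outset.

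Next I would fix $(t_1,\dots,t_N)$ and relabel so that $t_1\le\dots\le t_N$; for $T$ large enough the shifted times $s_i:=T+t_i$ are all nonnegative, with $s_1$ the smallest. Because $\phi_T(t_i)=\phi(s_i)$ and, by the Markov property, the conditional law of $(\phi(s_1+u))_{u\ge0}$ given $\phi(s_1)=y$ is that of the solution $\psi^{y}$ of (\ref{A6}) from $y$, one obtains
\[
\langle\,f\big(\phi_T(t_1),\dots,\phi_T(t_N)\big)\,\rangle \;=\; \mathbf{E}\big[\,g(\phi(s_1))\,\big],\qquad
g(y):=\mathbf{E}\big[\,f\big(y,\psi^{y}(t_2-t_1),\dots,\psi^{y}(t_N-t_1)\big)\,\big].
\]
The key points are that $g$ does not depend on $T$ (only the gaps $t_i-t_1$ enter), that $|g|\le\|f\|_\infty$, and that $g$ is continuous: for a fixed Brownian path the map $y\mapsto\psi^{y}(u)$ is Lipschitz by Gronwall's inequality applied to the Lipschitz drift, and continuity of $g$ then follows from the bounded convergence theorem together with continuity of $f$.

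Finally I would let $T\to\infty$. Let $\bar\phi$ denote the stationary process, so $\bar\phi$ solves (\ref{A6}) and $\bar\phi(s)$ has distribution (\ref{B6}) for every $s\in\R$; coupling $\phi$ and $\bar\phi$ through a common Brownian motion, the contraction of the first paragraph gives $\mathbf{E}\,|\phi(s_1)-\bar\phi(s_1)|\le e^{-\la s_1/2}\,\mathbf{E}\,|\bar\phi(0)|\to0$ as $T\to\infty$. Hence the law of $\phi(s_1)$ converges to (\ref{B6}) in Wasserstein-$1$ distance, in particular weakly, so by boundedness and continuity of $g$ we get $\mathbf{E}[g(\phi(s_1))]\to\int g\,d\mu$, where $\mu$ is the measure (\ref{B6}). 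Applying the very same Markov-property computation to the stationary process yields $\langle\,f(\phi(t_1),\dots,\phi(t_N))\,\rangle_\Om=\mathbf{E}[g(\bar\phi(t_1))]=\int g\,d\mu$, and combining the two identities gives (\ref{C6}). I do not expect a serious analytic obstacle here: the load-bearing inputs are the synchronous-coupling contraction and the finite first moment of (\ref{B6}), both immediate from uniform convexity of $W$; the only thing requiring genuine care is the bookkeeping in the Markov-property identity — that $\phi_T$ is a pure time-shift and that the relevant conditional law is exactly the semigroup of (\ref{A6}).
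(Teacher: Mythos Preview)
Your argument is correct, but it follows a genuinely different route from the paper's. The paper works analytically: it passes to the diffusion equation, writes the expectation as an $L^2_W(\R^k)$ inner product $[f_1,e^{-H(T+t_1-\del)/2}g]_W$ with $H=-\Del+\na W\cdot\na$, and then invokes the Brascamp--Lieb inequality to obtain a spectral gap $\ge\la$ for $H$ on the orthogonal complement of constants, so that $e^{-HT/2}g\to\langle g\rangle_W$ in $L^2_W$. You instead exploit the same uniform convexity \emph{pathwise}, via the synchronous-coupling contraction $|\psi^y(t)-\psi^z(t)|\le e^{-\la t/2}|y-z|$, and upgrade this to Wasserstein-$1$ (hence weak) convergence of the time-$s_1$ marginal; the Markov property then reduces the $N$-point statement to a one-point statement for the continuous bounded functional $g$. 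Your approach is more elementary in that it avoids both the spectral theory and Brascamp--Lieb, needing only Gronwall and the finite first moment of the invariant measure. The paper's approach, on the other hand, produces the explicit Green's-function identity (\ref{K6}) as a by-product, which is exactly the object used immediately afterwards to derive the path-space representation (\ref{W6})--(\ref{Y6}); your coupling argument gives the limit but not that formula.
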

\begin{proof}
The diffusion equation corresponding to the SDE (\ref{A6}) is given by
\be \label{D6}
\frac{\pa u(\phi,t)}{\pa t} \ = \  -\frac{1}{2}\na W(\phi)\cdot\na u(\phi,t)+ \frac{1}{2}\Del u(\phi,t) \ , \quad t>0.
\ee 
The solution to (\ref{D6}) with initial data
\be \label{E6}
u(\phi,0) \ = \ u_0(\phi), \quad \phi\in\R^k,
\ee
can be written in terms of the Green's function $G:\R^k\times\R^k\times\R^+\ra\R$ as
\be \label{F6}
u(\phi,t) \ = \ \int_{\R^k} G(\phi,\phi',t) u_0(\phi') \ d\phi' \ , \quad \phi\in\R^k,t>0.
\ee
Now it is clear that for a continuous bounded function $f:\R^k\ra\R$, 
\be \label{G6}
\langle \ f(\phi_T(t_1)) \ \rangle \ =  \ \int_{\R^k}G(0,\phi',T+t_1)f(\phi') \ d\phi' \ .
\ee
Let $\langle\cdot\rangle_W$ denote expectation with respect to the invariant measure (\ref{B6}) and $L^2_W(\R^k)$ the corresponding space of square integrable functions $g:\R^k\ra\R$ with respect to  $\langle\cdot\rangle_W$. Letting $[\cdot,\cdot]_W$ denote the inner product on $L^2_W(\R^k)$ we see from (\ref{G6}) that for any $\del$ satisfying $0<\del<T+t_1$, 
\begin{multline} \label{H6}
\langle \ f(\phi_T(t_1)) \ \rangle \ =  \ [f_1,f_2]_W \quad {\rm  where \ } f_1(\phi)=G(0,\phi,\del) \exp[W(\phi)] \int_{\R^k} \exp[- W(\phi')] \ d\phi' \ \\
{\rm and \ }  f_2(\phi) \ = \  \int_{\R^k}G(\phi,\phi',T+t_1-\del)f(\phi') \ d\phi' \ .
\end{multline}
Since $\|f_2\|_\infty\le \|f\|_\infty$ it follows that $f_2\in L^2_W(\R^k)$.  We can also easily see that for $\del>0$ sufficiently small the function $f_1$ is in $L^2_W(\R^k)$.  Now we use the fact that the operator $H=-\Del+\na W(\phi)\cdot\na$ is self adjoint non-negative definitive on $L^2_W(\R^k)$ and the constant is an eigenfunction of $H$ with eigenvalue $0$.  From the Brascamp-Lieb inequality \cite{bl} the operator $H$ acting on the subspace of $L^2_W(\R^k)$ of functions orthogonal to the constant is bounded below by $\la>0$.  Furthermore from (\ref{H6}) we have that
\be \label{I6}
\langle \ f(\phi_T(t_1)) \ \rangle \ =  \ [f_1,e^{-H(T+t_1-\del)/2}f]_W  \ ,
\ee
whence we conclude that
\be \label{J6}
\lim_{T\ra\infty} \langle \ f(\phi_T(t_1)) \ \rangle \ =  \  \langle \ f_1(\phi) \ \rangle_W \langle \ f(\phi) \ \rangle_W \ = \ \langle \ f(\phi) \ \rangle_W \ .
\ee
We have proven (\ref{C6}) when $N=1$. 
The identity (\ref{C6})  for $N>1$ can be proven similarly. Assuming $0<t_1< t_2<\cdots< t_N$,  we have that
\begin{multline} \label{K6}
\langle \ f\big(\phi_T(t_1),..,\phi_T(t_N) \big)\ \rangle \ = \ [f_1,e^{-H(T+t_1-\del)/2}g]_W \quad {\rm where}  \\
g(\phi_1) \ =  \ \int_{\R^{(N-1)k}} G(\phi_1,\phi_2,t_2-t_1)\cdots G(\phi_{N-1},\phi_N,t_N-t_{N-1}) f(\phi_1,...,\phi_N) \ d\phi_2\cdots d\phi_N  \ .
\end{multline}
Letting $T\ra\infty$ in (\ref{K6}) we see as before that(\ref{C6}) holds. 
\end{proof}
Next we wish to obtain a representation of the measure for the probability space $(\Om,\mathcal{F},P)$ for the stationary process associated with the SDE (\ref{A6}) and invariant measure (\ref{B6}).  First we consider the Gaussian case  so there is a symmetric $k\times k$ matrix and $k$ dimensional vector $b$ with
\be \label{L6}
W(\phi) \ = \ \frac{1}{2}\phi^*A\phi -b^*\phi  \ ,\quad {\rm where \ }   \la I_k\le A\le \La I_k \ .
\ee
The SDE (\ref{A6}) is explicitly solvable when $W(\cdot)$ is given by (\ref{L6}) with solution
\be \label{M6}
\phi(t) \ = \ \int_0^t e^{-A(t-s)/2}  [b/2  \ ds+dB(s)] \ , \quad t\ge 0.
\ee 
It is well known that the measure for the stationary process is Gaussian . We can use Lemma A1 and (\ref{M6}) to find formulas for the mean and covariance of $\phi(\cdot)$. Thus we have that
\be \label{N6}
\langle \ \phi(t) \ \rangle_\Om \ = \  A^{-1} b, \quad {\rm cov}_\Om[\phi(t_1),\phi(t_2)^*] \ = \ \Gamma(t_2-t_1) \ = \   A^{-1}e^{-A|t_1-t_2|/2} \ .
\ee
The Fourier transform (\ref{K1}) of the covariance is therefore given by $\hat{\Gamma}(\theta)=[\theta^2+A^2/4]^{-1}$.  Hence the Gaussian measure corresponding to the covariance is  formally given by the expression
\be \label{O6}
\exp\left[ \ -\frac{1}{2}\int_{-\infty}^\infty \left|\frac{d\phi(t)}{dt}\right|^2+\frac{1}{4}|A\phi(t)-b|^2 \ dt \ \right] \prod_{t\in\R} d\phi(t)\ \bigg/ {\rm normalization} \ .
\ee
Evidently the measure (\ref{O6}) is log concave.

We can obtain a representation of the stationary process measure similar to (\ref{O6}) for general $C^2$ uniformly convex functions $W(\cdot)$. To see this we write the solution of (\ref{D6}), (\ref{E6}) using the Cameron-Martin formula \cite{ks} as
\be \label{P6}
u(\phi,t) \ = \ E\left[ \exp\left\{  - \frac{1}{2}\int_0^t \na W(B(s))\cdot dB(s)-  \frac{1}{8}\int_0^t |\na W(B(s))|^2 \ ds \ \right\} u_0(B(t))  \ \Big| \ B(0)=\phi  \right] \ ,
\ee
where $B(\cdot)$ is $k$ dimensional Brownian motion. We rewrite (\ref{P6}) using Ito's formula
\be \label{Q6}
W(B(t))-W(B(0)) \ = \ \int_0^t \na W(B(s))\cdot dB(s) +\frac{1}{2} \int_0^t \Del W(B(s))\ ds  \ . 
\ee
From  (\ref{P6}), (\ref{Q6}) we see that
\be \label{R6}
u(\phi,t) \ = \ e^{W(\phi)/2}E\left[ \exp\left\{   -  \frac{1}{2}\int_0^t -\frac{1}{2}\Del W(B(s))+\frac{1}{4} |\na W(B(s))|^2 \ ds \ \right\} e^{-W(B(t))/2}u_0(B(t))  \ \Big| \ B(0)=\phi  \right] \ .
\ee
The identity (\ref{R6}) can be alternatively obtained using  the Feynman-Kac representation \cite{ks}  for the solution to the PDE 
\be \label{S6}
\frac{\pa v(\phi,t)}{\pa t} \ = \  V(\phi) v(\phi,t)+ \frac{1}{2}\Del v(\phi,t) \ , \quad t>0,
\ee 
 with initial data
\be \label{T6}
v(\phi,0) \ = \ v_0(\phi), \quad \phi\in\R^k.
\ee
Thus we have from the Feynman-Kac formula  that
\be \label{U6}
v(\phi,t) \ = \ E\left[ \exp\left\{  \int_0^t V(B(s)) \ ds \ \right\} v_0(B(t))  \ \Big| \ B(0)=\phi  \right] \ .
\ee
The formula (\ref{R6}) follows now from (\ref{U6}) using the fact that if $u(\phi,t)$ is the solution to (\ref{D6}), (\ref{E6}) then the function $v(\phi,t)=\exp[-W(\phi)/2]u(\phi,t)$ is the solution to (\ref{S6}), (\ref{T6}) with
\be \label{V6}
V(\phi) \ = \ \frac{1}{4}\Del W(\phi) -\frac{1}{8}|\na W(\phi)|^2 \ , \quad v_0(\phi) \ = \ \exp[-W(\phi)/2]u_0(\phi) \ .
\ee

To obtain the representation for the measure of the stationary process we use Lemma A1. Thus from (\ref{R6}) the LHS of (\ref{C6}) is given by the formula
\begin{multline} \label{W6}
 \langle \ f\big(\phi_T(t_1),..,\phi_T(t_N) \big)\ \rangle \ = \\
 \ e^{W(0)/2}E\Bigg[ \exp\left\{   -  \frac{1}{2}\int_0^{T+t_N+T'} -\frac{1}{2}\Del W(B(s))+\frac{1}{4} |\na W(B(s))|^2 \ ds \ \right\} \\
  e^{-W(B(T+t_N+T'))/2}f(B(T+t_1), B(T+t_2),..,B(T+t_N))  \ \Big| \ B(0)=0  \Bigg] \ ,
\end{multline}
for any $T'\ge 0$.  Recall now that the Brownian motion measure $B(s), \ s\ge0,$ has the representation
\be \label{X6}
\exp\left[ \ -\frac{1}{2}\int_{0}^\infty \left|\frac{d\phi(s)}{ds}\right|^2 ds \ \right] \prod_{s\in\R^+} d\phi(s)\ \bigg/ {\rm normalization} \  \quad {\rm with \  } \phi(0)=0 \ .
\ee
Hence on letting $T,T'\ra\infty$ in (\ref{W6}) and using (\ref{X6}) we see that limiting measure defined by  (\ref{C6}) with expectation $\langle\cdot\rangle_\Om$ has the representation 
\be \label{Y6}
\exp\left[ \ -\frac{1}{2}\int_{-\infty}^\infty \left|\frac{d\phi(t)}{dt}\right|^2-\frac{1}{2}\Del W(\phi(t))+\frac{1}{4} |\na W(\phi(t))|^2 \ \ dt \ \right] \prod_{t\in\R} d\phi(t)\ \bigg/ {\rm normalization} \ .
\ee
In the Gaussian case (\ref{L6}) the representation (\ref{Y6}) is equivalent to (\ref{O6}) since $\Del W(\cdot)$ is constant and is therefore  part of the normalization constant. The measure (\ref{Y6}) is log concave when $W(\cdot)$ is quadratic, but it is easy to see that even if $W(\cdot)$ is a  small perturbation of a quadratic the measure is no longer log concave.

 \thanks{ {\bf Acknowledgement:}  The authors would like to thank Tom Hurd and Tom Spencer for helpful conversations.  }

\end{document}